\title{Normalized solutions for Schr\"odinger-Bopp-Podolsky systems in bounded domains}
\author{Gaetano Siciliano}
\abstract{%
    We consider an elliptic system of Schr\"odinger-Bopp-Podolsky type  in a bounded and smooth  domain of $\mathbb R^3$  with a non-constant coupling factor. This type of system has been introduced in the mathematical literature in \cite{AveniaSiciliano2019} and in the last years many contributions appeared. In particular here we present the results in \cite{AS} and \cite{LG} which show existence of solutions, under different boundary conditions on the electrostatic potential. The problem has indeed a variational formulation and by means of the Critical Point Theory  and the Lusternik-Schnirelmann theory, we prove that the associated energy functional has infinitely many critical points, which then give solutions of the problem.
}
\keywords{Schr\"odinger-Bopp-Podolsky system, Krasnoselskii genus, Lagrange multipliers, weak solutions.}
\begin{document}

\section{Introduction}

This paper is a survey presenting results obtained in collaboration with the Brazilian mathematicians D. G. Afonso and L. Soriano Hernández (see \cite{AS} and \cite{LG}) on the existence and multiplicity of solutions of a nonlinear elliptic system arising in the physical sciences. More specifically, we consider a system arising from the coupling, in the context of the Abelian Gauge Theories, of the electromagnetic field in the formulation given by Bopp and Podolsky with the Schr\"odinger equation, whose main reference is \cite{AveniaSiciliano2019} (see also the references therein). For classical references about Quantum Mechanics, charged particles and Gauge theory, we refer the interested reader to the classical books \cite{F,N}.

The first paper about this kind of coupling dates back to 1998 where in \cite{BenciFortunato1998} the authors consider the Schr\"odinger equation and the electromagnetic field in the classical formulation of Maxwell. This line of investigation has attracted then many researchers worldwide and in the last decades many interesting papers appeared on this matter, involving also quasilinear and fractional equations. We  also made contributions with some Brazilian mathematicians and the interested reader is referred to
\cite{BS1,BS2, CT,GaeLilPie, EdwinG, EdwinG2,FS2,FS3, FMS,LPT,SS2} and the references therein.

In this paper, we focus on the electromagnetic theory developed by Bopp and Podolsky. Let us  start by presenting briefly some of the reasons which led Bopp and Podolsky to consider a new electromagnetic theory of second order, which, in many cases, is preferable to the classical Maxwell one.

The Bopp-Podolsky theory was developed by Bopp and Podolsky independently (see \cite{B40,Pob42}) to overcome the so called {\em infinity problem}  of the Maxwell theory, which consists in the following. It is known,  by the  Gauss law in Physics (or Poisson equation in Mathematics), that  the electrostatic potential $\phi$ generated by a charge distribution with  density $\rho$ satisfies the equation
\begin{equation}\label{Gauss}
	-\Delta\phi=\rho
	\qquad \text{in }\mathbb R^3.
\end{equation}
In particular if $\rho=\delta_{x_0}$, the Dirac delta function centered in  $x_0\in\mathbb R^3$, the solution  is $\mathcal{G}(x-x_0)$, where
\[
    \mathcal{G}(x)=\frac{1}{4\pi |x|},
\]
whose electrostatic energy is
\[
    \mathcal{E}_{\rm M}(\mathcal{G})
    =\frac{1}{2}\int_{\mathbb R^3} |\nabla \mathcal{G}|^2 =+\infty.
\]
In the Bopp-Podolsky theory the Poisson equation \eqref{Gauss} for the electrostatic potential  is replaced by
\begin{equation}\label{GaussBP}
    -\Delta \phi + a^2 \Delta^2 \phi = \rho
    \qquad
    \hbox{in }\mathbb R^3
\end{equation}
where $a\neq0$ is a parameter and $\Delta^{2}$ is the bi-Laplace operator, namely $\Delta^{2}\phi := \Delta(\Delta \phi)$. In case of a charge particle $\rho=\delta_{x_0}$, the solution which is physically admissible is
\[
    \mathcal{K}(x)
    :=\frac{1-e^{-|x|/a}}{4\pi |x|}, \quad a>0.
\]
Note that it is not singular in $x_0$ since
\[
    \lim_{x\to x_0} \mathcal{K}(x-x_0)=\frac{1}{4\pi a},
\]
and its energy is finite, since, as one easily computes it holds
\[
    \mathcal{E}_{\rm BP}(\mathcal{K})
    =\frac{1}{2}\int_{\mathbb R^3} |\nabla \mathcal{K}|^2
    +\frac{a^{2}}{2} \int_{\mathbb R^3} |\Delta \mathcal{K}|^2
    <+\infty.
\]
In practice, in this theory the energy of a particle is finite and its potential finite everywhere. Moreover for large distances, namely $|x-x_{0}|>>1$, it is $\mathcal G(x-x_{0})\approx \mathcal K(x-x_{0})$ so the two theories are experimentally indistinguishable.

Observe that the electrostatic equation \eqref{GaussBP} is still in divergence form since it can be written as
$$ \nabla \cdot \mathbb V = \rho$$
with $\mathbb V = \nabla (a^{2}\Delta \phi - \phi)$. Moreover classical methods permits to find that the fundamental solution is exactly the difference between the fundamental solution of the Poisson equation $\mathcal G$ (the Coulomb potential) and the fundamental solution of the Helmholtz equation $\mathcal K$ (the Yukawa potential).

Indeed it is a general fact. If $L$ is a linear differential operator, and in the sense of distributions it is $Lf=\delta$ and $Lg+kg=\delta$, for $k\neq0$, then we have
\begin{eqnarray*}
    \frac{1}{k} L(L+k)(f-g)
    &=&    \frac1k L(Lf+kf - Lg-kg) \\
    &=& \frac{1}{k} L(\delta + kf - \delta)\\
    &=& \delta.
\end{eqnarray*}

We are not interested in the rigorous deduction of the equations we are going to study, but of course the obtaining of such equations passes through a modification of the Maxwell Lagrangian which is used in this new electromagnetic theory. In fact Bopp and Podolsky deal with the following Lagrangian
\begin{align*}
    \mathcal{L}_{\rm BP}
    &=
    \frac{1}{8\pi}\left\{
    |\nabla\phi+\frac{1}{c}\partial_t\mathbf{A}|^2  -|\nabla \times \mathbf A|^{2}
    \right.\\
    &\qquad\qquad
    \left.+ a^2 \left[ \left(\Delta \phi+\frac{1}{c}\operatorname{div} \partial_t \mathbf{A}\right)^2 - \left|\nabla\times\nabla\times\mathbf{A} + \frac{1}{c}
    \partial_t(\nabla\phi+\frac{1}{c}\partial_t\mathbf{A})\right|^2\right]
    \right\},
\end{align*}
instead of the classical
$$\mathcal{L}_{\rm M}=
\frac{1}{8\pi}\left\{
|\nabla\phi+\frac{1}{c}\partial_t\mathbf{A}|^2 -|\nabla \times \mathbf A|^{2}
\right\},$$
to which it reduces when $a=0$. In the expressions above $\mathbf A$ and $\phi$ are the gauge potentials of the electromagnetic field.

The Euler-Lagrange equations of $\mathcal L_{\textrm{BP}}$ lead to a modification of the well known Maxwell equations.
For physical and technical details we refer the reader to the papers \cite{BPVZ,BPO,BPS2014,BPS2017,CDMPP,CDMNP}.

\subsection{Statement of the results}

In this paper we consider a very particular situation of the Euler-Lagrange equations with the energy functional derived from the Lagrangian $\mathcal L_{\textrm{BP}}$, namely of the coupling between the matter field (governed by the nonlinear Schr\"odinger equation) and the electromagnetic field in the Bopp-Podolsky theory. In fact in the purely electrostatic case (i.e. independence on time and null magnetic field) looking at standing waves solutions
$\psi(x,t) = u(x)e^{i\omega t}$ in presence of a suitable nonlinearity, a system of the following type appears:
\begin{equation*}
    \begin{cases}
    - \Delta u + q(x) \phi u  -  |u|^{p-2}u   = \omega u\\
    - \Delta \phi+a^{2}\Delta^2 \phi   = q(x) u^2.
    \end{cases}
\end{equation*}
The deduction of the equations can be found in \cite{AveniaSiciliano2019}.

We will study the system above in a bounded and smooth domain $\Omega \subset \mathbb R^3$. For simplicity we consider the case with $a=1$; namely the system of equations under study is
\begin{equation}\label{eq:P1}
    \begin{cases}
    - \Delta u + q(x) \phi u  -  |u|^{p-2}u   = \omega u\\
    - \Delta \phi+\Delta^2 \phi   = q(x) u^2.
    \end{cases}
\end{equation}
Here $q\in C(\overline\Omega)\setminus\{0\}$ is a given datum and represents a non-uniform charge distribution, $p\in(2,10/3)$ and the unknowns are
\[
	u,\phi:\Omega\to \mathbb R \quad \text{ and }\quad  \omega\in \mathbb R.
\]

Let us make a brief discussion on the unknowns and the conditions they have to satisfy in order to guarantee the existence of solutions.

About $u$, we assume
\begin{equation}\label{eq:b1}
    u = 0 \quad \text{ on } \ \partial \Omega
\end{equation}
and
\begin{equation} \label{eq:normalization_condition}
    \int_{\Omega} u^2  dx = 1.
\end{equation}

The homogeneous Dirichlet boundary condition simply reminds the fact that the particle is constrained to live in $\Omega$: in fact it ``disappears'' if touching the boundary. The second condition is justified since $u$ represents the amplitude of the wave function of the particle confined in $\Omega$. It is then a quite reasonable normalizing condition which has a physical meaning being related to the probability of finding the particle in the region $\Omega$. It also has interesting mathematical consequences; indeed, it is a conserved quantity along the solution of the evolution problem and it appears naturally in the eigenvalue problem.

For what concerns the unknown $\phi$, the electrostatic potential, we consider two different situations.

\textbf{(I)} We start with the case of a boundary condition on the potential and its Laplacian
\begin{align}
    \phi & = 0\quad \text{ on } \ \partial \Omega, \label{eq:b5} \\
    \Delta \phi& = 0\quad \text{ on } \ \partial \Omega, \label{eq:b4}
\end{align}
namely, homogeneous Dirichlet boundary conditions (with some abuse of notation, as in contrast to the boundary conditions in the next case). Indeed these are called Navier boundary condition. Here the reason for which we have chosen homogeneous boundary conditions is just technical. We could consider non-homogeneous boundary conditions as well. The usual strategy when dealing with non-homogenous boundary conditions is to define a suitable auxiliary problem, which incorporates the boundary conditions, and then make a change of variable in order to reduce the problem to an homogeneous one. In fact this is what we will do for the problem with the Neumann boundary conditions in \textbf{(II)}. Then to avoid technical details, we consider in case \textbf{(I)} homogeneous boundary conditions.

\textbf{(II)} We consider the case of a boundary condition on the normal derivative of the potential and its Laplacian
\begin{align}
    \partial_{\mathbf n}{\phi} & = h_1 \quad \text{ on } \ \partial \Omega, \label{eq:b2} \\
    \partial_{\mathbf n}{\Delta \phi} & = h_2 \quad \text{ on } \ \partial \Omega, \label{eq:b3}
\end{align}
where for simplicity $h_1, h_2$ are continuous functions defined on $\partial\Omega$.

The symbol $\mathbf n$ denotes the unit vector normal to $\partial \Omega$ pointing outwards, so we are in the presence of Neumann boundary conditions on the potential and its Laplacian. These boundary conditions are related to the flux of the electric field through the boundary. Observe that in case $h_{1}=h_{2}=0$, by integrating the second equation in \eqref{eq:P1} in $\Omega$, we find
\[
	0= \int_{\Omega} q(x) u^{2}dx.
\]
In particular the function $q$ has to change sign in order to find solutions (in particular it cannot be constant). However, as we will see in Theorem \ref{thm:main_theorem} we need a further condition which involves the function $q$.

We point out that having the real number $\omega$ as an unknown of the problem (joint with $u$) means that the wave function is completely unknown, no a priori frequency is given. We remark that it is also possible to consider the problem where $\omega$ is a priori given: the approach is a little bit different; in fact solutions can be found also in this case but there is no a priori control on the $L^{2}$ norm. However we believe it is more natural from a physical point of view to look also at the frequency as unknown. This is related with the normalizing condition since from a variational point of view, $\omega$ is just the Lagrange multiplier of the energy functional restricted to the sphere and related to its critical point which is exactly the solution $u$. In some sense the value of the frequency and the $L^{2}-$norm are dual one to the other.

Furthermore, the range $p\in(2,10/3)$ guarantees that the energy functional of the problem is bounded below on the manifold made by the functions with fixed $L^{2}-$norm. This is the so-called \textit{mass or $L^{2}-$subcritical case}, and an interesting paper concerning the behaviour of functionals of this type on such a constraint is \cite{SS2}, where the role of the exponent $10/3$ is evident.

Finally observe that the normalizing value of the $L^{2}-$norm is not essential: the results are true for any other positive value different from $1$, without any relation with the size of~$\Omega$.

Here are the main results we are able to obtain by using variational methods and tools in critical point theory. However, in order to state them precisely, we introduce the functional spaces where the solutions will be found.

As usual, $H^{2}(\Omega)$ and $H^{1}_{0}(\Omega)$ denote the Sobolev spaces. In particular $H^{1}_{0}(\Omega)$ is endowed with the scalar product
\[
	(u,v) = \int_{\Omega} \nabla u \nabla v dx
\]
and associated norm
\[
	 \left\|u\right\|:=\left(\int_{\Omega}\left|\nabla u\right|^{2}dx\right)^{{1}/{2}}.
\]
Let $\mathbb{H}:=H^{1}_{0}(\Omega)\cap H^{2}\left(\Omega\right)$, whose norm (denoted for simplicity with the same symbol) is
\[
	\left\|\phi\right\|:=\left(\int_{\Omega}\left|\Delta \phi\right|^{2}dx+\int_{\Omega}\left|\nabla \phi\right|^{2}dx\right)^{{1}/{2}},
\]
which of course comes from the scalar product (again denoted with the same symbol as before)
\[
	(\phi,\psi)=\int_{\Omega}\Delta \phi\Delta\psi dx+\int_{\Omega}\nabla \phi\nabla\psi dx.
\]
Throughout this work we denote by
\[
	|u|_{p}:=\left(\int_{\Omega}\left|u\right|^{p}dx\right)^{{1}/{p}}
\]
the norm in $L^{p}(\Omega)$. Usually we will not write the variable $x\in \Omega$ in the functions involved; except for the function $q$ to emphasize the fact that $q$ is not a constant, in contrast to many papers dealing with this type of system.

We define the $L^{2}-$sphere in $H^{1}_{0}(\Omega)$ by
\[
	B:=\left\{u\in H^{1}_{0}(\Omega): |u|_{2}=1\right\},
\]
which, in view of the previous discussion about the unknown $\omega$ seen as the Lagrange multiplier, will have a major role.

Here is the first result, taken from \cite{LG}.

\begin{theorem}[{Existence result with boundary conditions \textbf{(I)}}] \label{tma1.1}
    Let $p \in (2, 10/3)$. Then there exists a sequence of solution $\{(u_{n},\omega_{n},\phi_{n} )\} _{n}\subset B\times \mathbb R\times \mathbb H $ of problem \eqref{eq:P1} under conditions \eqref{eq:b1}-\eqref{eq:normalization_condition} and \eqref{eq:b5}-\eqref{eq:b4}, with
    $$
    \omega_{n} \to+ \infty, \quad \|u_{n}\| \to +\infty, \quad \text{ as } \ \  n\to+\infty.
    $$
\end{theorem}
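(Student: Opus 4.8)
The plan is to eliminate the potential $\phi$ first and then apply minimax theory to a reduced even functional on the sphere $B$. For fixed $u\in H^1_0(\Omega)$, the second equation of \eqref{eq:P1} under the Navier conditions \eqref{eq:b5}--\eqref{eq:b4} has the weak form $(\phi,\psi)=\int_\Omega q(x)u^2\psi\,dx$ for all $\psi\in\mathbb H$, where $(\cdot,\cdot)$ is exactly the scalar product of $\mathbb H$ (the boundary terms produced by integrating $-\Delta\phi+\Delta^2\phi$ by parts vanish precisely because $\phi=\Delta\phi=0$ on $\partial\Omega$). Since $\mathbb H\hookrightarrow L^\infty(\Omega)$ in dimension three, the right-hand side is a bounded linear functional on $\mathbb H$, so Lax--Milgram (or Riesz) yields a unique solution $\phi_u\in\mathbb H$, and the map $u\mapsto\phi_u$ is well defined with $\|\phi_u\|^2=\int_\Omega q(x)u^2\phi_u\,dx$. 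I would then introduce the reduced functional
\[
I(u)=\frac12\|u\|^2+\frac14\|\phi_u\|^2-\frac1p|u|_p^p,
\]
obtained by substituting $\phi=\phi_u$ into the total energy; its critical points constrained to $B$ correspond, via a Lagrange multiplier $\omega$, to solutions of \eqref{eq:P1}.

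Next I would show that $I$ is bounded below and coercive on $B$. Since $\|\phi_u\|^2\ge0$ and $|u|_2=1$ on $B$, the Gagliardo--Nirenberg inequality gives $|u|_p^p\le C\|u\|^{p\gamma}$ with $\gamma=\tfrac{3(p-2)}{2p}$, so that $p\gamma=\tfrac{3(p-2)}{2}<2$ exactly because $p<10/3$. Hence
\[
I(u)\ge\frac12\|u\|^2-\frac Cp\|u\|^{p\gamma},
\]
which is bounded below and tends to $+\infty$ as $\|u\|\to+\infty$ along $B$; in particular $\sup_B I=+\infty$ while $\inf_B I>-\infty$. Moreover $\phi_{-u}=\phi_u$ (the datum enters through $u^2$), so $I$ is even and $B$ is a symmetric $C^1$ manifold.

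I would then verify the Palais--Smale condition for $I$ on $B$. Coercivity forces any $(PS)$ sequence to be bounded in $H^1_0(\Omega)$; the compact embeddings $H^1_0(\Omega)\hookrightarrow\hookrightarrow L^2(\Omega),L^p(\Omega)$ available on the bounded domain $\Omega$ handle the nonlinear term, while the continuity and compactness of $u\mapsto\phi_u$ (a consequence of the weak formulation and the same compact embeddings) control the coupling term $\int_\Omega q\phi_u u^2$. With $(PS)$ in hand, I would run the Lusternik--Schnirelmann/Krasnoselskii genus scheme: setting
\[
c_k=\inf_{\substack{A\subset B,\ A=-A\\\gamma(A)\ge k}}\ \sup_{u\in A}I(u),
\]
evenness, boundedness below and $(PS)$ guarantee that each $c_k$ is a critical value. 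Since $B$ is the unit sphere of an infinite-dimensional space it carries symmetric subsets of arbitrarily large genus, so $\{c_k\}$ is well defined and nondecreasing; coercivity makes every sublevel set $\{I\le c\}\cap B$ bounded, hence of finite genus, which forces $c_k\to+\infty$. This produces infinitely many constrained critical points $u_n\in B$ with $I(u_n)\to+\infty$, and coercivity then forces $\|u_n\|\to+\infty$.

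Finally I would recover the full triple and the asymptotics of $\omega_n$. Each $u_n$ being critical for $I$ on $B$ yields a Lagrange multiplier $\omega_n\in\mathbb R$ with $-\Delta u_n+q\phi_{u_n}u_n-|u_n|^{p-2}u_n=\omega_n u_n$; taking $\phi_n:=\phi_{u_n}$ gives a solution $(u_n,\omega_n,\phi_n)\in B\times\mathbb R\times\mathbb H$ of \eqref{eq:P1}. Testing the first equation with $u_n$ and using $|u_n|_2=1$ together with $\int_\Omega q\phi_{u_n}u_n^2=\|\phi_{u_n}\|^2$ gives
\[
\omega_n=\|u_n\|^2+\|\phi_{u_n}\|^2-|u_n|_p^p;
\]
since $|u_n|_p^p\le C\|u_n\|^{p\gamma}$ with $p\gamma<2$ is of lower order than $\|u_n\|^2\to+\infty$, and $\|\phi_{u_n}\|^2\ge0$, we conclude $\omega_n\to+\infty$. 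I expect the principal obstacle to be the verification of the $(PS)$ condition together with the compactness of the solution map $u\mapsto\phi_u$, and the deformation argument ensuring that the minimax levels $c_k$ genuinely diverge---this divergence being exactly what simultaneously drives $\|u_n\|\to+\infty$ and $\omega_n\to+\infty$.
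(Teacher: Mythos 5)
Your proposal follows essentially the same route as the paper: the Benci--Fortunato reduction $u\mapsto\phi_u$ via the Riesz theorem in $\mathbb H$, the even reduced functional $I(u)=\frac12\|u\|^2+\frac14\|\phi_u\|^2-\frac1p|u|_p^p$ on $B$, boundedness below and coercivity via Gagliardo--Nirenberg (which is exactly where $p<10/3$ enters), the Palais--Smale condition via compact embeddings and the compactness of $u\mapsto\phi_u$ (paper's Lemma \ref{condPS}), symmetric compact subsets of $B$ of arbitrary genus (paper's Lemma \ref{lemma7}), and a genus minimax producing critical values and Lagrange multipliers. Your handling of the multiplier asymptotics is in fact more careful than the survey's displayed computation: you retain the term $-|u_n|_p^p$ in the identity $\omega_n=\|u_n\|^2+\|\phi_{u_n}\|^2-|u_n|_p^p$ and absorb it using $|u_n|_p^p\le C\|u_n\|^{3(p-2)/2}$ with $3(p-2)/2<2$, whereas \eqref{eq53} and \eqref{eq:fauan} in the paper drop this term. (A small point of phrasing: to get $\|u_n\|\to\infty$ from $I(u_n)\to\infty$ you need, not coercivity, but the fact that $I$ is bounded on $H^1$-bounded subsets of $B$; this follows from the estimate $\|\phi_u\|\le c\|u\|^2$, i.e. \eqref{eq:stima}, which you have.)

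There is, however, one genuine gap, and it sits at the crux of the argument: the divergence of the minimax levels $c_k\to+\infty$. You justify it by ``coercivity makes every sublevel set $\{I\le c\}\cap B$ bounded, hence of finite genus.'' The inference ``bounded $\Rightarrow$ finite genus'' is invalid in general: the unit sphere of $H^1_0(\Omega)$ is bounded, closed, symmetric and avoids the origin, yet it has infinite genus. What is true is that an $H^1$-bounded, closed, symmetric subset $A$ of the $L^2$-sphere $B$ has finite genus, but this rests on the compactness of the embedding $H^1_0(\Omega)\hookrightarrow L^2(\Omega)$ (equivalently, the divergence of the Dirichlet eigenvalues $\lambda_k$), not on boundedness per se: if $\|u\|\le R$ on $A$, choose $m$ with $\lambda_{m+1}>R^2$; the $L^2$-orthogonal projection $P_m$ onto the span of the first $m$ Dirichlet eigenfunctions is linear (hence odd), continuous, and cannot vanish on $A$, since $P_mu=0$ and $|u|_2=1$ would force $\|u\|^2\ge\lambda_{m+1}|u|_2^2>R^2$; therefore $\gamma(A)\le m$. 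You must either insert this spectral argument, or argue as the paper does in Lemma \ref{lem:sublevels_have_finite_genus}: a $C^1$ functional bounded below and satisfying the Palais--Smale condition on the manifold has sublevels of finite genus, proved via the deformation lemma and the subadditivity and continuity properties of the genus, with no boundedness of sublevels needed at all. Either repair completes your proof, and you have already established the hypotheses (boundedness below and Palais--Smale) required for the second one; but as written, the key step is unsupported.
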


The second result is from \cite{AS}.

\begin{theorem}[{Existence result with boundary conditions \textbf{(II)}}]
\label{thm:main_theorem}
    Let $p \in (2, 10/3)$ and set
    \begin{equation}\label{eq:alfa}
    \alpha := \int_{\partial \Omega} h_2 ds - \int_{\partial \Omega} h_1 ds.
    \end{equation}
    Assume that
    \[
    	\inf_\Omega q < \alpha < \sup_\Omega q\quad \text{ and } \quad |q^{-1}(\alpha)| = 0.
    \]
    Then there exist infinitely many solutions $\{(u_n, \omega_n, \phi_n)\} \subset B\times \mathbb R \times H^2(\Omega)$ to the problem \eqref{eq:P1} under conditions \eqref{eq:b1}-\eqref{eq:normalization_condition} and \eqref{eq:b2}-\eqref{eq:b3}, with
    $$\int_{\Omega}q(x)u_{n}^{2}dx=\alpha,\quad \omega_{n}-\frac{\alpha}{|\Omega|}\int_{\Omega}\phi_{n} dx\to+\infty, \quad \|u_{n}\|\to+\infty,
    \quad \text{ as } \ \  n\to+\infty.$$
\end{theorem}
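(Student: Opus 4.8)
The plan is to eliminate $\phi$ from \eqref{eq:P1}, recast the problem as the search for critical points of an even functional on a codimension-two manifold inside $B$, and then apply the Lusternik--Schnirelmann theory. I would begin with the second equation. For fixed $u \in H^1_0(\Omega)$ the linear fourth-order problem $-\Delta\phi + \Delta^2\phi = qu^2$ under the Neumann data \eqref{eq:b2}--\eqref{eq:b3} has the constant functions in the kernel of its operator; testing the weak formulation with $\psi \equiv 1$ and integrating by parts shows that it is solvable if and only if
\[
\int_\Omega qu^2\,dx = \int_{\partial\Omega} h_2\, ds - \int_{\partial\Omega} h_1\, ds = \alpha,
\]
which is exactly the first constraint in the statement and the reason for definition \eqref{eq:alfa}. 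To deal with the inhomogeneous data I would subtract a fixed function carrying $h_1,h_2$, reducing to a homogeneous Neumann problem, and then solve by Lax--Milgram after normalising $\int_\Omega\phi\,dx = 0$ to kill the constant ambiguity; this produces a unique $\phi_u \in H^2(\Omega)$ depending smoothly on $u$, with the crucial identity $\int_\Omega q\phi_u u^2 = \int_\Omega|\Delta\phi_u|^2 + \int_\Omega|\nabla\phi_u|^2 \ge 0$ up to controlled boundary terms, so that the coupling is nonnegative.

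Next I would introduce
\[
\mathcal{M} := \Big\{\, u \in B : \int_\Omega qu^2\,dx = \alpha \,\Big\}
\]
and check it is a smooth $\mathbb{Z}_2$-symmetric Hilbert submanifold. The assumption $\inf_\Omega q < \alpha < \sup_\Omega q$ makes $\mathcal{M}$ nonempty, since concentrating $u$ where $q$ is small or large lets the quotient $\int_\Omega qu^2$ (at $|u|_2 = 1$) sweep the whole interval $(\inf_\Omega q, \sup_\Omega q)$; the condition $|q^{-1}(\alpha)| = 0$ is precisely what keeps the differentials $v \mapsto \int_\Omega uv$ and $v \mapsto \int_\Omega quv$ linearly independent on $\mathcal{M}$, because a dependence would force $q \equiv \alpha$ on the support of $u$, impossible on a null level set. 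On $\mathcal{M}$ I would study the reduced energy
\[
I(u) = \frac12\int_\Omega|\nabla u|^2 - \frac1p\int_\Omega|u|^p + \frac14\int_\Omega q\phi_u u^2,
\]
which is even because $\phi_u$ depends on $u$ only through $u^2$. The coupling term being nonnegative and the Gagliardo--Nirenberg inequality giving $\int_\Omega|u|^p \le C\|u\|^{3(p-2)/2}$ on $B$, where $3(p-2)/2 < 2$ exactly when $p < 10/3$, the functional $I$ is coercive and bounded below on $\mathcal{M}$; the compact embedding $H^1_0(\Omega) \hookrightarrow L^p(\Omega)$ then yields the Palais--Smale condition.

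With these ingredients the symmetric minimax scheme applies: since $\mathcal{M}$ carries symmetric subsets of arbitrarily large Krasnoselskii genus and $I$ is even, bounded below and satisfies Palais--Smale, one obtains critical values $c_n \to +\infty$ with critical points $u_n \in \mathcal{M}$. Each $u_n$ solves an Euler--Lagrange equation with two Lagrange multipliers $\omega_n,\mu_n$,
\[
-\Delta u_n + q\phi_{u_n} u_n - |u_n|^{p-2}u_n = \omega_n u_n + \mu_n q\, u_n,
\]
and the second multiplier is absorbed by the constant freedom of the potential: setting $\phi_n := \phi_{u_n} - \mu_n$ restores the first equation of \eqref{eq:P1} while $\phi_n$ still solves the second under \eqref{eq:b2}--\eqref{eq:b3}. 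Testing this equation with $u_n$, and recalling that the normalisation $\int_\Omega\phi_{u_n}\,dx = 0$ turns $\int_\Omega\phi_n\,dx$ into $-\mu_n|\Omega|$, one finds $\omega_n - \frac{\alpha}{|\Omega|}\int_\Omega\phi_n\,dx = \omega_n + \alpha\mu_n = \|u_n\|^2 + \int_\Omega q\phi_{u_n}u_n^2 - |u_n|_p^p$; since the coupling is nonnegative, $|u_n|_p^p \le C\|u_n\|^{3(p-2)/2}$ with $3(p-2)/2 < 2$, and coercivity forces $\|u_n\| \to +\infty$ out of $c_n \to +\infty$, the right-hand side diverges, which is the asserted behaviour.

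The main obstacle I expect is the $\phi$-step: formulating the fourth-order Neumann problem correctly, isolating the compatibility condition $\int_\Omega qu^2 = \alpha$, homogenising the boundary data, and proving the smooth bounded dependence $u \mapsto \phi_u$ together with the positivity identity for the coupling term. Once that reduction is secured, the manifold structure, the energy estimates and the genus argument follow along standard lines, the only delicate bookkeeping being the absorption of the second multiplier $\mu_n$ into the additive constant of $\phi_n$ that converts the constrained Euler--Lagrange equation back into the original system.
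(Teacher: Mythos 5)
Your strategy coincides with the paper's (the compatibility condition identifying $\alpha$, the codimension-two manifold $M=B\cap N$, reduction to an even functional of $u$ alone, coercivity from $p<10/3$, a genus minimax, and absorption of the second multiplier into the additive constant of the potential), but the two steps you treat as routine are precisely where the real work lies, and as written they are gaps. The first concerns the reduction map. You define $\phi_u$ by solving the Neumann problem for fixed $u$, and you correctly observe that this problem is solvable if and only if $\int_\Omega q(x)u^2\,dx=\alpha$. But then $\phi_u$ exists only for $u$ in the constraint set $N$, which has empty interior in $H_0^1(\Omega)$, so the claim that $\phi_u$ ``depends smoothly on $u$'' has no meaning as stated; worse, when $\alpha=0$ (allowed by the hypotheses, and indeed forced when $h_1=h_2=0$, in which case $q$ must change sign) the set $N$ is a quadratic cone through the origin and is not even a manifold. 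This is exactly the point where the paper says the naive Benci--Fortunato scheme fails. Its fix (Proposition \ref{prop:auxiiary_problem_to_extend_Phi}) is to extend the solution map to all of $H_0^1(\Omega)$: replace the source $q(x)u^2$ by $q(x)u^2-\overline{q u^2}$ and solve in the zero-average space $\widetilde V$, where the problem is solvable for \emph{every} $u$ because constants are not test functions; the resulting map $\Phi$ is globally defined, $C^1$, and agrees with the true solution map exactly on $N$. Your Lax--Milgram step in the zero-average space would in fact produce this extension, but you never decouple it from the compatibility condition; without the extension (or a chart-based argument on $M$), the differentiability of your reduced functional $I$ and the identity $I'(u)[v]=F_u'(u,\phi_u)[v]$ are unjustified.

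The second gap is the Palais--Smale condition, which you dispatch with ``the compact embedding then yields PS.'' That is insufficient on a two-constraint manifold: a PS sequence satisfies $-\Delta u_n+q(x)(\varphi_n+\chi)u_n-|u_n|^{p-2}u_n=\lambda_n u_n+\beta_n q(x)u_n+v_n$ with \emph{two} multiplier sequences, and before any limit can be taken one must prove that $\{\lambda_n\}$ and $\{\beta_n\}$ are bounded. Bounding $\beta_n$ is where the hypothesis $|q^{-1}(\alpha)|=0$ enters a second time, beyond the linear independence of the constraint differentials that you do mention: since the weak limit $u\in M$ has $|u|_2=1$, the function $(q-\alpha)u$ cannot vanish a.e., so one can choose $w\in C_0^\infty(\Omega)$ with $\int_\Omega(q(x)-\alpha)uw\,dx\neq 0$ and solve the tested equation for $\beta_n$, obtaining its boundedness and then that of $\lambda_n$; only after this does testing with $u_n-u$ give $u_n\to u$ in $H_0^1(\Omega)$ (this is the content of Proposition \ref{prop:Palais_Smale}). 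Relatedly, the divergence $c_n\to+\infty$ of the minimax levels, which drives every asymptotic assertion of the theorem, is not automatic from the existence of sets of arbitrary genus: it rests on the finite-genus-of-sublevels result (Lemma \ref{lem:sublevels_have_finite_genus}), which itself requires PS. With these two points repaired, the rest of your outline (nonemptiness of $M$, genus of $M\cap V_k$, the multiplier bookkeeping $\phi_n=\phi_{u_n}-\mu_n$, and the final divergence estimates) matches the paper's proof.
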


Our approach is variational, indeed the solutions will be found as critical points of an energy functional restricted to a suitable constraint. In this context, by a ground state solution we mean the solution with minimal energy. As a byproduct of the proof, we obtain, for both results, that also the energy of these solutions is divergent and that there exists always a ground state solution that can be assumed positive.

We conclude by referring the reader to \cite{AveniaSiciliano2019,FS,H,MS,RS,HS,SS}, and the references therein, for some recent results concerning the existence of solutions for systems of type \eqref{eq:P1}. We cite finally the papers \cite{CRZ,MSJDE, PV, NTV, RQSS,S} about Schr\"odinger type equations on bounded domains and prescribed $L^{2}-$norm.

The paper is organized as follows. We will treat the two problems separately in Section~\ref{sec:Dir}, for Dirichlet boundary conditions, and in Section \ref{sec:aux}, for Neumann boundary conditions.

For each of them we will introduce the appropriate variational setting in order to address the problem. In particular an energy functional has to be defined with the appropriate geometric and compactness properties. The solution will be found as critical points of the energy functional on a suitable smooth manifold, whose differential and topological properties are shown in order to apply the critical point theory and variational methods, more specifically the Lusternik-Schnirelmann theory. However, in case of Dirichlet boundary conditions the manifold we will consider is the $L^{2}$ sphere, which is evidently a (nonempty) manifold. In contrast, for the Neumann boundary conditions we will restrict the functional to a subset of the $L^{2}$ sphere and we need to show it is nonempty and possesses a differentiable structure of a manifold. To this aim the assumptions on $\alpha$ will have a fundamental role. The problem with Neumann boundary conditions is also more involved since these conditions are non homogeneous, so a suitable auxiliary problem has to be introduced.

However, we begin with the next Section \ref{sec:prelim}, where we present some useful preliminaries.

As a matter of notations, we use the letters $c, c',\ldots $ to denote positive constant whose value can change from line to line.

\section{Few general facts}
\label{sec:prelim}

We spend a few words on our methods. We use Critical Point Theory in order to show how the solutions can be associated to a critical point of a functional on a suitable manifold in a Hilbert space.
In view of the applications of variational methods and to use topological invariants of the Lusternik-Schnirelmann Theory, some facts like compactness and geometry of the functional have to be shown.

We recall that in many problems of this type, the frequency $\omega$ of the wave function is fixed.
Then the approach in finding solutions is different, in particular the $L^{2}$ norm of the solutions $u$
is not given a priori. In our case, the wave function is completely unknown, so both $u$ and $\omega$ are unknowns, and we are looking for solutions with a priori fixed $L^{2}$ norm. Let us recall that the $L^{2}$ norm is constant in time on the solutions of the evolution problem, so it is constantly equal to the $L^{2}$ norm of the initial datum. As a consequence, the unknowns $\omega$ related to the solutions will be found as the Lagrange multipliers associated to the critical points on the manifold made by the unit sphere in $L^{2}$, or some submanifold. For these reasons, we think that it is natural to consider the frequencies of the wave function, $\omega$, as an unknown and the $L^{2}$ norm of $u$ fixed, since it is more interesting also from a physical point of view.

Let us start by recalling the definition of genus of Krasnoselskii. Given $A$ a closed and symmetric subset of some Banach space, with $0\notin A$, the \textit{genus }
of $A$ is denoted as $\gamma(A)$ and defined as the least integer $k$ for which there exists a continuous and odd map $h:A\to \mathbb R^{k}\setminus\{0\}$. By definition it is $\gamma(\emptyset) = 0$ and if it is not possible to construct continuous odd maps from $A$ to any $\mathbb R^{k}\setminus\{0\}$, set $\gamma(A) = +\infty$.

We know that the genus is a topological invariant (by odd homeomorphism) and that the genus of the sphere in $\mathbb R^{m}$ is $m$ (see e.g., \cite{Rab}).

Recalling that $B$ is the $L^{2}-$sphere in $H^{1}_{0}(\Omega)$, we have the following.

\begin{lemma}\label{lemma7}
    For any integer $m$ there exists a compact and symmetric subset $\textsc{K}$ of $B$
    such that $\gamma(\textsc{K})=m$. In other words, the genus of $B$ is infinite.
\end{lemma}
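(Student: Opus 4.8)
The plan is to realize the standard Euclidean sphere $S^{m-1}\subset\mathbb R^m$ inside $B$ by means of an odd homeomorphism, and then to invoke the invariance of the genus under odd homeomorphisms together with the value $\gamma(S^{m-1})=m$ recalled above. Concretely, I would fix $m$ linearly independent functions $e_1,\dots,e_m\in H^1_0(\Omega)$ (for instance the first $m$ Dirichlet eigenfunctions of $-\Delta$, or any $m$ linearly independent functions in $C_c^\infty(\Omega)$) and set $V_m:=\operatorname{span}\{e_1,\dots,e_m\}$, an $m$-dimensional subspace of $H^1_0(\Omega)$. The candidate for the desired set is then
\[
\textsc K:=V_m\cap B=\bigl\{u\in V_m:\ |u|_2=1\bigr\},
\]
namely the unit $L^2$-sphere inside $V_m$.

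Next I would verify the three required properties. That $\textsc K$ is symmetric and does not contain $0$ is immediate, since $|-u|_2=|u|_2$ and $|0|_2=0\neq 1$. For compactness I would use that $V_m$ is finite-dimensional, so that the restriction of $|\cdot|_2$ to $V_m$ is a genuine norm (it is positive definite, as $|u|_2=0$ forces $u=0$) and all norms on $V_m$ are equivalent; in particular $\textsc K$ is the unit sphere of this norm, hence closed and bounded in $V_m$, and therefore compact (equivalently in $H^1_0(\Omega)$).

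The core step is the identity $\gamma(\textsc K)=m$. Writing $u=\sum_{i=1}^m c_i e_i$ and identifying $V_m\cong\mathbb R^m$ through the coordinates $u\mapsto(c_1,\dots,c_m)$, the $L^2$-norm transforms into an equivalent norm $N$ on $\mathbb R^m$, and $\textsc K$ corresponds to its unit sphere $\{c\in\mathbb R^m:N(c)=1\}$. The radial map $c\mapsto c/N(c)$, defined on $S^{m-1}$, is an odd homeomorphism onto $\{N=1\}$, with odd inverse $c\mapsto c/|c|$. Since the genus is invariant under odd homeomorphisms and $\gamma(S^{m-1})=m$, this yields $\gamma(\textsc K)=m$. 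Finally, the assertion that the genus of $B$ is infinite follows from the monotonicity of the genus: for every $m$ we have exhibited $\textsc K\subset B$ with $\gamma(\textsc K)=m$, whence $\gamma(B)\ge m$ for all $m$, i.e.\ $\gamma(B)=+\infty$.

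I do not anticipate a serious obstacle. The only point demanding a little care is the interplay between the two norms at play: the set $\textsc K$ is an $L^2$-sphere, yet it is regarded as a subset of the $H^1_0(\Omega)$-sphere $B$, and it is precisely the finite-dimensionality of $V_m$ (hence the equivalence of the $L^2$- and $H^1_0$-norms on it) that simultaneously delivers compactness in $H^1_0(\Omega)$ and the homeomorphism with the Euclidean sphere.
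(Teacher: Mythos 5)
Your proposal is correct and follows essentially the same route as the paper: intersect $B$ with an $m$-dimensional subspace of $H^1_0(\Omega)$, identify the resulting $L^2$-sphere with $\mathbb S^{m-1}$ via an odd homeomorphism, and invoke the invariance of the genus under odd homeomorphisms. Your write-up is in fact somewhat more careful than the paper's, since you explicitly verify compactness (via equivalence of norms in finite dimensions) and spell out the radial map $c\mapsto c/N(c)$ with its odd inverse, details the paper leaves implicit.
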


\begin{proof}
    Let $\textsc{H}_{m}:=span\left\{u_{1},\ldots,u_{m}\right\}$ be a $m$-dimensional subspace of $H^{1}_{0}\left(\Omega\right)$. Define
    \[
    	\textsc{K}:=B\cap \textsc{H}_{m}=\left\{u\in \textsc{H}_{m}: |u|_{2}=1\right\}.
    \]
    Consider the odd homeomorphism \begin{equation*}\begin{array}{rcl}
    h:\textsc{K}&\longrightarrow& \mathbb{S}^{m-1}\\
    u&\longmapsto&h(u)= \displaystyle\frac{x}{\left\|x\right\|_{\mathbb{R}^{m}}},
    \end{array} \text{ where } x=(x_{1},\dots, x_{m})\in \mathbb{R}^{m}.
    \end{equation*} By the genus invariance via odd homeomorphism (see e.g., \cite[Proposition 5.4]{Struwe}), we obtain
     \[
    	\gamma(\textsc{K})=\gamma(\mathbb{S}^{m-1})=m
    \]
     concluding the proof.
\end{proof}

A useful tool in Critical Point Theory to obtain the compactness is the well known Palais-Smale condition that we recall now. We say that a $C^{1}$ functional $J $ on a Hilbert space $H$, satisfies the Palais-Smale condition on the differentiable manifold $\mathcal M\subset H$
if any sequence $\{w_{n}\}\subset \mathcal M$ such that
\[
	\{J (w_{n})\}\ \text{ is bounded and }\ J' (w_{n}) \to 0 \ \text{ in }\ \textrm{T}_{w_{n}}\mathcal M,
\]
called also a Palais-Smale sequence, has a convergent subsequence in the $H$ norm. Of course $\textrm{T}_{w_{n}}\mathcal M$ is the tangent space in $w_{n}$ to the manifold $\mathcal M$.

Let us recall now a classical result in critical point theory. We give the proof for the reader convenience, see also \cite{BenciFortunato1998}.

\begin{lemma}
\label{lem:sublevels_have_finite_genus}
    Let $J$ be a $C^{1}$ functional defined on a smooth manifold $\mathcal M$ in a Hilbert space $H$ which is bounded from below and satisfies the Palais-Smale condition.
    For any $b \in \mathbb R$ the sublevel
    $$
    J^b = \left\{u \in \mathcal M \ : \ J(u) \leq b\right\}
    $$
    has finite genus.
\end{lemma}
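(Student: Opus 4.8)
The plan is to prove the contrapositive-style statement by a compactness argument: if the sublevel $J^b$ had infinite genus, we would extract a Palais-Smale sequence contradicting either boundedness or the Palais-Smale condition. More directly, I would argue as follows. Since $J$ is bounded from below on $\mathcal M$, fix a lower bound and consider the set $J^b$. The key idea is that, by the deformation/minimax machinery of Lusternik-Schnirelmann theory, sets of large genus inside a sublevel force the existence of many critical values below $b$; accumulation of such critical points is precisely what the Palais-Smale condition rules out. So the finiteness of the genus should follow from the fact that there can only be finitely many critical levels in the compact range $[\inf_{\mathcal M} J, b]$, together with local control of the genus near each critical point.

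First I would recall the standard local property of the genus at a critical point: if $K_c$ denotes the (necessarily compact, by Palais-Smale) set of critical points at level $c$, then $\gamma(K_c) < +\infty$, and moreover there is a symmetric neighborhood $N$ of $K_c$ with $\gamma(N) = \gamma(K_c)$. This is where compactness of $K_c$ is used, and it is the reason the Palais-Smale condition enters. Second, I would invoke the deformation lemma: away from critical points, and between consecutive critical levels, the sublevels can be deformed into one another by an odd (equivariant) flow, so the genus is constant on intervals free of critical values. Third, I would use the subadditivity and monotonicity of the genus to combine these pieces: passing from just below a critical level $c$ to just above it increases the genus by at most $\gamma(K_c)$, a finite amount.

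The argument then assembles into an induction. Because $J$ is bounded below and satisfies Palais-Smale, the critical values in $[\inf_{\mathcal M} J, b]$ are isolated and, in fact, finite in number — any infinite sequence of distinct critical values in this bounded interval would yield, after selecting critical points, a Palais-Smale sequence whose limit point produces an accumulation of critical values, contradicting the isolation that Palais-Smale guarantees. Starting from a level below all critical values, where the sublevel is empty (or has genus zero), and crossing each of the finitely many critical levels $c_1 < \dots < c_N \le b$ in turn, the genus increases by at most $\gamma(K_{c_i}) < +\infty$ at each crossing. Summing, $\gamma(J^b) \le \sum_{i=1}^N \gamma(K_{c_i}) < +\infty$.

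The main obstacle I anticipate is making the ``finitely many critical levels'' step fully rigorous: one must argue carefully that Palais-Smale plus boundedness below forbids an infinite set of critical values accumulating in $[\inf J, b]$, and that at each such level the critical set is compact so that its genus is finite and stable under small symmetric neighborhoods. The deformation lemma in the constrained (manifold) setting also requires the gradient flow to respect $\mathcal M$ and to be odd, which is where the symmetry of $J$ (even functional) and of $\mathcal M$ is essential — the problem's functional is even in $u$, so this symmetry is available. Everything else (monotonicity and subadditivity of $\gamma$, constancy of genus across critical-value-free intervals) is routine once these two compactness points are secured.
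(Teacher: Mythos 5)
Your toolkit (compactness of critical sets via Palais--Smale, finite genus of a symmetric neighborhood of $K_c$, odd deformations, monotonicity and subadditivity of the genus) is the right one, and it is the same toolkit the paper uses. But your argument hinges on a claim that is false: that boundedness from below plus the Palais--Smale condition forces the critical values in $[\inf_{\mathcal M} J, b]$ to be isolated, hence finite in number. Palais--Smale gives no such thing. It guarantees that each critical set $K_c$ is compact (indeed, that the set of critical points with levels in a compact interval is compact), but it does not prevent infinitely many distinct critical values from accumulating at some level: if critical points $u_n$ with distinct values $J(u_n)\to c$ are selected, Palais--Smale merely says a subsequence converges to a critical point at level $c$ --- no contradiction arises. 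A one-dimensional example: a coercive smooth function on $\mathbb R$ equal to $x^4\sin^2(1/x)$ near the origin is bounded below, satisfies Palais--Smale, and has infinitely many critical values in $[0,1]$. So your induction over ``the finitely many critical levels $c_1 < \dots < c_N \le b$'' cannot start, and the bound $\sum_{i=1}^N \gamma(K_{c_i})$ could a priori be an infinite sum.

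The paper's proof avoids this trap by arguing at a single, well-chosen level rather than inducting over all of them. Suppose some sublevel has infinite genus and set $b^* = \inf D$, where $D=\{b : \gamma(J^b) = +\infty\}$; this infimum is finite because $J$ is bounded below. By Palais--Smale, $K_{b^*}$ is compact, so it admits a closed symmetric neighborhood $Z$ with $\gamma(Z) < +\infty$, and the deformation lemma gives $\varepsilon > 0$ and an odd deformation carrying $J^{b^*+\varepsilon} \setminus Z$ into $J^{b^*-\varepsilon}$. Subadditivity, monotonicity and deformation-invariance of the genus then yield
$$
\gamma\left(J^{b^*+\varepsilon}\right) \leq \gamma\left(J^{b^*+\varepsilon}\setminus Z\right) + \gamma(Z) \leq \gamma\left(J^{b^*-\varepsilon}\right) + \gamma(Z) < +\infty,
$$
the last inequality holding because $b^*-\varepsilon$ lies strictly below $\inf D$. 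This contradicts $b^*$ being the infimum of $D$. Note that this is exactly your ``crossing one critical level'' step, but deployed once, at the infimum of the bad levels, instead of inside an induction whose premise (finitely many critical values) is unavailable. If you replace your finiteness claim by this infimum argument, your proof becomes the paper's.
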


\begin{proof}
    Suppose by contradiction that there exists a real number $c$ such that $\gamma(J ^{c})=+\infty$; this means that
     $$
     D:=\left\{b\in \mathbb{R}: \gamma(J ^{b})=\infty\right\}\neq \emptyset.
     $$
     We know that $J $ is bounded from below on $\mathcal M$, hence
     \begin{equation*}
     -\infty<{b^{*}}:=\inf D < +\infty.
     \end{equation*}
      We claim that $b^{*}\notin D$. Indeed, since $J $ satisfies the Palais-Smale condition on $\mathcal M$ the set
     \[
    	K_{b^{*}}:=\left\{u\in \mathcal M: J (u)=b^{*}, \  \ J'(u)=0\right\}
    \] is compact.
    By the property of the genus (see e.g. \cite[Proposition 5.4]{Struwe}), there exists a closed symmetric neighborhood $Z$ of $K_{b^{*}}$ such that $\gamma(Z)<+\infty$, then $b^{*}\notin D$.
    
    By the deformation lemma (see e.g. \cite[Theorem 3.11]{Struwe}), there exist $\varepsilon >0$ and an odd homeomorphism $\eta$ such that $\eta(1,J ^{b^{*}+\varepsilon}\setminus Z)\subset J ^{b^{*}-\varepsilon}$.
    Using properties (2), (3) and (5) of \cite[Proposition 5.4]{Struwe}, we get
    $$\gamma(J ^{b^{*}+\varepsilon})\leq \gamma(J ^{b^{*}+\varepsilon}\setminus Z)+\gamma(Z)\leq \gamma(J ^{b^{*}-\varepsilon})+\gamma(Z)<+\infty,$$
    which goes against the fact that $b^{*}$ is equals to $\inf D$. Therefore for all $c\in\mathbb{R}$ it has to be $\gamma(J ^{c})<+\infty$.
\end{proof}

The solutions will be obtained as critical points of a suitable energy functional. Rigorously this gives rise to what are called \textit{weak solutions}, in the sense that just integral (and not point-wise) equalities are satisfied. However, standard boot-strap arguments show that they are classical and that the equations are satisfied point-wise in $\Omega$. Let us see the proof for the solutions given in Theorem \ref{tma1.1}. The same happens for the solutions in Theorem~\ref{thm:main_theorem}.

We use the classical notation for H\"older spaces  $C^{j,\lambda}(\overline \Omega)$, $0 < \lambda \leq 1$.

Let $(\omega,u,\phi) \in \mathbb R\times B\times \mathbb H$ be a weak solution of \eqref{eq:P1}, then $\psi:=-a^{2}\Delta \phi +\phi$ is a weak solution of the Dirichlet problem
\begin{equation*}
     \left\{\begin{array}{rcll}
    -\Delta \psi  & = &q(x)u^{2}&\text{ in } \Omega,\\
    \psi & = & 0 &\text{ on }\partial \Omega.\end{array}\right.
\end{equation*}

Now, if $u\in H^{1}_{0}\left(\Omega\right)$, then $u\in L^{6}\left(\Omega\right)$ and  $u^{2}$ belongs to $L^{3}\left(\Omega\right)$. Thus, by \cite[Theorem~9.9]{Gilbarg1988} we have 
\begin{equation}
    -a^2 \Delta \phi + \phi = \psi \in W^{2,3}\left(\Omega\right).\label{A.1}
\end{equation}
Recall that $\Omega$ is a bounded set. If $\phi \in \mathbb{H}$ is a solution of \eqref{A.1} with $\psi \in W^{2,2}\left(\Omega\right)$, the interior regularity increases because  \cite[Theorem 8.10]{Gilbarg1988} implies that $\phi\in W^{4,2}\left(\Omega\right)$ and so $\phi \in C^{2,\lambda}(\overline{\Omega})$ with $\lambda\in (0,\frac{1}{2}]$  by the Sobolev embedding \cite[Theorem 5.4]{Adams2003}.

Now, we consider the  first equation in \eqref{eq:P1}
\[
	-\Delta{u} +q(x)\phi u+|u|^{p-2}u=\omega u \text{ in } \Omega.
\]
We have that $u \in H^{1}_{0}(\Omega)$ is the unique solution of $\Delta{u} =(q(x)\phi-\omega) u+|u|^{p-2}u\in L^{2}(\Omega)$ because  $\phi \in C^{2,\lambda}(\overline{\Omega})$. Then, by  \cite[Theorem 9.9]{Gilbarg1988} it holds
\[
	\Delta{u} =(q(x)\phi-\omega) u+|u|^{p-2}u\in H^{2}_{0}(\overline{\Omega}).
\]
Therefore \cite[Theorem 8.10]{Gilbarg1988} implies that $\phi\in H^{4}_{0}(\overline{\Omega})$ which leads us to the fact that  $u \in C^{2,\lambda}(\overline{\Omega})$ with $\lambda\in \left(0,{1}/{2}\right]$ by  \cite[Theorem 5.4, Part II]{Adams2003}. Since $u\in H^{1}_{0}(\Omega)$ and $u \in C^{2,\lambda}(\overline{\Omega})$, $\lambda\in \left(0,{1}/{2}\right]$, we obtain 
\[
	-\Delta \psi =q(x)u^{2} \in H^{2}(\Omega).
\]
By  \cite[Theorem 8.10]{Gilbarg1988} it follows that
\[
	-a^{2}\Delta\phi+\phi=\psi \in H^{4}(\Omega),
\]
and then the interior regularity of $\phi$ increases by the same  Theorem, i.e. $\phi\in H^{6}(\Omega)$. Finally again  by  the Sobolev embedding \cite[Theorem 5.4, Part II]{Adams2003} 
\[
	\phi \in H^{6}(\Omega)\hookrightarrow C^{4,\lambda}(\overline{\Omega}),
\] 
where $\lambda\in \left(0,{1}/{2}\right]$, which shows the regularity of $\phi$.

\section{Dirichlet boundary conditions}
\label{sec:Dir}

We say that the triple $(u,\omega,\phi)\in B\times \mathbb R\times \mathbb{H}$ is a weak solution of \eqref{eq:P1} under conditions \eqref{eq:b1}-\eqref{eq:normalization_condition} and \eqref{eq:b5}-\eqref{eq:b4}, if
\begin{equation}
\int_{\Omega}\nabla u \nabla v dx+\int_{\Omega} q(x)\phi u v dx - \int_{\Omega}|u|^{p-2}uvdx= \omega \int_{\Omega} u vdx,\hspace{3mm}
\forall  v \in H^{1}_{0}\left(\Omega\right)
\label{variacion1}
\end{equation}
and
\begin{equation}
\int_{\Omega}\Delta\phi\Delta \xi dx+\int_{\Omega}\nabla \phi \nabla \xi  dx= \int_{\Omega}q(x) u^{2}\xi dx,\hspace{3mm} \forall  \xi\in
\mathbb{H}.\label{variacion2}
\end{equation}

\subsection{The variational setting}

Let us set also in this case the variational framework. Define the (natural) functional on $H^{1}_{0}(\Omega)\times\mathbb H$ given by
\begin{align}
    F \left(u,\phi\right)
    ={ }&{ }\frac{1}{2}\int_{\Omega} \left|\nabla u\right|^{2}dx + \frac{1}{2}\int_{\Omega}q(x) \phi u^{2}dx \notag \\
    &{ }-\frac{1}{p}\int_{\Omega} |u|^{p}dx-\frac{1}{4}\int_{\Omega} \left|\Delta \phi \right|^{2}dx-\frac{1}{4}\int_{\Omega} \left|\nabla \phi\right|^{2}dx. \label{funcionalgeneralizado}
\end{align}\label{prop.2}
Straightforward computations show that  $F$ is $C^{1}$ with derivatives given by
\begin{equation}
\partial_{u}F(u,\phi)\left[v\right]=\int_{\Omega}\nabla u \nabla vdx+ \int_{\Omega}q(x)uv\phi dx-\int_{\Omega}|u|^{p-2}u vdx , \quad
\forall v\in H^{1}_{0}(\Omega)\label{eqvar2}
\end{equation}
\begin{equation}
\partial_{\phi} F(u,\phi)\left[\xi\right]=\frac12\int_{\Omega} q (x)u^{2}\xi dx
-\frac{1}{2}\int_{\Omega}\Delta \phi\Delta \xi dx
-\frac12\int_{\Omega}\nabla \phi\nabla \xi dx, \quad
\forall \xi\in \mathbb{H}.\label{eqvar1}
\end{equation}

We have a first variational principle on the differentiable manifold $B$ where the functional is restricted.

\begin{theorem}\label{th:firstVP}
    The triple $\left( u,\omega,\phi\right)\in  H^{1}_{0}(\Omega)\times \mathbb{R} \times \mathbb H$ is a weak solution of \eqref{eq:P1} under conditions \eqref{eq:b1}-\eqref{eq:normalization_condition} and \eqref{eq:b5}-\eqref{eq:b4}, if, and only if, $\left(u,\phi\right)$  is a critical point of $F$ restricted to $B\times \mathbb H$ having $\omega$ as a Lagrange multiplier.
\end{theorem}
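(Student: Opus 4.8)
The statement is an "if and only if" characterizing weak solutions as constrained critical points, so the plan is to unwind the Lagrange multiplier condition on $B\times\mathbb H$ and match it, component by component, against the weak formulation \eqref{variacion1}--\eqref{variacion2}. First I would recall the geometry of the constraint: the functional $F$ from \eqref{funcionalgeneralizado} is restricted to $B\times\mathbb H$, where $B$ is the $L^2$-sphere and $\mathbb H$ is a full linear space (hence unconstrained in the $\phi$ variable). Since $B=\{u:|u|_2^2=1\}$ is the level set of the smooth map $G(u)=\int_\Omega u^2\,dx$ with nonvanishing differential $G'(u)[v]=2\int_\Omega uv\,dx$ (nonzero for $u\neq0$, and $0\notin B$), the constraint $B\times\mathbb H$ is a $C^1$ Banach manifold and the Lagrange multiplier rule applies. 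Thus $(u,\phi)$ being a critical point of $F|_{B\times\mathbb H}$ means precisely that there is $\omega\in\mathbb R$ with
\[
    F'(u,\phi) = \omega\,\tfrac12 G'(u),
\]
read as an identity of bounded linear functionals on the tangent space $T_{(u,\phi)}(B\times\mathbb H)$.

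Next I would test this identity against directions of the two natural types. Taking variations $(v,0)$ with $v$ arbitrary in $H^1_0(\Omega)$ (the tangent constraint $\int_\Omega uv\,dx$ is absorbed into the multiplier term, so no restriction on $v$ is needed once $\omega$ is introduced) and using the expression \eqref{eqvar2} for $\partial_u F$, the stationarity condition becomes
\[
    \int_\Omega \nabla u\nabla v\,dx + \int_\Omega q(x)u v\phi\,dx - \int_\Omega|u|^{p-2}uv\,dx = \omega\int_\Omega uv\,dx,
\]
which is exactly \eqref{variacion1}. Then, taking variations $(0,\xi)$ with $\xi\in\mathbb H$ arbitrary (the $u$-constraint is silent here), the multiplier term drops out because $G$ does not depend on $\phi$, and using \eqref{eqvar1} the condition $\partial_\phi F(u,\phi)[\xi]=0$ reads
\[
    \tfrac12\int_\Omega q(x)u^2\xi\,dx - \tfrac12\int_\Omega\Delta\phi\Delta\xi\,dx - \tfrac12\int_\Omega\nabla\phi\nabla\xi\,dx = 0,
\]
and after clearing the factor $1/2$ this is precisely \eqref{variacion2}. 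Conversely, if \eqref{variacion1}--\eqref{variacion2} hold, then reading these back through \eqref{eqvar2} and \eqref{eqvar1} shows that $F'(u,\phi)$ annihilates the tangent space $T_{(u,\phi)}(B\times\mathbb H)$ with the given $\omega$, i.e. $(u,\phi)$ is a constrained critical point with $\omega$ as multiplier.

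The genuinely delicate point is not the formal algebra but the justification that the abstract Lagrange rule is applicable and that testing against the two coordinate families of directions is equivalent to the full stationarity on the tangent space. Concretely I would verify that $F\in C^1$ on $H^1_0(\Omega)\times\mathbb H$ (already asserted in the excerpt, with derivatives \eqref{eqvar2}--\eqref{eqvar1}), that the constraint functional $G$ is $C^1$ with surjective differential on $B$ so that $B\times\mathbb H$ is a codimension-one $C^1$ manifold, and that its tangent space splits as $T_{(u,\phi)}(B\times\mathbb H)=\{v:\int_\Omega uv\,dx=0\}\oplus\mathbb H$. With this splitting, stationarity on the tangent space together with the Lagrange rule gives the multiplier $\omega$ for the $u$-component while forcing the $\phi$-component equation to hold with no multiplier, exactly matching the asymmetry between \eqref{variacion1} and \eqref{variacion2}. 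Once the manifold structure and the tangent-space decomposition are in place, both implications follow by direct substitution, so the main obstacle is purely the clean setup of the constrained differential calculus rather than any hard estimate.
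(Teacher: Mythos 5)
Your proposal is correct and follows essentially the same route as the paper: both unwind the Lagrange multiplier condition $\partial_u F(u,\phi)=\omega u$, $\partial_\phi F(u,\phi)=0$ and match it, via the expressions \eqref{eqvar2}--\eqref{eqvar1}, against the weak formulation \eqref{variacion1}--\eqref{variacion2}. The only difference is one of detail: the paper states this equivalence in two lines, while you additionally verify the manifold structure of $B\times\mathbb H$ and the tangent-space splitting, which is a careful justification of the same argument rather than a different approach.
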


\begin{proof}
    The pair $\left(u, \phi\right)\in H^{1}_{0}(\Omega)\times \mathbb H$ is a critical point of  $F$ constrained to $B\times \mathbb H$ if and only if there exists a Lagrange multiplier $\omega \in \mathbb{R}$ such that
    $$\partial_{u}F(u,\phi) =\omega u\quad\text{and}\quad
    \partial_{\phi}F(u_,\phi)=0
    $$
    Taking into account  the expressions of the partial derivatives in \eqref{eqvar2} and \eqref{eqvar1} they are equivalent to \eqref{variacion1} and
    \eqref{variacion2},
    namely to say that
    $\left( \omega,u,\phi\right)\in \mathbb{R}\times H^{1}_{0}(\Omega)\times \mathbb H$ is a weak solution we were looking for.
\end{proof}

The functional $F$ in \eqref{funcionalgeneralizado} is unbounded both from above and below. Then the usual methods of Critical Point Theory cannot be directly applied. In order to deal with this issue, we shall reduce the functional in \eqref{funcionalgeneralizado} to the study of another functional depending on the single variable $u$, following a procedure introduced  by V. Benci and D. Fortunato in \cite{BenciFortunato1998} for these type of problems.

\begin{proposition}\label{prop:phia}
    For any  $u\in B$ the problem
     \begin{equation}
     \begin{cases}
    -\Delta \phi+ \Delta^{2}\phi  =q(x)  u^{2} & \mbox{ in } \Omega \\
     \Delta\phi=\phi=0  & \mbox{ on } \partial\Omega
     \label{eqsistem2}
     \end{cases}
    \end{equation}
    has a unique (and nontrivial) weak solution  $\Phi(u)\in \mathbb H$.
    Moreover it minimizes the functional
    \[
    	E(\phi)=\frac{1}{2}\int_{\Omega}|\nabla \phi|^{2}dx +\frac{1}{2}\int_{\Omega}|\Delta\phi|^{2}dx - \int_{\Omega} q(x)u^{2}\phi dx
    \]
    and $E(\Phi(u))<0$.
\end{proposition}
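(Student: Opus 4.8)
The plan is to prove existence and uniqueness by the direct method in the calculus of variations applied to the convex functional $E$, and then identify the minimizer with the weak solution of \eqref{eqsistem2}. First I would observe that $E$ is a functional on the Hilbert space $\mathbb H$ of the form $E(\phi)=\tfrac12\|\phi\|^2 - \ell(\phi)$, where $\|\cdot\|$ is exactly the norm on $\mathbb H$ introduced earlier and $\ell(\phi):=\int_\Omega q(x)u^2\phi\,dx$ is a bounded linear functional on $\mathbb H$. Boundedness of $\ell$ follows since $q\in C(\overline\Omega)$, $u\in B\subset H^1_0(\Omega)\hookrightarrow L^6(\Omega)$ so $u^2\in L^3(\Omega)$, and $\phi\in\mathbb H\hookrightarrow L^\infty(\Omega)$ (or at least into $L^{3/2}$ by Sobolev embedding), giving $|\ell(\phi)|\le c\,|q|_\infty\,|u^2|_{3}\,\|\phi\|$. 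Thus $E$ is a continuous, strictly convex, coercive functional on $\mathbb H$: coercivity is immediate from $E(\phi)\ge \tfrac12\|\phi\|^2 - c\|\phi\|\to+\infty$, and strict convexity comes from the quadratic leading term being a genuine norm.

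Next I would invoke the standard result that a coercive, (weakly) lower semicontinuous, strictly convex functional on a reflexive Banach space attains its infimum at a unique point $\Phi(u)$. Weak lower semicontinuity holds because $\phi\mapsto\tfrac12\|\phi\|^2$ is convex and continuous, hence weakly l.s.c., while $\ell$ is weakly continuous. Uniqueness is forced by strict convexity. Then I would compute the Euler–Lagrange equation: since $E$ is $C^1$ with
\[
E'(\phi)[\xi]=\int_\Omega \nabla\phi\nabla\xi\,dx+\int_\Omega\Delta\phi\Delta\xi\,dx-\int_\Omega q(x)u^2\xi\,dx,
\]
the minimizer satisfies $E'(\Phi(u))[\xi]=0$ for all $\xi\in\mathbb H$, which is precisely the weak formulation \eqref{variacion2} of the boundary value problem \eqref{eqsistem2}. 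Conversely any weak solution is a critical point of the convex $E$, hence the global minimizer, so the weak solution is unique and coincides with $\Phi(u)$.

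It remains to check nontriviality of $\Phi(u)$ and the sign $E(\Phi(u))<0$. For nontriviality, note $u\in B$ means $|u|_2=1$ so $u\not\equiv 0$, whence $u^2\not\equiv 0$; since $q\not\equiv 0$, the linear functional $\ell$ is not identically zero (one can test against a fixed $\xi$ with $\int q u^2\xi\neq0$, which exists because $qu^2$ is a nonzero $L^{3/2}$ function). If $\Phi(u)=0$ then $E'(0)[\xi]=-\ell(\xi)\equiv 0$, contradicting $\ell\neq 0$; hence $\Phi(u)\neq 0$. For the strict negativity, I would use the homogeneity trick: evaluate $E$ along the ray $t\Phi(u)$ and compare, or more directly test the Euler–Lagrange identity against $\xi=\Phi(u)$ to get $\|\Phi(u)\|^2=\ell(\Phi(u))$, and substitute back to obtain
\[
E(\Phi(u))=\tfrac12\|\Phi(u)\|^2-\|\Phi(u)\|^2=-\tfrac12\|\Phi(u)\|^2<0,
\]
the inequality being strict precisely because $\Phi(u)\neq0$.

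I expect no serious obstacle here; the functional is a textbook convex quadratic-plus-linear problem and the whole argument is routine once the mapping properties of $\mathbb H$ are in place. The only point demanding a little care is confirming that the bilinear form $(\phi,\psi)=\int_\Omega\Delta\phi\Delta\psi\,dx+\int_\Omega\nabla\phi\nabla\psi\,dx$ really is an inner product inducing a norm equivalent to the $H^2$ norm on $\mathbb H=H^1_0(\Omega)\cap H^2(\Omega)$ — i.e. that coercivity of the quadratic form is genuine — but this follows from elliptic regularity and the Poincaré inequality on the bounded smooth domain $\Omega$, and is implicitly built into the definition of $\|\cdot\|$ given in the preliminaries.
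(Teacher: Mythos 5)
Your proof is correct and reaches all the conclusions of the proposition, but by a genuinely different (if closely related) route than the paper. The paper dispatches existence and uniqueness in one stroke via the Riesz representation theorem: after checking, essentially as you do via H\"older and the Sobolev embedding, that $L_u[v]=\int_\Omega q(x)u^2v\,dx$ is a bounded linear functional on $\mathbb H$, the unique weak solution $\Phi(u)$ is by definition the Riesz representative of $L_u$ with respect to the scalar product $(\phi,\psi)=\int_\Omega\Delta\phi\Delta\psi\,dx+\int_\Omega\nabla\phi\nabla\psi\,dx$; the minimization claim is then waved off as ``standard'', with nontriviality justified by the remark that $E$ takes strictly negative values near the origin while $E(0)=0$. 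You instead run the direct method: coercivity, weak lower semicontinuity and strict convexity of $E(\phi)=\tfrac12\|\phi\|^2-\ell(\phi)$ yield a unique minimizer, and the Euler--Lagrange equation identifies it with the (unique) weak solution. These are two faces of the same Hilbert-space fact, but your version actually proves the ``moreover'' clause rather than asserting it, and your identity $E(\Phi(u))=-\tfrac12\|\Phi(u)\|^2$, obtained by testing the equation with $\xi=\Phi(u)$, is a clean quantitative form of $E(\Phi(u))<0$; it is precisely the identity \eqref{aform2} that the paper records immediately after the proposition. One shared caveat: both your nontriviality argument and the paper's rest on $\ell\not\equiv 0$, i.e.\ on $q(x)u^2\not\equiv 0$. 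Your parenthetical claim that $qu^2$ is a nonzero function because $q\not\equiv0$ and $u\not\equiv0$ does not follow in general --- $q$ could vanish identically on the support of $u$ --- so strictly speaking nontriviality and the strict inequality $E(\Phi(u))<0$ require the extra hypothesis $qu^2\not\equiv0$. Since the paper's own proof needs exactly the same assumption, this is a defect of the proposition as stated, not of your argument relative to the paper's.
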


\begin{proof}
    Given any $u\in B$, we define the linear functional
    \begin{equation*}
    L_{u}:v\in \mathbb{H}\longmapsto \int_{\Omega}q(x)u^{2}v dx\in\mathbb{R}
    \end{equation*}
    which is also continuous since by the H\"older inequality and the Sobolev embedding
     we have for $v\in \mathbb H$, and suitable constants $c,c'>0$
    \begin{equation}
    \left|\int_{\Omega}q(x)u^{2}v dx\right|\leq|q|_{\infty} |u|^{2}_{4}|v|_{2}\leq c' |u|^{2}_{4}|\nabla v|_{2}\leq c\left\|u\right\|^{2}\left\| v \right\|.
    \label{Rieszcontinuo}
    \end{equation}
    Then, the functional $L_{u}$ is continuous, and by Riesz Representation Theorem, there exists a unique element,
    that we denote with   $\Phi(u) \in \mathbb{H}$
     such that
    $$L_{u}\left[v\right]=\left(\Phi(u),v \right) =
    \int_{\Omega} \nabla\Phi(u)\nabla vdx + \int_{\Omega}\Delta\Phi(u)\Delta v dx,
     \quad \forall v \in \mathbb{H}.$$
    In other words,  $\Phi(u) \in \mathbb{H}$ is the unique weak solution of \eqref{eqsistem2} and satisfies
    \begin{equation}\int_{\Omega} q(x)u^{2} vdx = \int_{\Omega}\Delta \Phi(u) \Delta v dx+ \int_{\Omega}\nabla\Phi(u)  \nabla v dx, \quad  \forall v \in\mathbb{H} . \label{aform}
    \end{equation}
    Finally it is standard to see that $\Phi(u)$ is the unique minimizer of $E$ and is nontrivial since the functional $E$ achieves strictly negative values near the origin and $E(0) = 0$.
\end{proof}

By taking $v=\Phi(u)$ in \eqref{aform} we obtain
\begin{equation} \label{aform2}
    \int_{\Omega} q(x)u^{2} \Phi(u)dx =\int_{\Omega}|\Delta \Phi(u)|^{2} dx+ \int_{\Omega}
    |\nabla\Phi(u) |^{2} dx=\|\Phi(u)\|^{2}.
\end{equation}
Due to  \eqref{Rieszcontinuo}, we have
\begin{equation}\label{eq:stima0}
     \int_{\Omega} q(x)u^{2} \Phi(u)dx
     \leq  c\|u\|^{2}\|\Phi (u)\|.
\end{equation}
As a consequence of  \eqref{aform2}  and \eqref{eq:stima0}we have the estimate
\begin{equation}\label{eq:stima}
    \|\Phi(u)\| \leq c\|u\|^{2}.
\end{equation}
Set now
\[
	\Gamma:=\left\{\left(u,\phi\right)\in H^{1}_{0}\left(\Omega\right)\times \mathbb{H}: \partial_{\phi} F\left(u,\phi\right)=0\right\}.
\]
Take the level set $B=\left\{u\in H^{1}_{0}(\Omega): |u|_{2}=1\right\}$ and define the map
\[
	\Phi: u\in B \longmapsto \Phi(u) \in \mathbb{H}
\]
where $\Phi(u)$ is the unique solution given in Proposition \ref{prop:phia}. Actually
\[
	\Phi(u)=(-\Delta+\Delta^{2})^{-1}(q(x)u^{2}),
\]
where $(-\Delta+\Delta^{2})^{-1}:\mathbb H'\to \mathbb H$  is the Riesz isomorphism.

\begin{proposition} \label{prop3}
    The map $\Phi$ is $C^{1}$ and $\Gamma$ is its  graph.
\end{proposition}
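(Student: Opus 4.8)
The plan is to prove the two assertions in turn, treating the graph statement first (it is essentially a restatement of the uniqueness in Proposition~\ref{prop:phia}) and then the $C^1$ regularity, which is the only part requiring genuine work. I note at the outset that, although $\Phi$ was introduced on $B$, the Riesz construction of Proposition~\ref{prop:phia} applies verbatim to every $u\in H^1_0(\Omega)$, since the continuity of $L_u$ in \eqref{Rieszcontinuo} uses only $u\in H^1_0(\Omega)$; thus $\Phi$ is naturally defined on all of $H^1_0(\Omega)$, and I work on that space, the restriction to $B$ being immediate.

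For the graph statement I would simply unwind the definitions. A pair $(u,\phi)$ belongs to $\Gamma$ precisely when $\partial_\phi F(u,\phi)[\xi]=0$ for every $\xi\in\mathbb H$; by the expression \eqref{eqvar1} this is the same as
\[
\int_\Omega \Delta\phi\,\Delta\xi\,dx+\int_\Omega\nabla\phi\,\nabla\xi\,dx=\int_\Omega q(x)u^2\xi\,dx \qquad \forall\,\xi\in\mathbb H,
\]
that is, $\phi$ is a weak solution of \eqref{eqsistem2}. Proposition~\ref{prop:phia} guarantees that such a solution exists, is unique, and equals $\Phi(u)$. Hence $(u,\phi)\in\Gamma$ if and only if $\phi=\Phi(u)$, which is exactly the assertion that $\Gamma$ is the graph of $\Phi$.

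For the regularity I would exploit the factorization already recorded before the statement, $\Phi(u)=(-\Delta+\Delta^2)^{-1}(q u^2)$. Writing $R:\mathbb H\to\mathbb H'$ for the Riesz isomorphism and $N:u\mapsto L_u$ for the nonlinear map with $L_u[v]=\int_\Omega q u^2 v\,dx$, one has $\Phi=R^{-1}\circ N$. Since $R^{-1}$ is a bounded linear operator, hence smooth, it suffices to show that $N:H^1_0(\Omega)\to\mathbb H'$ is of class $C^1$, and then $\Phi$ is $C^1$ as a composition. To do this I would guess the derivative by direct computation: the natural candidate for $DN(u)$ in the direction $w$ is the functional $v\mapsto 2\int_\Omega q\,u\,w\,v\,dx$. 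The required bounds come from H\"older's inequality and the Sobolev embedding $H^1_0(\Omega)\hookrightarrow L^6(\Omega)\hookrightarrow L^4(\Omega)$ in dimension three, exactly as in \eqref{Rieszcontinuo}:
\[
\left|\int_\Omega q\,u\,w\,v\,dx\right|\le |q|_\infty\,|u|_4\,|w|_4\,|v|_2\le c\,\|u\|\,\|w\|\,\|v\|,
\]
so the candidate is a bounded bilinear map into $\mathbb H'$. The remainder is then controlled by the algebraic identity $(u+w)^2-u^2-2uw=w^2$, which gives $\|N(u+w)-N(u)-DN(u)[w]\|_{\mathbb H'}\le c\|w\|^2=o(\|w\|)$ and hence Fr\'echet differentiability; the same bilinear estimate yields $\|DN(u_1)-DN(u_2)\|\le c\|u_1-u_2\|$, i.e. (Lipschitz) continuity of $u\mapsto DN(u)$. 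In fact $N$ is a continuous quadratic map and so $C^\infty$, but $C^1$ is all that is needed.

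The only genuinely analytic ingredient is the differentiability of the quadratic nonlinearity $N$, and there the point deserving care is the chain of H\"older and Sobolev estimates in dimension three; everything else — the identification of $\Gamma$ as a graph and the smoothness of the linear isomorphism $R^{-1}$ — is formal. I therefore expect the Sobolev bookkeeping to be the part that needs the most attention, although it is entirely routine.
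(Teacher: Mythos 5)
Your proposal is correct and follows essentially the same route as the paper: both factor $\Phi=(-\Delta+\Delta^{2})^{-1}\circ N$ with $N$ the quadratic map $u\mapsto q(x)u^{2}$ viewed in $\mathbb H'$, and both identify $\Gamma$ with the graph by unwinding $\partial_{\phi}F(u,\phi)=0$ via the uniqueness in Proposition~\ref{prop:phia}. The only difference is one of detail: the paper cites the $C^{1}$ regularity of $u\mapsto u^{2}$ from $H^{1}_{0}(\Omega)$ into $L^{3}(\Omega)\hookrightarrow\mathbb H'$ as standard, whereas you verify Fr\'echet differentiability directly with the remainder identity $(u+w)^{2}-u^{2}-2uw=w^{2}$, which is a legitimate (and self-contained) filling-in of what the paper calls ``easy to see''.
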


\begin{proof}
    By the Sobolev embedding, $H^{1}_{0}\left(\Omega \right)\hookrightarrow L^{6}\left(\Omega \right)$
     is continuous and it is easy to see that the map $u\mapsto u^{2}$ is $C^{1}$ from  $H^{1}_{0}\left(\Omega\right)$ into $L^{3}\left(\Omega\right)$ which is continuously embedded into $\mathbb{H}'$.
    Since the operator $(-\Delta+\Delta^{2})^{-1}$ is the Riesz isomorphism, it is also $C^{1}$. Therefore,
      the map $\Phi$ is also $C^{1}$, since is a composition of $C^{1}$ maps.
    
    Finally, the graph of $\Phi$ is
    \[
    	\textrm{Gr}(\Phi):=\left\{(u,\phi)\in M:\hspace{3mm} (-\Delta+\Delta^{2})^{-1}(q(x)u^{2})=\phi \right\}.
    \]
    Note that $\left(u,\phi\right)\in \textrm{Gr}(\Phi)$ means that $(-\Delta+\Delta^{2})\phi=q(x)u^{2}$ which is equivalent to say that
    $\partial_{\phi}F\left(u,\phi
    \right)=0$, which in turn is also equivalent to have  $\left(u,\phi \right)\in \Gamma$.
\end{proof}

Then  we can define the \textit{reduced functional }$J$ as follows
\begin{eqnarray*}
    J (u)&:=&F \left(u,\Phi(u)\right).
\end{eqnarray*}
From \eqref{aform2} we deduce
\begin{equation*}
    \frac{1}{4}\int_{\Omega}\left|\Delta \Phi (u)\right|^{2}dx+\frac{1}{2}\int_{\Omega} \left|\nabla \Phi (u)\right|^{2}dx=\frac{1}{2}\int_{\Omega}q(x)u^{2}\Phi (u)
    dx -\frac{1}{4}\int_{\Omega} \left|\Delta \Phi (u)\right|^{2}dx
\end{equation*}
and hence the  functional $J $ can be written as
\begin{align}
    J (u)
    &=\frac{1}{2}\int_{\Omega}\left|\nabla u\right|^{2}dx
        - \frac{1}{p} |u|^{p}_{p}
        + \frac{1}{4}\int_{\Omega}\left|\Delta \Phi (u)\right|^{2}dx
        + \frac{1}{2}\int_{\Omega}\left|\nabla \Phi (u)\right|^{2}dx
        - \frac{1}{4}\int_{\Omega}\left|\nabla \Phi (u)\right|^{2}dx \nonumber \\
    &= \frac{1}{2}\int_{\Omega}\left|\nabla u\right|^{2}dx
        -\frac{1}{p} |u|^{p}_{p}
        + \frac{1}{4}\int_{\Omega}\left|\Delta \Phi (u)\right|^{2}dx
        + \frac{1}{4}\int_{\Omega}\left|\nabla \Phi (u)\right|^{2}dx
        \label{eq7u}.
\end{align}

The functional $J $ is then
 bounded from below, even and, by Proposition \ref{prop3}, $C^{1}$.
Then, the Fr\'echet derivative of  $J $ at $u$ is given by
\begin{equation}
J '(u)=\partial_{u}F (u,\Phi (u))+\partial_{\phi}F (u,\Phi (u))\Phi '(u)
=\partial_{u}F (u,\Phi (u))
\label{derivtive3.30}
\end{equation}
as  a linear and continuous operator on $H^{1}_{0}(\Omega)$.
Taking into account \eqref{eqvar2}, we get
\begin{equation}\label{eq:J'}
    J '(u)[v] = \int_{\Omega}\nabla u \nabla vdx+\int_{\Omega} q(x)u v\Phi (u)dx
    -\int_{\Omega} |u|^{p-2}uvdx,
    \quad \forall v\in H^{1}_{0}(\Omega).
\end{equation}

From Theorem \ref{th:firstVP},  we are reduced to finding critical points $(u ,\phi )$ of $F $ on $B\times \mathbb H$ with the associated Lagrange multiplier $\omega $. The following is a second variational principle and describes the relation between critical points of  $F $ on $B\times \mathbb H$ and critical points of $J $ restricted to $B$.

\begin{proposition}\label{prop4}
     Let $\left(u , \phi \right)\in B\times \mathbb H $ and $ \omega  \in \mathbb{R}$.  The following statements are equivalent.
    \begin{enumerate}
    \item \label{a-prop4} The pair $\left(u ,\phi \right)$ is a critical point of $F $ constrained to
    $B\times \mathbb H$ having $\omega $ as Lagrange multiplier.
    \item  \label{b-prop4} The function $u $ is a critical point of $J $ constrained to $B$ having $\omega $ as Lagrange multiplier and
    $\phi =\Phi (u ).$
    \end{enumerate}
\end{proposition}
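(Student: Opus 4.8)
The plan is to prove the two implications by unpacking what \emph{critical point with Lagrange multiplier} means for each of the two constrained problems and then matching the resulting conditions through the reduction already set up. Since the constraint $B\times\mathbb H$ restricts only the first variable to the $L^{2}$-sphere while leaving $\phi$ free in the linear space $\mathbb H$, the tangent space at $(u,\phi)$ is $\mathrm T_{u}B\times\mathbb H$, and the criticality condition in statement \ref{a-prop4} splits into the two equations $\partial_{u}F(u,\phi)=\omega u$ (the Lagrange condition on the tangent space to $B$, where $\omega u$ denotes the functional $v\mapsto\omega\int_{\Omega}uv\,dx$) together with the unconstrained equation $\partial_{\phi}F(u,\phi)=0$.

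First I would prove \ref{a-prop4}$\Rightarrow$\ref{b-prop4}. Assuming $(u,\phi)$ is a critical point of $F$ on $B\times\mathbb H$ with multiplier $\omega$, the $\phi$-equation $\partial_{\phi}F(u,\phi)=0$ together with Proposition \ref{prop:phia} forces $\phi=\Phi(u)$, since $\Phi(u)$ is the unique element of $\mathbb H$ annihilating $\partial_{\phi}F(u,\cdot)$. Substituting this into the $u$-equation and invoking the chain-rule identity \eqref{derivtive3.30}, whose simplification rests precisely on $\partial_{\phi}F(u,\Phi(u))=0$, gives $J'(u)=\partial_{u}F(u,\Phi(u))=\partial_{u}F(u,\phi)=\omega u$, so $u$ is a critical point of $J$ constrained to $B$ with the same multiplier $\omega$.

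For the converse \ref{b-prop4}$\Rightarrow$\ref{a-prop4}, I would start from $\phi=\Phi(u)$, which by the characterization in Proposition \ref{prop:phia} yields $\partial_{\phi}F(u,\phi)=0$, so the $\phi$-component of the criticality condition in \ref{a-prop4} holds automatically. For the $u$-component, I would again use \eqref{derivtive3.30} to write $\partial_{u}F(u,\Phi(u))=J'(u)$; since $u$ is critical for $J$ on $B$ with multiplier $\omega$, this equals $\omega u$, whence $\partial_{u}F(u,\phi)=\omega u$. Both equations defining statement \ref{a-prop4} are then satisfied, closing the equivalence.

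The argument is essentially mechanical once the reduction along the graph of $\Phi$ is in place, and I expect the only point requiring genuine care to be the bookkeeping of the Lagrange multiplier: one must keep track that $\omega$ couples only to the $u$-variable, because the $L^{2}$ normalization is imposed on $u$ alone, and that no multiplier enters the $\phi$-equation. This asymmetry is exactly what allows the reduction to transfer the same $\omega$ unchanged between the two problems, and it is the place where a careless identification of gradients and functionals could introduce an error.
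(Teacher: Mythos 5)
Your proof is correct and follows essentially the same route as the paper: both directions rest on the equivalence $\partial_{\phi}F(u,\phi)=0\iff\phi=\Phi(u)$ (which you obtain from the uniqueness in Proposition \ref{prop:phia}, while the paper cites the graph characterization in Proposition \ref{prop3} --- the same fact) combined with the reduction identity \eqref{derivtive3.30} to transfer the multiplier equation $\omega u$ between $\partial_{u}F$ and $J'$. No gaps; your closing remark about the multiplier coupling only to the $u$-variable is exactly the bookkeeping the paper's argument implicitly uses.
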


\begin{proof}
    Condition \eqref{a-prop4} means that
    $$
        \partial_{u}F (u ,\phi )=\omega u \quad \text{and}\quad \partial_{\phi}F (u ,\phi )=0.
    $$
    It follows from Proposition \ref{prop3}  that $\phi =\Phi (u )$ and by \eqref{derivtive3.30}, $J' (u )=\omega u .$ This is exactly \eqref{b-prop4}. On the other hand, \eqref{b-prop4} implies
    \[
    	J' (u ) = \omega  u  \quad \text{and}\quad (u ,\Phi (u ))\in \textrm{Gr}(\Phi )
    \]
    and then $\partial_{\phi}F (u ,\phi )=0$. Consequently, again by \eqref{derivtive3.30}, we infer
    \[
    	\omega u  = J' (u ) = \partial_{u}F (u ,\Phi (u ))
    \]
    so \eqref{a-prop4} is proved.
\end{proof}

In particular the above result says that all the solutions are of type $( u, \omega , \Phi (u))$.

In view of the previous result, for brevity we may refer just to the unknown  $u$ as a solution of the system ($\omega$ and $\phi$ are then univocally determined), and $J $ to its energy.

The compactness condition is satisfied as we show now.

\begin{lemma} \label{condPS}
    The functional $J $  constrained to $B$ satisfies the Palais-Smale condition.
\end{lemma}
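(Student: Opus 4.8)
The plan is to verify the two defining requirements of the Palais--Smale condition. Fix a sequence $\{u_n\}\subset B$ with $\{J(u_n)\}$ bounded and $J'(u_n)\to 0$ in $\mathrm{T}_{u_n}B$, and produce a subsequence converging in $H^1_0(\Omega)$. First I would establish boundedness of $\{u_n\}$. Using the reduced expression \eqref{eq7u} and discarding the nonnegative term $\tfrac14\|\Phi(u_n)\|^2$ one has $J(u_n)\ge \tfrac12\|u_n\|^2-\tfrac1p|u_n|_p^p$. Since $|u_n|_2=1$, the Gagliardo--Nirenberg inequality gives $|u_n|_p^p\le c\|u_n\|^{ap}$ with $ap=\tfrac{3(p-2)}{2}$, and the hypothesis $p<10/3$ is exactly what makes $ap<2$. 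Hence $\tfrac12\|u_n\|^2-\tfrac{c}{p}\|u_n\|^{ap}\le J(u_n)$ is bounded, which forces $\{\|u_n\|\}$ to be bounded. Passing to a subsequence, $u_n\rightharpoonup u$ in $H^1_0(\Omega)$ and, by the compact embeddings $H^1_0(\Omega)\hookrightarrow\hookrightarrow L^q(\Omega)$ for $q\in[1,6)$, $u_n\to u$ strongly in $L^2$ and $L^p$; in particular $|u|_2=1$, so $u\in B$.

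Next I would make the Lagrange multipliers explicit. The condition $J'(u_n)\to 0$ in $\mathrm{T}_{u_n}B$ yields $\omega_n\in\mathbb R$ with $J'(u_n)=\omega_n(u_n,\cdot)_{L^2}+r_n$, where $r_n\to 0$ in $H^{-1}(\Omega)$ and, testing with $u_n$, $\omega_n=J'(u_n)[u_n]+o(1)$. From \eqref{eq:J'} together with \eqref{aform2} one gets $J'(u_n)[u_n]=\|u_n\|^2+\|\Phi(u_n)\|^2-|u_n|_p^p$, which is bounded by the previous step and the estimate \eqref{eq:stima}; hence $\{\omega_n\}$ is bounded, and along a further subsequence $\omega_n\to\omega$.

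Finally I would prove strong convergence by testing the relation $J'(u_n)[v]-\omega_n\int_\Omega u_n v\,dx=r_n[v]=o(1)$ with $v=u_n-u$. Since $u_n-u\rightharpoonup 0$ and $u_n\to u$ in $L^2$, the quantity $r_n[u_n-u]$ and the term $\omega_n\int_\Omega u_n(u_n-u)\,dx$ both vanish, so $J'(u_n)[u_n-u]\to 0$. Expanding via \eqref{eq:J'}, the term $\int_\Omega|u_n|^{p-2}u_n(u_n-u)\,dx\to 0$ by H\"older and $u_n\to u$ in $L^p$, while the coupling term $\int_\Omega q\,u_n(u_n-u)\Phi(u_n)\,dx\to 0$ because $\Phi(u_n)$ is bounded in $\mathbb H\hookrightarrow L^\infty(\Omega)$ (Sobolev embedding in dimension $3$, using \eqref{eq:stima}) and $u_n\to u$ in $L^2$. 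What remains is $\int_\Omega\nabla u_n\cdot\nabla(u_n-u)\,dx\to 0$; combined with $\int_\Omega\nabla u_n\cdot\nabla u\,dx\to\|u\|^2$ from weak convergence, this gives $\|u_n\|\to\|u\|$, which together with $u_n\rightharpoonup u$ forces $u_n\to u$ strongly in $H^1_0(\Omega)$.

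The main obstacle is the boundedness step, where the mass-subcritical restriction $p<10/3$ is indispensable: it is precisely the inequality $ap<2$ that prevents the negative term $-\tfrac1p|u_n|_p^p$ from overwhelming $\tfrac12\|u_n\|^2$. The genuinely system-specific difficulty is controlling the nonlocal coupling term, but this is handled cleanly by the regularity gain $\Phi(u)\in\mathbb H\hookrightarrow L^\infty(\Omega)$ together with the a priori bound \eqref{eq:stima}, so that the coupling contributes only a factor converging to zero in $L^2$.
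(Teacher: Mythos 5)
Your proposal is correct, and its overall skeleton (boundedness of the sequence from $p<10/3$, boundedness of the Lagrange multipliers by testing with $u_n$, then strong $H^1_0$ convergence) matches the paper's; but the decisive compactness step is carried out by a genuinely different mechanism. The paper, after bounding $\{\lambda_n\}$, rewrites the Euler equation as \eqref{principal} by applying the inverse Riesz isomorphism $\Delta^{-1}:H^{-1}(\Omega)\to H^1_0(\Omega)$ and uses the \emph{compactness of $\Delta^{-1}$} on the $L^2$-bounded sequences $\{q(x)w_n\Phi(w_n)\}$ and $\{|w_n|^{p-2}w_n\}$ to exhibit $w_n$ directly as a sum of (subsequentially) convergent terms; no weak limit needs to be extracted beforehand. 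You instead extract the weak limit $u$, test $J'(u_n)-\omega_n(u_n,\cdot)_{L^2}=r_n$ against $u_n-u$, kill the lower-order terms by compact embeddings and the bound on $\Phi(u_n)$, and conclude $\|u_n\|\to\|u\|$, hence strong convergence by uniform convexity. Both routes are standard; yours is the more self-contained "weak convergence plus testing" argument requiring only compact Sobolev embeddings, while the paper's trades that for a single operator-theoretic fact and avoids the norm-convergence/Radon--Riesz step. Two further points in your favor: you make explicit, via Gagliardo--Nirenberg with exponent $\frac{3(p-2)}{2}<2$, the boundedness that the paper dispatches with the parenthetical "being $p<10/3$"; and your use of $\mathbb H\hookrightarrow L^\infty(\Omega)$ for the coupling term is legitimate in dimension $3$ (the paper uses only an $L^4$ bound on $\Phi(w_n)$, which is weaker but sufficient). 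One small imprecision: $r_n[u_n-u]\to 0$ holds because $r_n\to 0$ \emph{strongly} in $H^{-1}(\Omega)$ and $\{u_n-u\}$ is bounded in $H^1_0(\Omega)$, not because $u_n-u\rightharpoonup 0$; as written this is a misattribution of the reason, not a gap.
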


\begin{proof}
    Let $\{w_{n}\}\subset B$ be a Palais-Smale sequence for $J $. Then, there exist two sequences $\{\lambda_{n}\}\subset \mathbb{R}$ and
    $\{\varepsilon_{n}\}\subset H^{-1}(\Omega)$, where $H^{-1}(\Omega)$ is the dual space of $ H^{1}_{0}(\Omega)$, such that $\varepsilon_{n}\to
    0$
    and, see \eqref{eq7u},
    \begin{eqnarray}\label{eq.der.J.ps}
    J '(w_{n})=\lambda_{n}w_{n}+\varepsilon_{n}, \\
    J (w_{n}) =  \frac{1}{2}\|w_{n}\|^{2} + \frac{1}{4}\|\Phi (w_{n})\|^{2}
    -\frac{1}{p}\int_{\Omega} |w_{n}|^{p}dx\longrightarrow c. \nonumber\label{eq.der.J.ps2}
    \end{eqnarray}
    In particular $\{w_{n}\}$ and $\{\Phi (w_{n})\}$ are bounded respectively in $H^{1}_{0}(\Omega)$ (being $p<10/3$) and~$\mathbb H$.
    
    By \eqref{eq:J'} and \eqref{eq.der.J.ps}, after the usual integration by parts, we get
    \begin{equation*}
    \int_{\Omega}\left|\nabla w_{n}\right|^{2}dx+\int_{\Omega}q(x)w^{2}_{n}\Phi (w_{n})dx
    -\int_{\Omega}|w_{n}|^{p}dx
    = \lambda_{n}+ \langle\varepsilon_{n},w_{n}\rangle.
    \end{equation*}
    Using the boundedness of $\{w_{n}\}$, \eqref{eq:stima0} and the fact that $\varepsilon_{n}\to 0$, we see that also $\{\lambda_{n}\}$ has to be bounded.
    
     The equation \eqref{eq.der.J.ps} is rewritten as
    $-\Delta w_{n}+q(x)w_{n}\Phi (w_{n})-|w_{n}|^{p-2}w_{n}-\lambda _{n}w_{n}=\varepsilon_{n}$
    and applying the inverse Riesz isomorphism $\Delta^{-1}:H^{-1}(\Omega)\rightarrow H^{1}_{0}(\Omega)$, we obtain that
    \begin{equation}
     w_{n}=\Delta^{-1}\left(q(x)w_{n}\Phi (w_{n})\right)
     -\Delta^{-1}(|w_{n}|^{p-2}w_{n})-\lambda _{n}\Delta^{-1}w_{n}-\Delta^{-1}\varepsilon_{n},
    \label{principal}
    \end{equation}
    and $\{\Delta^{-1}\varepsilon_{n}\}$ is a convergent sequence.
    Now $\{q(x)w_{n}\Phi (w_{n})\}$ is bounded in $L^{2}(\Omega)$ due to  the estimates
    $$\int_{\Omega}|qw_{n}\Phi (w_{n})|^{2} dx \leq |q|^{2}_{\infty}|\Phi (w_{n})|_{4}^{2}|w_{n}|_{4}^{2}\leq c' \|\Phi (w_{n})\|^{2} |w_{n}|_{4}^{2},$$
    as well as $\{|w_{n}|^{p-2}w_{n}\}$, being $2(p-1)\in(2,14/3)$.
    Then these sequences are also bounded in $H^{-1}(\Omega)$. Actually since $\Delta^{-1}$ is compact, we deduce that (up to subsequences)
     $$\Big\{\Delta^{-1}\left(q(x) w_{n}\Phi (w_{n})\right)\Big\}, \  \
     \Big\{\Delta^{-1}\left(|w_{n}|^{p-2}w_{n}\right)\Big\}, \  \
     \Big\{\lambda _{n}\Delta^{-1}w_{n}\Big\}
     \quad \text{ are convergent.
     }
     $$
    Coming back to \eqref{principal}, we infer that $\{w_{n}\}$ is convergent (up to subsequences) in $H^{1}_{0}(\Omega)$, and the limit is of course in $B$.
\end{proof}

\subsection{Proof of the main result} \label{subsection3.6}

We show  that the functional $J $ restricted to $B$ has infinitely many critical points. Let $n$ be a positive integer. By Lemma~\ref{lem:sublevels_have_finite_genus}, there exists a positive integer $k=k(n)$ such that 
\[
	\gamma\left(J ^{n}\right)=k.
\]
Now, consider the collection
\begin{equation*}
    \mathcal{A}_{k+1}:=\left\{A\subset B: A \text{ is symmetric and closed with }\gamma\left(A\right)\geq k+1 \right\}.
\end{equation*}
By Lemma \ref{lemma7}, there exists a compact set $\textsc{K}\subset B$ such that $\textsc{K}\in\mathcal{A}_{k+1}$, then $\mathcal{A}_{k+1}\neq\emptyset.$

Since by the definitions
\[
	\gamma\left(A\right)>\gamma\left( J ^{n}\right), \text{ for all } A\in\mathcal{A}_{k+1},
\]
by the monotonicity property of genus $A\not\subset J ^{n}$, thus  
\[
	\sup J \left(A\right) >n, \text{ for all } A\in\mathcal{A}_{k+1}.
\]
Consequently
\[
	b_{n}:=\inf\left\{\sup J (A): A\in \mathcal{A}_{k+1} \right\}\geq n.
\]
We know by Lemma \ref{condPS} that $ J $ satisfies the Palais-Smale condition on $B$ and it is an even functional.
Then it follows from \cite[Theorem 5.7]{Struwe} that
\[
	b_{n} \text{ is a critical value of  $J $  on $B$}
\]
achieved on some $u_{n}\in B$. By the Lagrange Multipliers Theorem, for any $n\in \mathbb N$ there exist $\omega_{n}\in \mathbb{R}$ such that
\begin{equation*}
J '\left(u_{n}\right)=\omega_{n}u_{n} \quad\text{ with }\quad J \left(u_{n}\right)=b_{n}\geq n.\label{MLag}
\end{equation*}
Now evaluating $J '\left(u_{n}\right)=\omega_{n}u_{n}$ on the same $u_{n}$ we find
\begin{equation}
    \frac{1}{2}\int_{\Omega}\left|\nabla u_{n}\right|^{2}dx+\frac{1}{2}\int_{\Omega}q(x)\Phi (u_{n})u_{n}^{2}dx=\frac{1}{2}\omega_{n}.\label{eq53}
\end{equation}
In particular $\omega_{n}>0$. Replacing the above equation in the functional, which is given by
\begin{equation}\label{eq:fauan}
    J (u_{n}) = \frac{1}{2}\int_{\Omega}|\nabla u_{n}|^{2}dx
    +\frac{1}{4}\int_{\Omega} |\Delta \Phi (u_{n})|^{2}dx
    +\frac{1}{4}\int_{\Omega} |\nabla \Phi (u_{n})|^{2}dx,
\end{equation}
we have
\begin{equation*}
    b_{n}=J \left(u_{n}\right)=\frac{1}{2}\omega_{n}
    -\frac{1}{4}\int_{\Omega}\left|\Delta \Phi (u_{n})\right|^{2}dx-\frac{1}{4}\int_{\Omega}\left|\nabla \Phi (u_{n})\right|^{2}dx
    \label{eq54}
\end{equation*}
or
\begin{equation}\label{eq:omega}
    \omega_{n}=2b_{n}+\frac{1}{2}\int_{\Omega}\left|\Delta \Phi (u_{n})\right|^{2}dx+\frac{1}{2}\int_{\Omega}\left|\nabla
    \Phi (u_{n})\right|^{2}dx>2n
\end{equation}
which shows that $\omega_{n}\rightarrow \infty$ as $n\rightarrow \infty$. We note that \eqref{eq:omega} implies also that
\begin{equation*}
    \omega_{n}\geq \frac{1}{2}\|\Phi (u_{n})\|^{2} .
\end{equation*}

Recalling \eqref{eq:stima}, we rewrite \eqref{eq53} as
\begin{eqnarray*}
    \omega_{n}&=&\int_{\Omega}\left|\nabla u_{n}\right|^{2}dx
    +\int_{\Omega}\left|\Delta \Phi (u_{n})\right|^{2}dx + \int_{\Omega}\left|\nabla \Phi (u_{n})\right|^{2}dx\\
    &=& \left\|u_{n}\right\|^{2} +\|\Phi (u_{n})\| ^{2}\\
    &\leq& \left\|u_{n}\right\|^{2} + C\|u_{n}\|^{4}
\end{eqnarray*}
and then $\left\|u_{n}\right\|\to +\infty$ as $n\to+\infty.$

Summing up, we have found for any $n\in \mathbb N$:
$$
    u_{n}\in B\subset H^{1}_{0}(\Omega), \quad \phi_{n}:=\Phi (u_{n})\in \mathbb H, \quad \omega_{n}\in \mathbb R
$$
solutions as stated in Theorem \ref{tma1.1}. Furthermore the above computations provide the additional information and estimates on the norm of the solutions and the energy levels of the functional:
\begin{enumerate}
    \item $J (u_{n})=\frac12\|u_{n}\|^{2}+\frac14\|\phi_{n}\| ^{2}\geq n$, \smallskip
    \item $\omega_{n}=\|u_{n}\|^{2}+\|\phi_{n}\| ^{2}>2n$, \smallskip
    \item $\|\phi_{n}\|^{2}  \leq 2\omega_{n}$, \smallskip
    \item $\|\phi_{n}\| \leq C \|u_{n}\|^{2}.$
\end{enumerate}
It is well known that $u_{1}$ is the minimum of  $J $, for this reason we say that $(\omega_{1},u_{1}, \phi_{1})$ is a  ground state solution. Correspondingly, $b_{1}$ is the ground state level. Observe that since $J (|u|) = J (u)$, the ground state $u_{1}$ can be assumed positive.
The proof of Theorem \ref{tma1.1} is thereby complete.

\section{Neumann boundary conditions} \label{sec:aux}

\subsection{An auxiliary problem}

In order to deal with homogeneous boundary conditions, that will permit to write the functional in a simpler form, we make a change of variables.

Consider the following auxiliary problem (where $\alpha$ is defined in \eqref{eq:alfa})
$$
    \begin{cases}
     - \Delta \chi+\Delta^2 \chi = \alpha/|\Omega| & \text{ in } \ \Omega,\\
    \partial_{\mathbf n}{\chi} = h_1, \ \ \partial_{\mathbf n}\Delta\chi = h_2 \quad & \text{ on } \ \partial \Omega,  \\
    \displaystyle\int_{\Omega} \chi  dx = 0. \label{eq:chi4}
    \end{cases}
$$
It is easy to see it has a  solution, see e.g., \cite{Taylor}. The unicity is guaranteed due to the null-average condition.

Indeed, let $\chi_1, \chi_2$ be two solutions of the above system. Then $w = \chi_1 - \chi_2$ satisfies
$$
    -\Delta w +\Delta^{2}w= 0 \ \text{ in } \Omega, \qquad \partial_{\mathbf n}w=
    \partial_{\mathbf n}\Delta w = 0 \ \text{ on } \partial \Omega.
$$
By multiplying by $w$ and integrating by parts we have
$$
    \int_{\Omega} |\nabla w|^2   dx + \int_{ \Omega} |\Delta w|^{2}dx = 0
$$
hence $\nabla w = 0$ and thus $w$ is a constant. Hence, there is uniqueness up to a constant but only one of them has null integral; this implies $\chi_{1} = \chi_{2}$.

The change of variables we make is
$$
    \varphi = \phi - \chi - \mu, \quad \text{where }\ \ \mu = \frac{1}{|\Omega|}\int_{\Omega} \phi \, dx.
$$
In the new variables $(u, \omega, \varphi, \mu)$ our problem \eqref{eq:P1} with conditions \eqref{eq:b1}-\eqref{eq:normalization_condition} and \eqref{eq:b2}-\eqref{eq:b3}, can be rewritten as
\begin{equation}\label{eq:P}
    \begin{cases}
    - \Delta u + q(x)(\chi + \varphi)u -  |u|^{p - 2}u  = \omega u - \mu q(x) u & \text{ in } \ \Omega,\\
     - \Delta \varphi+\Delta^2 \varphi  =  q(x)u^2 - \alpha/|\Omega| & \text{ in }  \Omega,   \\
    u  =  0 & \text{ on } \partial \Omega, \\
    \displaystyle\int_{\Omega} u^2  dx  =  1,   \\
    \partial_{\mathbf n}\varphi  =  \partial_{\mathbf n}\Delta \varphi  = 0 & \text{ on } \partial \Omega,   \\
    \displaystyle\int_{\Omega} \varphi  dx  =  0
    \end{cases}
\end{equation}
and the compatibility condition between the second equation in the system and the boundary conditions on $\varphi$ reads as
$$
    \int_{\Omega} q(x)u^2 dx = \alpha.
$$

In order to formulate the variational problem it is necessary to introduce the set where the functional will be defined. As we will see, in this case it is not simply the $L^{2}$ sphere $B$.

\subsection{The manifold $M$} \label{sec:M}

We introduce the following sets
\begin{align*}
    B & := \left\{ u \in H_0^1(\Omega) \ : \ \int_{\Omega} u^2  dx = 1 \right\}, \\
    N & := \left\{u \in H_0^1(\Omega) \ : \ \int_{\Omega} q(x)u^2  dx = \alpha \right\}, \\
    M & := B \cap N
\end{align*}
and observe that if the problem has a solution, then, by definition, $M \neq \emptyset$. Hence we easily have
\begin{equation}
    \label{eq:range_for_alpha}
    q_{\min} \leq \alpha \leq q_{\max}
\end{equation}
where
$$
    q_{\min} = \inf_\Omega q \quad \text{ and } \quad q_{\max} = \sup_\Omega q.
$$
Indeed, if  $\alpha < q_{\min}$, then
$$
    \alpha = \int_{\Omega} q(x)u^2  dx \geq  q_{\min} > \alpha,
$$
which is a contradiction. Analogously for the case $\alpha > q_{\max}$. Moreover  from \eqref{eq:range_for_alpha} we deduce that $q^{-1}(\alpha)$ is not empty, and indeed what is important is  its Lebesgue measure. Let us see this in some details.

Suppose $\alpha = q_{\min}$ and $|q^{-1}(\alpha)| = 0$. Then
\begin{align*}
    \int_{\Omega} q(x)u^2  dx &= \int_{\{ x\in \Omega : q(x)>\alpha\}} q (x)u^2  dx >\alpha,
\end{align*}
so $M=\emptyset$. If $\alpha = q_{\max}$ and $|q^{-1}(\alpha)| = 0$ we proceed in an analogous manner to conclude that $M$ is empty and so the problem has no solutions. Therefore, we arrive at the following necessary condition for the existence of solutions: either
\begin{equation}
    \label{eq:necessary_condition_1}
    q_{\min} < \alpha < q_{\max}
\end{equation}
or
\begin{equation}
    \label{eq:necessary_condition_2}
    |q^{-1}(\alpha)| \neq 0.
\end{equation}

We now state some properties of the set $M$, referring the reader to \cite{PisaniSiciliano2013} for the omitted proofs.

First note that $M$ is symmetric with respect to the origin: if $u \in M$, then $-u \in M$. This follows trivially from the definition of $M$. We also note that $M$ is weakly closed in $H_0^1 (\Omega)$; this is due to the compact embedding of $H^{1}_{0}(\Omega)$ into $L^2(\Omega)$ being $\Omega $ bounded.

However we have the problem to show that this set is not empty. We want to show that  condition \eqref{eq:necessary_condition_1} will guarantee this fact.  For this, we introduce the following notation. Let $A \subset \Omega$ be an open subset and define
$$
    B_A := = \left\{u \in H_0^1 (A) \ : \ \int_A u^2  dx = 1\right\}
$$
and
$$
    g_A: u \in B_A \mapsto \int_A q(x)u^2  dx \in \mathbb R.
$$

It is immediately seen that
\begin{equation*}
    \label{eq:inclusion_1}
    g_A(B_A) \subset [\inf_A q, \sup_A q].
\end{equation*}

\begin{lemma}
    The following inclusion holds:
    \begin{equation*}
        \label{eq:inclusion_2}
        (\inf_A q, \sup_A q) \subset \overline{g_A(B_A)}.
    \end{equation*}
\end{lemma}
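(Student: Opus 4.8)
The plan is to prove the slightly stronger statement that the \emph{open} interval is contained in $g_A(B_A)$ itself; since $g_A(B_A)\subset\overline{g_A(B_A)}$, this immediately yields the claimed inclusion. If $\inf_A q=\sup_A q$ the interval is empty and there is nothing to prove, so I assume $\inf_A q<\sup_A q$ throughout. The guiding idea is that $g_A$ is continuous on $B_A$ (for $u,v\in B_A$ one has $|g_A(u)-g_A(v)|\le|q|_\infty\,|u-v|_2\,(|u|_2+|v|_2)\le2|q|_\infty|u-v|_2$, and the $L^2$-norm is controlled by the $H^1_0$-norm), so it suffices to produce, for each prescribed value $t$, a continuous path in $B_A$ along which $g_A$ sweeps across $t$, and then invoke the intermediate value theorem.

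Fix $t\in(\inf_A q,\sup_A q)$ and choose $\varepsilon>0$ so small that $\inf_A q+\varepsilon<t<\sup_A q-\varepsilon$. First I would localize near the extrema of $q$. By the definition of the infimum there is a point $x_1\in A$ with $q(x_1)<\inf_A q+\varepsilon$; since $A$ is open and $q$ is continuous, there is a ball $B(x_1,r_1)\subset A$ on which $q<\inf_A q+\varepsilon$. Symmetrically there is a point $x_2\in A$ and a ball $B(x_2,r_2)\subset A$ on which $q>\sup_A q-\varepsilon$. Because $q(x_1)$ is close to $\inf_A q$ and $q(x_2)$ close to $\sup_A q$ while $\inf_A q<\sup_A q$, the points $x_1,x_2$ are distinct, so after shrinking the radii I may assume the two balls are disjoint. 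On each ball I pick $u_i\in C_c^\infty(B(x_i,r_i))\subset H^1_0(A)$ normalized so that $|u_i|_2=1$; then $u_i\in B_A$, and since $u_i$ is supported where the respective bound on $q$ holds, $g_A(u_1)<\inf_A q+\varepsilon<t$ and $g_A(u_2)>\sup_A q-\varepsilon>t$.

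Finally, using that $u_1$ and $u_2$ have disjoint supports (hence are $L^2$-orthogonal, each $L^2$-normalized), I consider the path
\[
    u_\theta:=\cos\theta\,u_1+\sin\theta\,u_2,\qquad \theta\in[0,\pi/2].
\]
A direct computation gives $|u_\theta|_2^2=\cos^2\theta+\sin^2\theta=1$, so $u_\theta\in B_A$ for every $\theta$; moreover, the cross term vanishes by orthogonality, whence $g_A(u_\theta)=\cos^2\theta\,g_A(u_1)+\sin^2\theta\,g_A(u_2)$ is continuous in $\theta$ with endpoint values $g_A(u_1)<t$ and $g_A(u_2)>t$. By the intermediate value theorem there is $\theta_0$ with $g_A(u_{\theta_0})=t$, so $t\in g_A(B_A)$. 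As $t$ was arbitrary in the open interval, this proves $(\inf_A q,\sup_A q)\subset g_A(B_A)\subset\overline{g_A(B_A)}$.

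The only genuinely delicate point is the construction of the two concentrated, disjointly supported, normalized functions: it relies on $A$ being open (so that balls fit inside) and on $q$ being continuous (so that the bound valid at $x_i$ transfers to a whole ball), together with the observation that $x_1\neq x_2$, which is exactly where the hypothesis $\inf_A q<\sup_A q$ enters. Everything else—continuity of $g_A$, the orthonormality that keeps $u_\theta$ on $B_A$, and the intermediate value argument—is routine.
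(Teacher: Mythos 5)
Your proof is correct, and it is worth noting that the paper itself does not actually prove this lemma: its ``proof'' consists of the citation \cite[Lemma 2.1]{PisaniSiciliano2013}, so your argument is a genuine self-contained alternative. Moreover, you prove something slightly stronger than what is stated: the open interval $(\inf_A q,\sup_A q)$ lies in $g_A(B_A)$ itself, not merely in its closure. The closure is only needed when the values of $g_A$ are produced as limits --- as in the cited reference, where one concentrates $L^2$-normalized bumps at a point $x_0\in A$ so that $g_A(u_n)\to q(x_0)$ and then uses connectedness of $B_A$ to fill in the interval --- whereas your two-bump interpolation reaches each target value $t$ exactly, with no limiting process. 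The mechanism you use (two $L^2$-normalized functions with disjoint supports, so that $u_\theta=\cos\theta\,u_1+\sin\theta\,u_2$ stays on the sphere and $g_A(u_\theta)=\cos^2\theta\,g_A(u_1)+\sin^2\theta\,g_A(u_2)$ is a convex combination) is exactly the device the paper exploits later, in Proposition \ref{thm:existence_of_sets_of_genus_k} and in the construction of $k$ disjointly supported functions in $M$, so your proof is very much in the spirit of the surrounding material. The details all check out: $x_1\neq x_2$ because $q(x_1)<\inf_A q+\varepsilon<\sup_A q-\varepsilon<q(x_2)$; the strict inequalities $g_A(u_1)<t<g_A(u_2)$ hold because each $u_i$ is supported where the corresponding bound on $q$ is valid and $|u_i|_2=1$; and the intermediate value theorem applies since $\theta\mapsto g_A(u_\theta)$ is continuous on $[0,\pi/2]$.
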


\begin{proof}
    See \cite[Lemma 2.1]{PisaniSiciliano2013}.
\end{proof}

Then  we can conclude the following:

\begin{proposition} \label{prop:M_is_not_empty}
    Let $A \subset \Omega$ be an open subset. If $\alpha \in (\inf_A q, \sup_A q)$ then there exists $u \in H_0^1(A)$ such that
    $$
    \int_A u^2  dx = 1 \quad \text{ and } \quad \int_A q(x)u^2  dx = \alpha.
    $$
\end{proposition}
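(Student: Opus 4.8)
The plan is to upgrade the density statement of the previous Lemma, namely $(\inf_A q, \sup_A q) \subset \overline{g_A(B_A)}$, to the genuine inclusion $(\inf_A q, \sup_A q) \subset g_A(B_A)$. Once this is achieved, the hypothesis $\alpha \in (\inf_A q, \sup_A q)$ immediately produces a $u \in B_A$ with $g_A(u) = \alpha$, which is exactly the pair of identities $\int_A u^2\,dx = 1$ and $\int_A q(x) u^2\,dx = \alpha$ in the statement. The tool that makes this upgrade possible is connectedness: I would show that $g_A(B_A)$ is an interval of $\mathbb R$, and then use the elementary fact that an interval whose closure contains a given open interval must itself contain that open interval.

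First I would check that $g_A$ is continuous on $B_A$ for the $H^1_0(A)$-norm. If $u_n \to u$ in $H^1_0(A)$, then $u_n \to u$ in $L^2(A)$ by the Sobolev embedding, and since $q \in C(\overline\Omega)$ is bounded we get $\int_A q(x) u_n^2\,dx \to \int_A q(x) u^2\,dx$; hence $g_A$ is continuous. Next I would argue that $B_A$ is connected. Since $A$ is a nonempty open set, $H^1_0(A)$ is infinite-dimensional, and the $L^2$-unit sphere $B_A$ inside it is path-connected: for $u,v \in B_A$ with $v \neq -u$, the normalized segment $t \mapsto ((1-t)u+tv)/|(1-t)u+tv|_2$ is well defined and stays in $B_A$, because $(1-t)u+tv = 0$ for some $t \in (0,1)$ would force $v = -u$; the antipodal case $v=-u$ is handled by first joining $u$ to some $w \in B_A$ linearly independent from $u$ (which exists by infinite-dimensionality) and then $w$ to $-u$, neither segment passing through the origin in $L^2$. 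Consequently $g_A(B_A)$, being the continuous image of a connected set, is a connected subset of $\mathbb R$, i.e. an interval.

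Finally I would combine the two inclusions $g_A(B_A) \subset [\inf_A q, \sup_A q]$ and $(\inf_A q, \sup_A q) \subset \overline{g_A(B_A)}$. Writing $I := g_A(B_A)$, $m := \inf_A q$ and $M := \sup_A q$, the first inclusion gives $\overline{I} \subset [m,M]$ and the second gives $(m,M) \subset \overline{I}$, whence $\overline{I} = [m,M]$; since $I$ is an interval with this closure, necessarily $(m,M) \subset I$. Equivalently, for $\alpha \in (m,M)$ one can use $\alpha \in \overline{I}$ together with the density of $I$ near points of $(m,M)$ lying on each side of $\alpha$ to select $a,b \in I$ with $a < \alpha < b$, and then $\alpha \in [a,b] \subset I$ because $I$ is an interval. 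In either reading $\alpha \in g_A(B_A)$, which completes the proof.

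The only genuinely delicate point is the connectedness of $B_A$, and in particular the antipodal case, where the infinite-dimensionality of $H^1_0(A)$ is what prevents the normalized segment from being forced through the origin; once connectedness is in hand, the continuity of $g_A$ and the fact about intervals are routine.
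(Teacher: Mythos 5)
Your proof is correct and follows essentially the same route as the paper: the paper states the density lemma $(\inf_A q,\sup_A q)\subset \overline{g_A(B_A)}$ and then simply asserts the proposition, deferring the upgrade from density to exact attainment to the cited reference of Pisani and Siciliano, where it is done by an intermediate-value argument along paths of normalized combinations in the sphere. Your argument (continuity of $g_A$, path-connectedness of $B_A$ including the antipodal case, and the fact that a connected subset of $\mathbb R$ whose closure is $[\inf_A q,\sup_A q]$ must contain the open interval) is a correct and complete way of supplying exactly that missing step.
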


In particular by taking $A=\Omega$ we get
\begin{proposition}
    Assume that $\inf_\Omega q < \alpha < \sup_\Omega q$. Then $M$ is not empty.
\end{proposition}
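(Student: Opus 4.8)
The plan is to read off the nonemptiness of $M$ as the immediate specialization of Proposition \ref{prop:M_is_not_empty} to the case $A=\Omega$. Under the hypothesis $\inf_\Omega q < \alpha < \sup_\Omega q$ we have $\alpha \in (\inf_\Omega q,\sup_\Omega q)$, so Proposition \ref{prop:M_is_not_empty} produces a function $u \in H_0^1(\Omega)$ with $\int_\Omega u^2\,dx = 1$ and $\int_\Omega q(x)u^2\,dx = \alpha$. The first identity says $u \in B$ and the second says $u \in N$; hence $u \in B\cap N = M$, and $M$ is therefore nonempty. This also shows that the necessary condition \eqref{eq:necessary_condition_1} is in fact sufficient for $M\neq\emptyset$. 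Nothing further is required once Proposition \ref{prop:M_is_not_empty} is available.

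The genuine content therefore lies in Proposition \ref{prop:M_is_not_empty}, which I would obtain by upgrading the preceding Lemma from a statement about the closure $\overline{g_A(B_A)}$ to one about the image $g_A(B_A)$ itself. The key structural fact is that $B_A$, the $L^2$-unit sphere inside the infinite-dimensional space $H_0^1(A)$, is path-connected (given two points one connects them through normalized convex combinations, using a third point to bypass the antipodal case, which is possible precisely because $H_0^1(A)$ is infinite-dimensional), and that $g_A$ is continuous on $B_A$. Consequently $g_A(B_A)$ is a connected subset of $\mathbb R$, i.e.\ an interval $I$. Combining the elementary inclusion $I \subset [\inf_A q,\sup_A q]$ with the Lemma's inclusion $(\inf_A q,\sup_A q)\subset \overline{I}$, one checks that an interval whose closure contains the open interval must itself contain it: a point $y$ of the open interval lying outside $I$ would have to be an endpoint of $I$, yet $\overline I$ contains points of the open interval on both sides of $y$, which is impossible. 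Hence $(\inf_A q,\sup_A q)\subset g_A(B_A)$, so $\alpha$ is attained and yields the desired $u$.

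The main obstacle is exactly this passage from closure to image: the Lemma only guarantees that values arbitrarily close to $\alpha$ are attained, and without the intermediate-value argument powered by connectedness of $B_A$ one could not rule out that $\alpha$ is a boundary value missed by $g_A$. A secondary, routine point is the continuity of $g_A$ on $B_A$; since $q\in C(\overline\Omega)$ and the embedding $H_0^1(A)\hookrightarrow L^2(A)$ is continuous (indeed compact, $\Omega$ being bounded), the estimate $\int_A|u_n^2-u^2| \le |u_n-u|_2\,|u_n+u|_2$ shows $u \mapsto \int_A q\,u^2$ is continuous, so no difficulty arises there.
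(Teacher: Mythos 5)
Your proof is correct and follows essentially the paper's own route: the paper likewise obtains this proposition by specializing Proposition \ref{prop:M_is_not_empty} to $A=\Omega$, and for the substance (passing from the Lemma's closure inclusion $(\inf_A q,\sup_A q)\subset\overline{g_A(B_A)}$ to actual attainment of $\alpha$) it simply cites \cite{PisaniSiciliano2013}. Your path-connectedness of $B_A$ plus interval argument is a sound reconstruction of that deferred step, so nothing is missing.
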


\begin{proof}
    See \cite[Proposition 2.2]{PisaniSiciliano2013}.
\end{proof}

We state an important topological property of $M$:
it has subsets of arbitrarily large genus.

\begin{proposition} \label{thm:existence_of_sets_of_genus_k}
    Let $u_1, \ldots, u_k \in M$ be functions with disjoint supports and let
    $$
    V_k = \langle u_1, \ldots, u_k \rangle
    $$
    be the space spanned by $u_1, \ldots, u_k$. Then $M\cap V_k$ is the $(k-1)$-dimensional sphere, hence $\gamma(M \cap V_k) = k$.
\end{proposition}

The proof is given in \cite[Theorem 2.1]{PisaniSiciliano2007}. However we write it here for the reader convenience.

\begin{proof}
    Since the functions $u_1,\dots,u_k$ are linearly independent, the space $V_{k}$ has dimension $k$.
    Since  $B$ is the $L^{2}-$sphere in $H_{0}^{1}(\Omega)$,
    it is immediately seen that $B\cap V_{k}$ is just the unit sphere in $V_k$ with respect to the $L^2-$norm
    (see Lemma \ref{lemma7}).
    Let us prove that
    \[
    	M\cap V_{k}=B\cap V_{k}.
    \]
    
    It is obvious that $M\cap V_{k}\subset B\cap V_{k}$. On the other hand,
    if $u=\sum_{i=1}^{k}d_{i} u_{i}\in B\cap V_{k}$, then
    $$
    1=|u |_2^2 = \sum_{i=1}^{k}d_{i}^{2}.
    $$
    Therefore
    $$
    \int_{\Omega} q(x)u^{2}dx=\int_{\Omega} q(x)\sum_{i=1}^{k}d_{i}^{2}u_{i}^{2}dx=\sum_{i=1}^{k}d_{i}^{2}\int_{\Omega} q(x)u_{i}^{2}dx=
    \alpha \sum_{i=1}^{k}d_{i}^{2} =\alpha,
    $$
    that is $u\in M$.
\end{proof}

Now, it is natural if one raises the question if whether there exists such functions with disjoint supports for arbitrary $k$. The answer is affirmative under the assumption on $\alpha$ given in Theorem~\ref{thm:main_theorem}.

\begin{proposition}
    If \eqref{eq:necessary_condition_1} holds then for every $k \geq 2$ there exist $k$ functions $u_1, \ldots, u_k \in M$ with disjoint supports. Hence $\gamma(M) = + \infty$.
\end{proposition}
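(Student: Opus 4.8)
The plan is to reduce the entire statement to a construction of suitable open subsets of $\Omega$ on which Proposition~\ref{prop:M_is_not_empty} can be applied. Concretely, it suffices to produce, for every $k\geq 2$, pairwise disjoint open sets $A_1,\dots,A_k\subset\Omega$ with $\inf_{A_i}q<\alpha<\sup_{A_i}q$ for each $i$. Indeed, once these are available, Proposition~\ref{prop:M_is_not_empty} applied with $A=A_i$ (which is legitimate, since that statement requires no connectedness of $A$) yields for each $i$ a function $u_i\in H^1_0(A_i)$ with $\int_{A_i}u_i^2\,dx=1$ and $\int_{A_i}q\,u_i^2\,dx=\alpha$. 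Extending each $u_i$ by zero we obtain $u_i\in H^1_0(\Omega)$ with $\int_\Omega u_i^2\,dx=1$ and $\int_\Omega q\,u_i^2\,dx=\alpha$, that is $u_i\in M$; moreover $\operatorname{supp}u_i\subset\overline{A_i}$, so the $u_i$ have disjoint supports. Proposition~\ref{thm:existence_of_sets_of_genus_k} then gives $\gamma(M\cap V_k)=k$ with $V_k=\langle u_1,\dots,u_k\rangle$, and since $M\cap V_k\subset M$ the monotonicity of the genus yields $\gamma(M)\geq k$; as $k$ is arbitrary, $\gamma(M)=+\infty$.

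It remains to build the sets $A_i$, and this is exactly where assumption~\eqref{eq:necessary_condition_1} enters. Since $q$ is continuous on $\overline\Omega$ and $q_{\min}<\alpha<q_{\max}$, both
\[
U:=\{x\in\Omega:q(x)<\alpha\}\quad\text{and}\quad W:=\{x\in\Omega:q(x)>\alpha\}
\]
are nonempty and open. Being nonempty open subsets of $\mathbb R^3$, each contains $k$ pairwise disjoint open balls, say $D_1,\dots,D_k\subset U$ and $C_1,\dots,C_k\subset W$. Setting $A_i:=D_i\cup C_i$, the family $\{A_i\}_{i=1}^k$ is pairwise disjoint (the $D_i$ are mutually disjoint, the $C_i$ are mutually disjoint, and $U\cap W=\emptyset$), each $A_i$ is open, and since $A_i$ contains points where $q<\alpha$ and points where $q>\alpha$ we get $\inf_{A_i}q<\alpha<\sup_{A_i}q$, as required.

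The only genuine subtlety worth flagging is that one should resist taking the $A_i$ to be small neighborhoods of a single point where $q$ equals $\alpha$: under \eqref{eq:necessary_condition_1} alone the level set $q^{-1}(\alpha)$ may have positive measure, and there need not exist any point near which $q$ assumes values on both sides of $\alpha$. The disconnected choice $A_i=D_i\cup C_i$ circumvents this difficulty entirely, relying only on the nonemptiness and openness of $U$ and $W$; this is precisely why it is important that Proposition~\ref{prop:M_is_not_empty} be available for arbitrary, not necessarily connected, open sets $A$.
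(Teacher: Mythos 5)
Your proof is correct and follows essentially the same route as the paper: both construct the sets $A_i$ as unions of disjoint balls taken from the open nonempty sets $\{q<\alpha\}$ and $\{q>\alpha\}$, apply Proposition \ref{prop:M_is_not_empty} on each $A_i$, and extend by zero to obtain the disjointly supported functions in $M$. Your closing remark explaining why the disconnected choice $A_i=D_i\cup C_i$ is needed (rather than a neighborhood of a single point) is a useful clarification, but it does not change the substance of the argument.
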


\begin{proof}
    By \eqref{eq:necessary_condition_1}, the subsets
    \[
    	\Omega_{+}=\{x \in \Omega: q(x)>\alpha\}\,, \ \ \ \Omega_{-}=\{x\in \Omega: q(x)<\alpha\}
    \]
    are open and not empty. We can choose $2k$ disjoint balls, $\{Y_1, Y_2, \dots,Y_k\}\subset \Omega_{-}$ and
    $\{Z_1, Z_2 \dots,Z_k\}\subset \Omega_{+}$,
    then we set
    \[
    	A_i = Y_i \cup Z_i\,, \ \ \ i=1,...,k.
    \]
    It follows by construction that
    $$
    \inf q(A_i) < \alpha < \sup q(A_i).
    $$
    Therefore by the previous Proposition \ref{thm:existence_of_sets_of_genus_k} we find $u_i\in H^1_0(A_i)$ such that
    \[
    	\int_{\Omega} u_i^{2}\, dx=\int_{A_i} u_i^{2}\, dx=1\quad \text{and} \quad \int_{\Omega}qu_i^{2}\, dx=\int_{A_i} qu_i^{2}\, dx=\alpha
    \]
    (of course we identify every function in $H^1_0(A_i)$ with its trivial extension).
    All these functions $u_{1},u_{2},...,u_{k}\in M$ have disjoint supports.
\end{proof}

In order to use  classical variational methods, we explore the differential structure of $M$. Let
\begin{align*}
    & G_1 : u \in H_0^1 (\Omega) \mapsto \int_{\Omega} u^2  dx - 1 \in \mathbb R, \\
    & G_2 : u \in H_0^1(\Omega) \mapsto \int_{\Omega} q(x)u^2  dx - \alpha \in \mathbb R
\end{align*}
and
$$
G := (G_1, G_2)
$$
which is of class $C^{1}$. Then
$$
    M = \left\{u \in H_0^1(\Omega) \ : \ G_1(u) = G_2(u) = 0 \right\} = G^{-1}(0).
$$

Let us show, for the reader convenience, that $G_1'(u)$ and $G_2'(u)$ are linearly independent, so $G$ will be a submersion and $M$ a submanifold of codimension $2$.

\begin{proposition}
    Assume that $M$ is not empty.
    If
    \begin{equation}
    \label{eq:measure_is_zero}
    |q^{-1}(\alpha)| = 0,
    \end{equation}
    then the differentials $G_1'(u)$ and $G_2'(u)$ are linearly independent for every $u \in M$.
\end{proposition}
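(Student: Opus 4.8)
The plan is to argue by contradiction, exploiting the explicit form of the two differentials. First I would compute, for $v \in H_0^1(\Omega)$,
$$
G_1'(u)[v] = 2\int_\Omega u v\, dx, \qquad G_2'(u)[v] = 2\int_\Omega q(x) u v\, dx.
$$
Suppose $G_1'(u)$ and $G_2'(u)$ were linearly dependent. Then there would exist $(\lambda,\mu)\in\mathbb R^2\setminus\{(0,0)\}$ with $\lambda G_1'(u)+\mu G_2'(u)=0$, that is,
$$
\int_\Omega \bigl(\lambda+\mu q(x)\bigr) u\, v\, dx = 0 \qquad \forall\, v\in H_0^1(\Omega).
$$

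Next I would pass from this integral identity to a pointwise one. Since $q\in C(\overline\Omega)$ is bounded and $u\in L^2(\Omega)$, the function $(\lambda+\mu q)u$ lies in $L^2(\Omega)$; and because $H_0^1(\Omega)$ is dense in $L^2(\Omega)$, the vanishing of the integral against every $v$ forces
$$
\bigl(\lambda+\mu q(x)\bigr)u(x)=0 \quad \text{for a.e. } x\in\Omega.
$$
The set $\{u\neq 0\}$ has positive Lebesgue measure, since $u\in M$ gives $|u|_2=1$, hence $u\not\equiv 0$. On this set one must therefore have $\lambda+\mu q(x)=0$.

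Then I would rule out the two ways such a dependence could occur. If $\mu=0$, then $\lambda u=0$ a.e.\ forces $\lambda=0$ on $\{u\neq0\}$, contradicting $(\lambda,\mu)\neq(0,0)$. Hence $\mu\neq0$, and on $\{u\neq0\}$ we have $q(x)=-\lambda/\mu=:c$, a constant. The decisive step---the one I expect carries the real content---is to identify this constant with $\alpha$: integrating $q u^2$ and using that $q\equiv c$ wherever $u\neq0$,
$$
\alpha=\int_\Omega q(x)u^2\,dx=c\int_{\{u\neq0\}} u^2\,dx=c\int_\Omega u^2\,dx=c,
$$
where the last equality uses $u\in B$. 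Thus $q(x)=\alpha$ on the positive-measure set $\{u\neq0\}$, so $|q^{-1}(\alpha)|\geq|\{u\neq0\}|>0$, contradicting the hypothesis $|q^{-1}(\alpha)|=0$. This contradiction establishes that $G_1'(u)$ and $G_2'(u)$ are linearly independent. The routine parts are the computation of the differentials and the density argument; the genuine crux is the clean identification of the forced constant value of $q$ as $\alpha$ via the normalization $|u|_2=1$, which is exactly where the assumption $|q^{-1}(\alpha)|=0$ enters.
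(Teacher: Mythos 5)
Your proof is correct and follows essentially the same route as the paper's: both derive from a nontrivial vanishing linear combination the pointwise identity that forces $q\equiv\alpha$ on the positive-measure set $\{u\neq 0\}$, and then contradict $|q^{-1}(\alpha)|=0$ using $|u|_2=1$. The only difference is cosmetic — the paper first tests against $v=u$ to get $a+b\alpha=0$ and then concludes $u=0$ a.e., while you first pass to the pointwise identity and then identify the constant as $\alpha$ by integrating $q u^2$.
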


\begin{proof}
    Suppose that there are $a, b \in \mathbb R$ such that
    $$
    a G_1'(u) + bG_2'(u) = 0 \quad \text{ in } H^{-1} (\Omega)
    $$
    for some $u \in M$. Evaluating this expression in $u$, we find that $a + b \alpha = 0$. Then
    $$
    aG_1'(u)[v] + b G_2'(u)[v] = b \del{-\alpha \int_{\Omega} uv  dx + \int_{\Omega} q(x)uv  dx } = 0 \quad \forall v \in H_0^1(\Omega),
    $$
    that is,
    $$
    b \int_{\Omega} (q(x) - \alpha)uv  dx = 0 \quad \forall v \in H_0^1 (\Omega).
    $$
    If $b \neq 0$ then we would have $(q(x) - \alpha)u = 0$ a.e., and hence, in view of \eqref{eq:measure_is_zero}, $u = 0$, a contradiction. Thus
    $G_1'(u)$
    and $G_2'(u)$ are linearly independent for all $u \in M$.
\end{proof}

\subsection{The variational setting}

We now proceed to study the variational framework of the problem. Our aim is to construct a functional whose critical points on $M$ will be the weak solutions of the problem.

Following \cite{GazzolaGrunauSweers2010}, let
$$
    V = \left\{\xi \in H^2(\Omega) \ : \ \partial_{\mathbf n}\xi = 0 \text{ on } \partial \Omega \right\}
$$
which is  a closed subspace of $H^2(\Omega)$. Indeed, let $\{v_n\} \subset V$ such that $v_n \to v$ in $V$. Then $0 = \gamma_1(v_n) \to \gamma_1(v)$ and hence $\gamma_1(v) = 0$, where $\gamma_1$ denotes the trace operator which for smooth functions gives the directional derivative in the direction of the exterior normal on the boundary. Being a closed subspace, $V$ inherits the Hilbert space structure of $H^2(\Omega)$.

Recall that the change of variables was
$$
\varphi = \phi - \chi - \mu
$$
where
$$
\mu = \frac{1}{|\Omega|} \int_{\Omega} \phi  dx.
$$
In this way, we have $\overline \varphi = 0$, where from now on, we denote the average of $f$ with  $\overline f$. Consider then the following natural decomposition of $V$:
\begin{equation}
\label{eq:decomposition_of_V}
V = \widetilde V \oplus \mathbb R
\end{equation}
where
$$
\widetilde V = \left\{\eta \in V \ : \ \overline \eta = 0\right\}.
$$
On $\widetilde V$ (which is an Hilbert space being closed) we have the equivalent (squared) norm
$$
\|\eta\|_{\widetilde V}^{2} = \del{|\nabla \eta|_2^2 + |\Delta \eta|_2^2}.
$$

Consider the functional $F:H_0^1(\Omega) \times H^2 (\Omega)$ defined as follows:
\begin{align*}
F(u, \varphi) = & \frac12 \int_{\Omega} |\nabla u|^2  dx + \frac12 \int_{\Omega} q(x)(\varphi + \chi)u^2  dx - \frac{ 1}{p} \int_{\Omega} |u|^p  dx\\
& - \frac{1}{4} \int_{\Omega} (\Delta \varphi)^2  dx - \frac{1}{4} \int_{\Omega} |\nabla \varphi|^2  dx - \frac{\alpha}{2 |\Omega|} \int_{\Omega} \varphi dx
\end{align*}
which is easily seen to be of class $C^1$. For every  $u \in H_0^1(\Omega)$ and $\varphi \in H^2 (\Omega)$ we have the following partial derivatives:
\begin{align*}
F'_u(u, \varphi)[v] & = \int_{\Omega} \nabla u  \nabla v  dx + \int_{\Omega} q(x)(\varphi + \chi)uv  dx  -   \int_{\Omega} |u|^{p - 2}u v  dx, \\
F'_\varphi(u, \varphi)[\xi] &= \frac12 \int_{\Omega} q(x) \xi u^2  dx - \frac12 \int_{\Omega} \Delta \varphi \Delta \xi  dx - \frac12 \int_{\Omega} \nabla
\varphi
 \nabla \xi  dx - \frac{\alpha}{2|\Omega|}\int_{\Omega} \xi  dx
\end{align*}
for every $v \in H_0^1(\Omega)$ and $\xi \in H^2(\Omega)$.

Hence, $(u, \varphi, \omega, \mu)\in H^1_0(\Omega)\times H^2(\Omega)\times \mathbb R\times \mathbb R$ is a weak solution of \eqref{eq:P} if and only if
\begin{equation}\label{eq:defsol}
\begin{array}{ll}
& (u, \varphi) \in M \times \widetilde V, \\
& \forall v \in H_0^1 (\Omega):  \ F_u'(u, \varphi)[v] =\displaystyle \omega \int_{\Omega} uv  dx - \mu \int_{\Omega} q(x) u v  dx, \\
& \forall \xi \in V: \ F_\varphi'(u, \varphi)[\xi] = 0.
\end{array}
\end{equation}

\begin{theorem}
Let $(u, \varphi) \in H_0^1(\Omega) \times H^2 (\Omega)$. Then there exist $\omega, \mu \in \mathbb R$ such that $(u, \varphi, \omega, \mu)$ is a solution of \eqref{eq:P}
if and only if $(u, \varphi)$ is a critical point of $F$ constrained on $M \times
\widetilde V$, and in this case the real constants $\omega, \mu$ are the two Lagrange multipliers with respect to $F'_u$.
\end{theorem}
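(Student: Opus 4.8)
The plan is to apply the Lagrange multiplier rule on the product manifold $M \times \widetilde V$ and then to match the resulting Euler--Lagrange identities with the three conditions defining a weak solution in \eqref{eq:defsol}. Since $\widetilde V$ is a closed linear subspace, its tangent space at any point is $\widetilde V$ itself, while $M = G^{-1}(0)$ is a codimension-two submanifold whose tangent space at $u$ is $\ker G_1'(u) \cap \ker G_2'(u)$; the linear independence of $G_1'(u)$ and $G_2'(u)$ established in the preceding proposition is exactly what guarantees that the multiplier rule applies. Thus $(u,\varphi)$ is a constrained critical point of $F$ on $M \times \widetilde V$ if and only if there exist $\lambda_1,\lambda_2 \in \mathbb R$ with
\[
F_u'(u,\varphi) = \lambda_1 G_1'(u) + \lambda_2 G_2'(u) \ \text{ in } H^{-1}(\Omega), \qquad F_\varphi'(u,\varphi)[\xi]=0 \ \ \forall \xi \in \widetilde V.
\]

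First I would handle the $u$-component. Using $G_1'(u)[v] = 2\int_\Omega uv\,dx$ and $G_2'(u)[v] = 2\int_\Omega q(x)uv\,dx$, the first identity reads $F_u'(u,\varphi)[v] = 2\lambda_1 \int_\Omega uv\,dx + 2\lambda_2 \int_\Omega q(x)uv\,dx$ for every $v \in H_0^1(\Omega)$. Setting $\omega := 2\lambda_1$ and $\mu := -2\lambda_2$ turns this into exactly the second line of \eqref{eq:defsol} and identifies $\omega,\mu$ with the two Lagrange multipliers with respect to $F_u'$; since the step is reversible, the $u$-equation of a weak solution is equivalent to the multiplier condition on $M$.

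The only genuinely nontrivial point is to reconcile ``$F_\varphi'(u,\varphi)$ vanishes on $\widetilde V$'' (the constrained-criticality condition, as the tangent space of $\widetilde V$ is $\widetilde V$) with ``$F_\varphi'(u,\varphi)$ vanishes on all of $V$'' (the weak-solution condition). Here I would use the decomposition $V = \widetilde V \oplus \mathbb R$ of \eqref{eq:decomposition_of_V}, writing a general $\xi \in V$ as $\xi = \eta + c$ with $\eta \in \widetilde V$ and $c$ constant, and test $F_\varphi'(u,\varphi)$ on the constant $\xi \equiv 1$. Since $\Delta\xi = 0$ and $\nabla\xi = 0$, the expression collapses to
\[
\frac{1}{2}\int_\Omega q(x)u^2\,dx - \frac{\alpha}{2|\Omega|}\int_\Omega 1\,dx = \frac{1}{2}\Bigl(\int_\Omega q(x)u^2\,dx - \alpha\Bigr),
\]
which vanishes precisely because $u \in M$ forces $\int_\Omega q(x)u^2\,dx = \alpha$. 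Hence vanishing on $\widetilde V$ automatically upgrades to vanishing on $V$, and the two formulations coincide. I expect this compatibility check to be the crux of the proof: it is where the constraint $G_2(u)=0$ is used, and it explains why the variational problem must be posed on $M$ rather than on $B$ alone.

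Putting the pieces together, a weak solution of \eqref{eq:P} yields, by restricting the $\varphi$-equation from $V$ to $\widetilde V$ and reading the $u$-equation as a multiplier identity, a constrained critical point of $F$ on $M\times\widetilde V$; conversely a constrained critical point produces a weak solution through the two translations above, giving the stated equivalence together with the interpretation of $\omega,\mu$ as the Lagrange multipliers.
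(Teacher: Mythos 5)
Your proposal is correct and follows essentially the same route as the paper's proof: both reduce the equivalence to the Lagrange multiplier rule on $M\times\widetilde V$ and then upgrade the vanishing of $F_\varphi'(u,\varphi)$ from $\widetilde V$ to all of $V$ via the decomposition $V=\widetilde V\oplus\mathbb R$, using that $F_\varphi'(u,\varphi)$ kills constants precisely because $u\in M$ forces $\int_\Omega q(x)u^2\,dx=\alpha$. In fact you spell out two points the paper leaves implicit (the explicit computation of $F_\varphi'(u,\varphi)[1]$ and the role of the linear independence of $G_1'(u)$, $G_2'(u)$ in legitimizing the multiplier rule), so nothing is missing.
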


\begin{proof}
Note that the tangent space to $\widetilde V$ at $\varphi$ is $\widetilde V$ itself. Now, $(u, \varphi)$ is a critical point of $F$ constrained on $M \times \widetilde V$ if,
and only if
\begin{align*}
& \forall v \in T_uM:  \ F_u'(u, \varphi)[v] = 0, \\
& \forall \xi \in \widetilde V: \ F_\varphi'(u, \varphi)[\xi] = 0.
\end{align*}
Then $(u,\varphi)$ is a weak solution of \eqref{eq:P} according to \eqref{eq:defsol} and the Lagrange multipliers rule. So it is a constrained critical point for the functional $F$.

Suppose on the contrary that $(u, \varphi)$ is a constrained critical point. Then, again by the  Lagrange multipliers rule, we have that there exist $\omega, \mu\in \mathbb R$ such that
$$
\forall v\in H^1_0(\Omega): F_u'(u, \varphi)[v] = \omega \int_{\Omega} uv  dx - \mu \int_{\Omega} q(x)uv  dx.
$$
It remains to prove that $F_\varphi'(u, \varphi)[\xi] = 0$ for all $\xi \in V$. But this follows by the decomposition \eqref{eq:decomposition_of_V}, noticing that $F_\varphi'(u, \varphi)[r] = 0$ for every constant $r \in \mathbb R$. Then \eqref{eq:defsol} is satisfied and this concludes the proof.
\end{proof}

The functional $F$ constrained on $M \times \widetilde V$ is unbounded from above and from below. This issue has been addressed  for the first time in \cite{BenciFortunato1998} for a similar problem, and solved by following the steps below:

\begin{enumerate}
\item[(i)] for every fixed $u\in H^{1}_{0}(\Omega)$ there exists a unique $\Phi(u)$ such that $F'_\varphi(u,  \Phi(u)) = 0$;

\item[(ii)] the map $u \mapsto \Phi(u)$ is of class $C^1$;

\item[(iii)] the graph of $\Phi$ is a manifold, and we are reduced to study the functional $J(u) = F(u, \Phi(u))$, possibly constrained.
\end{enumerate}

However in our case we cannot argue exactly in the same way since the method sketched above fails. First of all, we see that $F_\varphi'(u, \varphi) = 0$ with $\varphi \in \widetilde V$ is equivalent to
$$
\begin{cases}
\Delta^2 \varphi - \Delta \varphi - q(x)u^2 + \alpha/|\Omega| = 0 & \text{ in } \ \Omega, \\
\partial_{\mathbf n} \varphi=
\partial_{\mathbf n}\Delta \varphi = 0 & \text{ on } \ \partial \Omega, \\
\displaystyle\int_{\Omega} \varphi  dx = 0
\end{cases}
$$
and such a problem does not always have a unique solution. In fact this happens if and only if $u \in N$, due to the compatibility condition. Secondly, since $N$ is not a manifold (unless $\alpha \neq 0$) we cannot require the map $\Phi: u \mapsto \Phi(u)$ to be of class $C^1$ in $N$. The idea is then to  extend such a map $\Phi$, possible in view of the next two results.

\begin{proposition}
\label{prop:auxiiary_problem_to_extend_Phi}
For every $w \in L^{6/5}(\Omega)$ there exists a unique $L(w) \in \widetilde V$ solution of
$$
\begin{cases}
- \Delta \varphi +\Delta^2 \varphi - q(x)w + \overline w = 0  & \text{ in } \ \Omega, \\
\partial_{\mathbf n}\varphi=
\partial_{\mathbf n}\Delta \varphi = 0 & \text{ on } \ \partial \Omega, \\
\displaystyle\int_{\Omega} \varphi  dx = 0
\end{cases}
$$
and the map $L: L^{6/5}(\Omega) \longrightarrow \widetilde V$ is linear and continuous, hence of class $C^{\infty}$.
\end{proposition}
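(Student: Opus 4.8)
The plan is to solve the problem by the Riesz representation theorem on the Hilbert space $\widetilde V$, exploiting the fact that the operator $-\Delta+\Delta^2$ together with the Neumann-type conditions $\partial_{\mathbf n}\varphi=\partial_{\mathbf n}\Delta\varphi=0$ generates precisely the bilinear form
$$
a(\varphi,\xi)=\int_\Omega \nabla\varphi\cdot\nabla\xi\,dx+\int_\Omega \Delta\varphi\,\Delta\xi\,dx,
$$
which is the scalar product inducing the equivalent norm $\|\cdot\|_{\widetilde V}$ introduced in the decomposition \eqref{eq:decomposition_of_V}. First I would write the weak formulation: $\varphi\in\widetilde V$ is a solution if
$$
a(\varphi,\xi)=\int_\Omega q(x)w\,\xi\,dx-\overline w\int_\Omega \xi\,dx\qquad\forall\,\xi\in\widetilde V.
$$
Since every $\xi\in\widetilde V$ has zero average, the term $\overline w\int_\Omega\xi\,dx$ vanishes, so the right-hand side reduces to the linear functional $\ell_w(\xi):=\int_\Omega q(x)w\,\xi\,dx$; the constant $\overline w$ serves only to fix the constant mode of the equation and does not affect solvability on $\widetilde V$. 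This is the exact analogue of the Benci--Fortunato reduction used in Proposition~\ref{prop:phia}, adapted to the Neumann setting.

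The next step is to check that $\ell_w$ is bounded on $\widetilde V$. Since $q\in C(\overline\Omega)$ we have $q\in L^\infty(\Omega)$, and by H\"older's inequality with conjugate exponents $6/5$ and $6$, together with the Sobolev embedding $\widetilde V\hookrightarrow H^2(\Omega)\hookrightarrow L^6(\Omega)$, we obtain
$$
|\ell_w(\xi)|\le |q|_\infty\,|w|_{6/5}\,|\xi|_6\le c\,|q|_\infty\,|w|_{6/5}\,\|\xi\|_{\widetilde V}.
$$
This is exactly where the hypothesis $w\in L^{6/5}(\Omega)$ enters, the exponent $6/5$ being the conjugate of the Sobolev exponent $6$. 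Thus $\ell_w\in(\widetilde V)'$, and since $a$ is the inner product of the Hilbert space $\widetilde V$ (so continuity and coercivity are immediate, being $a(\varphi,\varphi)=\|\varphi\|_{\widetilde V}^2$), the Riesz representation theorem yields a unique $L(w)\in\widetilde V$ with $a(L(w),\xi)=\ell_w(\xi)$ for all $\xi\in\widetilde V$. This gives at once existence and uniqueness.

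Finally I would read off linearity and continuity of $L$. Linearity is clear because $w\mapsto\ell_w$ is linear and the Riesz isomorphism is linear; for continuity, testing with $\xi=L(w)$ gives
$$
\|L(w)\|_{\widetilde V}^2=a(L(w),L(w))=\ell_w(L(w))\le c\,|q|_\infty\,|w|_{6/5}\,\|L(w)\|_{\widetilde V},
$$
hence $\|L(w)\|_{\widetilde V}\le c\,|q|_\infty\,|w|_{6/5}$, so $L$ is a bounded linear operator and therefore of class $C^\infty$. It then remains to verify that $L(w)$ actually solves the stated boundary value problem. The essential condition $\partial_{\mathbf n}\varphi=0$ is encoded in the ambient space $V\supset\widetilde V$, hence automatic; testing the weak identity against $\zeta\in C_c^\infty(\Omega)$, after subtracting its average so as to remain in $\widetilde V$, recovers the governing equation of the proposition in the interior (distributionally), its constant mode being fixed by the zero-average normalisation. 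Integrating $a(\varphi,\xi)$ by parts twice against a general $\xi\in V$ then leaves the single boundary integral $\int_{\partial\Omega}\partial_{\mathbf n}(\Delta\varphi)\,\xi\,ds$, whose forced vanishing is precisely the natural condition $\partial_{\mathbf n}\Delta\varphi=0$. I expect this last identification---keeping careful track of the two boundary integrals produced by integration by parts and of the constant mode tied to $\overline w$---to be the only delicate point, the existence, uniqueness and regularity of $L$ being otherwise an immediate consequence of the Hilbert-space structure of $\widetilde V$ already established.
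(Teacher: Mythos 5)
Your proof is correct and takes essentially the same route as the paper: write the weak formulation on $\widetilde V$ (where the $\overline w$ term drops out because test functions have zero average), observe that the bilinear form $a$ is precisely the scalar product of $\widetilde V$, and conclude existence, uniqueness, linearity and continuity of $L$ by the Riesz representation theorem. The additional details you supply — the H\"older/Sobolev estimate showing $\ell_w\in(\widetilde V)'$ and the recovery of the natural boundary conditions by integration by parts — are sound elaborations of steps the paper leaves implicit.
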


\begin{proof}
The weak solutions to the problem are functions $\varphi$ in the Hilbert space  $\widetilde V$ such that
$$
\int_{\Omega} \Delta \varphi \Delta v  dx + \int_{\Omega} \nabla \varphi  \nabla v  dx - \int_{\Omega} q(x)wv  dx = 0 \qquad \forall v \in \widetilde V,
$$
so the result follows by applying the Riesz Theorem, in fact the bilinear form $b: \widetilde V \times \widetilde V \to \mathbb R$ given by
$$
b(\varphi, v) = \int_{\Omega} \Delta \varphi \Delta v  dx + \int_{\Omega} \nabla \varphi  \nabla v  dx.
$$
is nothing but the  scalar product in $\widetilde V$.
\end{proof}

From well-known properties of Nemytsky operators (see e.g., \cite{AmbrMalc}) the following proposition holds.

\begin{proposition}
The map
$$
u \in L^6(\Omega) \mapsto q(x)u^2 \in L^{6/5}(\Omega)
$$
is of class $C^1$.
\end{proposition}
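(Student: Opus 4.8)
The plan is to prove the stronger statement that the Nemytsky operator $N(u):=q(x)u^2$ is Fréchet differentiable at every point with a (Lipschitz) continuous derivative; being a bounded quadratic map it is in fact $C^\infty$, but $C^1$ is all that is needed. First I would check that $N$ is well defined between the indicated spaces: if $u\in L^6(\Omega)$ then $u^2\in L^3(\Omega)$, and since $q\in C(\overline\Omega)$ is bounded we get $qu^2\in L^3(\Omega)$, which embeds continuously into $L^{6/5}(\Omega)$ because $\Omega$ is bounded and $3>6/5$. Hence $N:L^6(\Omega)\to L^{6/5}(\Omega)$ is well defined.

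The candidate derivative at $u$ is the linear map $v\mapsto 2q(x)uv$. To see that it lands in the right space and is bounded, I would use Hölder's inequality: for $u,v\in L^6(\Omega)$ one has $uv\in L^3(\Omega)$ with $|uv|_3\le |u|_6|v|_6$, so $|2quv|_{6/5}\le c\,|q|_\infty|uv|_3\le c\,|q|_\infty|u|_6|v|_6$, again using $L^3(\Omega)\hookrightarrow L^{6/5}(\Omega)$. Thus $v\mapsto 2quv$ belongs to $\mathcal L(L^6(\Omega),L^{6/5}(\Omega))$.

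The heart of the argument is the exact algebraic expansion exploiting that $N$ is quadratic. For $u,v\in L^6(\Omega)$,
\[
N(u+v)-N(u)-2q\,uv = q(u+v)^2 - qu^2 - 2quv = q\,v^2,
\]
so the remainder is precisely $qv^2$. Estimating it, $|qv^2|_{6/5}\le |q|_\infty\,|v^2|_{6/5} = |q|_\infty\,|v|_{12/5}^2 \le c\,|q|_\infty|v|_6^2$, where the last inequality is the bounded-domain embedding $L^6(\Omega)\hookrightarrow L^{12/5}(\Omega)$ (note $12/5<6$). Dividing by $|v|_6$ shows the remainder is $o(|v|_6)$, in fact $O(|v|_6^2)$, so $N$ is Fréchet differentiable with $N'(u)[v]=2quv$.

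Finally I would verify continuity of $u\mapsto N'(u)$. Since $N'(u)-N'(\tilde u)$ is multiplication by $2q(u-\tilde u)$, the same Hölder estimate gives $|2q(u-\tilde u)v|_{6/5}\le c\,|q|_\infty|u-\tilde u|_6|v|_6$ for every $v$, whence the operator norm satisfies $\|N'(u)-N'(\tilde u)\|\le c\,|q|_\infty|u-\tilde u|_6$. Thus $N'$ is Lipschitz continuous and $N\in C^1$. There is little genuine difficulty here; the only point requiring care is the bookkeeping of exponents so that each product lands in the prescribed target space $L^{6/5}(\Omega)$, together with the systematic use of the continuous embeddings between $L^p(\Omega)$ spaces valid because $\Omega$ is bounded. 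Alternatively, the statement is an immediate instance of the general differentiability theorem for Nemytsky operators in \cite{AmbrMalc}, applied to the Carathéodory function $f(x,s)=q(x)s^2$ with $\partial_s f(x,s)=2q(x)s$.
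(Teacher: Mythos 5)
Your proof is correct, and it takes a genuinely different route from the paper: the paper does not prove this proposition at all, it simply invokes the well-known differentiability properties of Nemytsky operators with a citation to \cite{AmbrMalc} --- which is precisely the alternative you mention in your closing sentence. The body of your argument is instead a self-contained elementary proof that exploits the quadratic structure of $N(u)=q(x)u^2$: the first-order expansion is exact, $N(u+v)-N(u)-2quv=qv^2$, so the remainder is controlled by $|q|_\infty|v|_{12/5}^2\le c\,|v|_6^2=o(|v|_6)$, and the derivative $u\mapsto N'(u)$, being multiplication by $2qu$, is Lipschitz in operator norm by the same H\"older estimate; your exponent bookkeeping (H\"older $|uv|_3\le|u|_6|v|_6$, plus the bounded-domain embeddings $L^3(\Omega)\hookrightarrow L^{6/5}(\Omega)$ and $L^6(\Omega)\hookrightarrow L^{12/5}(\Omega)$) is all correct. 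What your approach buys is transparency and independence from the general theory of superposition operators, and it even yields more than claimed (the map is $C^\infty$, with Lipschitz first derivative). What the paper's one-line citation buys is brevity and generality: the theorem in \cite{AmbrMalc} covers non-polynomial Carath\'eodory nonlinearities $f(x,s)$ with suitable growth, for which no exact algebraic expansion of the remainder is available.
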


As a consequence the  map
$$
\Phi: u \in H_0^1 (\Omega) \mapsto L(q(x)u^2) \in \widetilde V
$$
is well defined and $\Phi(u) = \Phi(-u) = \Phi(|u|)$. Moreover, for every $(u, \varphi) \in H_0^1(\Omega) \times \widetilde V$ we have that $\varphi = \Phi(u)$ if and only if for every $\eta \in \widetilde V$
\begin{equation*}
\int_{\Omega} \Delta \varphi \Delta \eta  dx + \int_{\Omega} \nabla \varphi \nabla \eta  dx= \int_{\Omega} q(x)u^2 \eta  dx.
\end{equation*}
By choosing in particular $\eta = \Phi(u)$ we have
\begin{equation}
\label{eq:identity_2}
\int_{\Omega} (\Delta \Phi(u))^2  dx + \int_{\Omega} |\nabla \Phi(u)|^2  dx =  \int_{\Omega} q(x)u^2 \Phi(u)dx.
\end{equation}
Consequently,
\begin{align*}
\|\Phi(u)\|_{\widetilde V}^2 & \leq |q|_\infty \int_{\Omega} u^2 \Phi(u)dx
 \leq c |u|_4^2 |\Phi(u)|_2
 \leq c |\nabla u|_2^2 |\nabla \Phi(u)|_2
 \leq c |\nabla u|_2^2 \|\Phi(u)\|_{\widetilde V}
\end{align*}
so that
\begin{equation}
\label{eq:boundedness_of_Phi}
\|\Phi(u)\|_{\widetilde V} \leq c |\nabla u|_2^2.
\end{equation}
This means that  $\Phi$ is a bounded nonlinear map, namely it sends  bounded sets on bounded sets. A further property is given here.

\begin{lemma}
\label{lem:compactness_of_Phi}
If $u_n \rightharpoonup u$ in $H_0^1(\Omega)$, then
$$
\int_{\Omega} q(x) u_n^2 \Phi(u_n) dx \to \int_{\Omega} q(x) u^2 \Phi(u) dx.
$$
Moreover, the map $\Phi$ is compact.
\end{lemma}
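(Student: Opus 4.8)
The plan is to exploit the factorization $\Phi = L \circ \mathcal N$, where $\mathcal N\colon u \mapsto q(x)u^2$ is the quadratic (Nemytsky) map and $L$ is the \emph{linear} continuous operator furnished by Proposition~\ref{prop:auxiiary_problem_to_extend_Phi}. Since $L$ is linear and continuous, it is in particular strong-to-strong continuous, so the whole difficulty is concentrated in showing that $\mathcal N$ is \emph{compact}, i.e.\ that it turns the weak convergence $u_n \rightharpoonup u$ in $H_0^1(\Omega)$ into the strong convergence $q(x) u_n^2 \to q(x) u^2$ in $L^{6/5}(\Omega)$. Once the strong convergence of $\Phi(u_n)$ in $\widetilde V$ is obtained, both assertions of the lemma follow at once.

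First I would invoke the Rellich--Kondrachov theorem: since $\Omega$ is bounded and $u_n \rightharpoonup u$ in $H_0^1(\Omega)$, the embedding $H_0^1(\Omega) \hookrightarrow L^s(\Omega)$ is compact for every $s \in [1,6)$, hence $u_n \to u$ strongly in $L^s(\Omega)$; in particular in $L^{12/5}(\Omega)$. Writing $u_n^2 - u^2 = (u_n - u)(u_n + u)$ and applying H\"older with the conjugate pair $(12/5, 12/5)$ (note that $\tfrac{5}{12} + \tfrac{5}{12} = \tfrac{5}{6}$), I obtain
\[
    |q(x) u_n^2 - q(x) u^2|_{6/5} \le |q|_\infty\, |u_n - u|_{12/5}\, |u_n + u|_{12/5}.
\]
The factor $|u_n + u|_{12/5}$ stays bounded while $|u_n - u|_{12/5} \to 0$, so $q(x) u_n^2 \to q(x) u^2$ in $L^{6/5}(\Omega)$. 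Applying the continuous operator $L$ then yields the strong convergence $\Phi(u_n) = L(q(x) u_n^2) \to L(q(x) u^2) = \Phi(u)$ in $\widetilde V$.

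This strong convergence delivers both claims. For the integral I would use the identity \eqref{eq:identity_2}, namely $\int_\Omega q(x) u^2 \Phi(u)\,dx = \|\Phi(u)\|_{\widetilde V}^2$, so that norm convergence in $\widetilde V$ gives
\[
    \int_\Omega q(x) u_n^2 \Phi(u_n)\,dx = \|\Phi(u_n)\|_{\widetilde V}^2 \longrightarrow \|\Phi(u)\|_{\widetilde V}^2 = \int_\Omega q(x) u^2 \Phi(u)\,dx;
\]
alternatively one may split the difference into $\int_\Omega q(x) u_n^2(\Phi(u_n)-\Phi(u))\,dx$ and $\int_\Omega (q(x)u_n^2 - q(x)u^2)\Phi(u)\,dx$ and bound each by H\"older, using the embedding $\widetilde V \hookrightarrow L^6(\Omega)$. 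For the compactness of $\Phi$, let $\{u_n\}$ be bounded in $H_0^1(\Omega)$; by reflexivity a subsequence satisfies $u_{n_k} \rightharpoonup u$, and the argument above produces $\Phi(u_{n_k}) \to \Phi(u)$ strongly in $\widetilde V$, so $\Phi$ maps bounded sets into precompact sets.

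The step I expect to be the crux is the second one: upgrading the weak $H_0^1$ convergence to the strong $L^{6/5}$ convergence of $q(x)u_n^2$. This is precisely where the subcritical character of the quadratic nonlinearity is used, through the compact embedding into $L^{12/5}(\Omega)$ with $12/5 < 6$; note that the embedding $H_0^1(\Omega) \hookrightarrow L^6(\Omega)$ itself is \emph{not} compact, which is why the direct factorization and the exponent $12/5$ are needed. By contrast, the boundedness estimate \eqref{eq:boundedness_of_Phi} alone would only yield weak convergence of $\Phi(u_n)$ in $\widetilde V$, which is insufficient to pass to the limit in the nonlinear term.
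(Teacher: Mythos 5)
Your proof is correct, and it reaches the conclusion by a genuinely more direct route than the paper, although both rest on the same compactness core. The shared ingredient is the step you correctly identify as the crux: by Rellich--Kondrachov, $u_n \to u$ in $L^{12/5}(\Omega)$, and writing $u_n^2 - u^2 = (u_n-u)(u_n+u)$ with H\"older gives $q(x)u_n^2 \to q(x)u^2$ in $L^{6/5}(\Omega)$ --- the paper performs exactly this computation. Where you diverge is in what happens next. You exploit the factorization $\Phi = L\circ \mathcal N$ and the continuity of the linear solution operator $L:L^{6/5}(\Omega)\to \widetilde V$ from Proposition \ref{prop:auxiiary_problem_to_extend_Phi}, which yields the strong convergence $\Phi(u_n)\to\Phi(u)$ in $\widetilde V$ in one stroke; the convergence of $\int_\Omega q(x)u_n^2\Phi(u_n)\,dx$ then falls out of identity \eqref{eq:identity_2} (or your alternative splitting), and compactness of $\Phi$ follows by reflexivity. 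The paper, curiously, does not invoke the continuity of $L$ at this point: it introduces the linear functionals $B_n(\eta)=\int_\Omega q(x)u_n^2\eta\,dx$ and $B(\eta)=\int_\Omega q(x)u^2\eta\,dx$, proves $B_n\to B$ in operator norm, separately shows $\Phi(u_n)\rightharpoonup\Phi(u)$ weakly in $\widetilde V$ via Riesz representation and $u_n^2\to u^2$ in $L^2(\Omega)$, combines these to obtain the convergence of the integral \emph{first}, and only then upgrades to strong convergence of $\Phi(u_n)$ using the Hilbert-space fact that weak convergence together with convergence of norms implies strong convergence. So the two proofs establish the two assertions of the lemma in opposite order: you get strong convergence of $\Phi(u_n)$ and deduce the integral limit; the paper gets the integral limit and deduces strong convergence. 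Your argument is shorter and makes better use of machinery already established; the paper's version records along the way the operator-norm convergence $B_n \to B$ and the weak sequential continuity of $\Phi$, which are facts of some independent interest but not needed for the statement itself.
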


\begin{proof}
Let $u_n \rightharpoonup u$ in $H_0^1(\Omega)$ and define the operators $B_n, B: \widetilde V \longrightarrow \mathbb R$ by
$$
B_n(\eta) := = \int_{\Omega} q(x)u_n^2\eta  dx, \qquad B(\eta) = := \int_{\Omega} q(x)u^2\eta  dx.
$$
In virtue of the H\"older inequality it is easily seen they are continuous; in fact
\begin{align*}
\envert{\int_{\Omega} q(x) u^2 \eta  dx} &
\leq |q|_\infty |u|_4^2 |\eta|_2 \leq c |\nabla \eta|_2 \leq c \|\eta\|_{\widetilde V}
\end{align*}
where  $c$ just depends on $u$. Analogously it can be verified the continuity of $B_{n}$.

Due to the compact embedding of $H^{1}_{0}(\Omega)$ into $L^{p}(\Omega)$ for $p\in[1,6)$, we get $u_{n}^{2} \to u^{2}$ in $L^{6/5}(\Omega)$. Recall that $H_0^1 (\Omega) \hookrightarrow L^r(\Omega)$ for $r \in [1, 6]$, with compact embedding if $r < 6$. Hence $u_n \rightharpoonup u$ in $L^r(\Omega)$ for all $r \in [1, 6]$, hence $\{u_n\}$, and consequently $\{u_n+u\}$ are bounded. So $u_n^2 \to u^2$ in $L^{6/5} (\Omega)$. Indeed:
\begin{align*}
\int_{\Omega} (u_n^2 - u^2)^{6/5}  dx & = \int_{\Omega} (u_n - u)^{6/5} (u_n + u)^{6/5}  dx \\
 & \leq |(u_n - u)^{6/5}|_2 |(u_n+u)^{6/5}|_2 \\
 & \leq c \int_{\Omega} (u_n - u)^{12/5}  dx \to 0.
\end{align*}
Then
\begin{align*}
|B_n(\eta) - B(\eta)| & \leq |q|_\infty |u_n^2-u^2|_{6/5} |\eta|_6
 \leq c |q|_\infty |u_n^2-u^2|_{6/5} \|\eta\|_{\widetilde V}.
\end{align*}
This implies that
$$
\|B_n - B\| \leq \sup_{\eta \neq 0} \frac{c |u_n^2 - u^2|_{6/5} \|\eta\|_{\widetilde V}}{\|\eta\|_{\widetilde V}} \to 0,
$$
namely $B_n \to B$ as operators in  $\widetilde V$.

On the other hand, we have that $\Phi(u_n) \rightharpoonup \Phi(u)$ in $\widetilde V$. Indeed, let $g \in \widetilde{ V}^{'}$. Then there is some $v_g \in \widetilde V$ such that
$$
( g, \Phi(u_n) )  = \int_{\Omega} \nabla \Phi(u_n)  \nabla v_g  dx + \int_{\Omega} \Delta \Phi(u) \Delta v_g  dx = \int_{\Omega} q(x) u_n^2 v_g  dx.
$$
But then
\begin{align*}
(g,\Phi(u_n)) - (g,\Phi(u)) & = \int_{\Omega} q(x) (u_n^2 - u^2) v_g  dx
 \leq |q|_\infty |u_n^2 - u^2|_2 |v_g|_2 \to 0,
\end{align*}
since $u_n^2 \to u^2$ in $L^2(\Omega)$ as well. We then conclude that
$$
\int_{\Omega} q(x) u_n^2 \Phi(u_n) dx \to \int_{\Omega} q(x) u^2 \Phi(u) dx
$$
and by  \eqref{eq:identity_2} that $\| \Phi(u_{n})\|_{\widetilde V} \to \|\Phi(u)\|_{\widetilde V}$. As a consequence  $\Phi(u_n) \to \Phi(u)$ in $\widetilde V$.
\end{proof}

Note that for every $u \in N$ we have that $F_\varphi'(u, \Phi(u)) = 0$. Indeed, $\Phi(u)$ is the unique solution to the problem in Proposition \ref{prop:auxiiary_problem_to_extend_Phi} with $w = q(x)u^2$.

At this point we are in position to  define the reduced functional of a single variable:
$$
J:H^{1}_{0}(\Omega) \to \mathbb R, \quad u\mapsto F(u, \Phi(u)).
$$
With the notation $\varphi_u  = \Phi(u)$ the functionl $J$ is explicitly given by (recall \eqref{eq:identity_2})
\begin{align*}
J(u) & = \frac12 \int_{\Omega} |\nabla u|^2  dx + \frac12 \int_{\Omega} q(x) (\varphi_u +\chi)u^2  dx - \frac{ 1}{p}
\int_{\Omega}
|u|^p  dx \\
& \quad - \frac{1}{4} \int_{\Omega} (\Delta \varphi_u)^2  dx - \frac{1}{4} \int_{\Omega} |\nabla \varphi_u|^2  dx - \frac{\alpha}{2
|\Omega|}\int_{\Omega}
\varphi_u  dx \\
& =\frac12 \int_{\Omega} |\nabla u|^2  dx
+ \frac{1}{4}\int_{\Omega}
q(x) (\varphi_{u} + \chi) u^2  dx - \frac{ 1}{p} \int_{\Omega} |u|^p  dx.
\end{align*}

The functional  $J$ is of class $C^1$ on $H_0^1 (\Omega)$ and even. Moreover, for every $u \in M$ we have that
$$
J'(u)[v] = F_u'(u, \varphi_u)[v] + F_\varphi'(u, \varphi_u)[\Phi'(u)[v]] = F_u'(u, \varphi_u)[v] \quad \forall v \in H_0^1(\Omega)
$$
and hence we deduce the following variational principle.

\begin{theorem}
The pair $(u, \varphi) \in M \times \widetilde V$ is a critical point of $F$ constrained on $M \times \widetilde V$ if and only if $u$ is a critical point of
$J|_M$
and $\varphi = \Phi(u)$.
\end{theorem}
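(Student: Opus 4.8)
The plan is to prove the equivalence by unwinding the definition of a constrained critical point on the product manifold $M \times \widetilde V$, exploiting the fact that $\widetilde V$ is a linear subspace so that its tangent space at every point is $\widetilde V$ itself. Consequently the tangent space to $M \times \widetilde V$ at $(u,\varphi)$ splits as $T_u M \times \widetilde V$, and $(u,\varphi)$ being a constrained critical point means precisely the two conditions $F_u'(u,\varphi)[v] = 0$ for all $v \in T_u M$ together with $F_\varphi'(u,\varphi)[\xi] = 0$ for all $\xi \in \widetilde V$.

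For the forward implication I would start from the second condition. Since $F_\varphi'(u,\varphi)$ vanishes on all of $\widetilde V$, the function $\varphi$ is a weak solution of the auxiliary problem of Proposition \ref{prop:auxiiary_problem_to_extend_Phi} with datum $w = q(x)u^2$; because $u \in M \subset N$ this datum satisfies the compatibility condition, so the uniqueness part of Proposition \ref{prop:auxiiary_problem_to_extend_Phi} forces $\varphi = \Phi(u)$. It then remains to turn the first condition into criticality of $J|_M$. Here I would invoke the reduction identity $J'(u)[v] = F_u'(u,\Phi(u))[v]$, valid for every $u \in M$ because the chain-rule term $F_\varphi'(u,\Phi(u))[\Phi'(u)[v]]$ drops out: the differential $\Phi'(u)[v]$ lies in $\widetilde V$, on which $F_\varphi'(u,\Phi(u))$ already vanishes since $u \in N$. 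Substituting $\varphi = \Phi(u)$ into the first condition thus yields $J'(u)[v] = 0$ for all $v \in T_u M$, i.e. $u$ is a critical point of $J|_M$.

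For the converse I would reverse these steps. Assuming $\varphi = \Phi(u)$ and that $u$ is a critical point of $J|_M$, the same reduction identity gives $F_u'(u,\varphi)[v] = J'(u)[v] = 0$ for every $v \in T_u M$, which is the first defining condition. The second, $F_\varphi'(u,\varphi)[\xi] = 0$ for all $\xi \in \widetilde V$, is immediate from the observation recorded just before the statement: since $u \in M \subset N$, the function $\Phi(u)$ solves the auxiliary problem and hence $F_\varphi'(u,\Phi(u)) = 0$ on $\widetilde V$. Both conditions together say exactly that $(u,\varphi)$ is a constrained critical point of $F$ on $M \times \widetilde V$.

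The substitutions are routine, and the only point requiring care is the justification of the reduction identity on the whole of $H_0^1(\Omega)$: it hinges on $u$ belonging to $M$ (so that $u \in N$ and the extra term in the chain rule vanishes), not merely on $\varphi$ being a generic element of $\widetilde V$. This is precisely where the extension of $\Phi$ from $N$ to all of $H_0^1(\Omega)$ via Proposition \ref{prop:auxiiary_problem_to_extend_Phi} pays off, since it makes $\Phi$ genuinely $C^1$ and legitimizes the differentiation $J'(u) = F_u'(u,\Phi(u)) + F_\varphi'(u,\Phi(u))\circ \Phi'(u)$. I would also be careful to test the first defining condition against $T_u M$ rather than all of $H_0^1(\Omega)$, so that the identification with the Lagrange-multiplier formulation of the preceding theorem is preserved and no constraint is spuriously dropped.
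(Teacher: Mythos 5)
Your proof is correct and takes essentially the same approach as the paper, which deduces the theorem from exactly the same ingredients: the splitting of the tangent space as $T_uM\times\widetilde V$, the identification $\varphi=\Phi(u)$ through the weak formulation on $\widetilde V$, and the reduction identity $J'(u)=F_u'(u,\Phi(u))$ stated immediately before the theorem. The only (harmless) imprecision is that $F_\varphi'(u,\Phi(u))$ vanishes on $\widetilde V$ for \emph{every} $u\in H_0^1(\Omega)$ by the very definition of $\Phi$ via Proposition \ref{prop:auxiiary_problem_to_extend_Phi} (whose solvability requires no compatibility condition, thanks to the correction term $\overline w$); membership $u\in N$ is needed only to make $F_\varphi'(u,\Phi(u))$ vanish against constants, i.e.\ on all of $V$, so your appeals to $u\in M\subset N$ at those two points are sufficient but not necessary.
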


\subsection{Proof of the main result} \label{sec:final}

We recall the following useful result.

\begin{lemma}[\!\!{\cite[Lemma 3.1]{PisaniSiciliano2007}}] \label{lem:ps}
Let $D$ be a regular domain of $\mathbb R^N$ and
\begin{equation*}
\label{eq:lemma_sobolev_spaces_1}
1 \leq s \leq N,
\end{equation*}
\begin{equation*}
\label{eq:lemma_sobolev_spaces_2}
s < p < s^* = \frac{Ns}{N - s}
\end{equation*}
and
\begin{equation*}
\label{eq:lemma_sobolev_spaces_3}
0 < r \leq N \left(1 - \frac{p}{s^*} \right).
\end{equation*}
Then there exists a constant $C > 0$ such that for every $u \in W^{1, s}(D)$ it holds that
\begin{equation*}
\label{eq:lemma_sobolev_spaces_conclusion}
|u|_p^p \leq C \|u\|_{W^{1, s}}^{p - r}|u|_s^r.
\end{equation*}
\end{lemma}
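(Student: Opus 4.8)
The plan is to reduce this estimate to the classical Sobolev embedding combined with an elementary interpolation of $L^p$ norms, treating first the endpoint value $r_0 := N(1 - p/s^*)$ and then deducing the whole range $0 < r \le r_0$ by a trivial monotonicity trick. I will carry out the argument in the main case $s < N$, where $s^* = Ns/(N-s)$ is finite; this is the only case relevant to our application, where $N = 3$ and $s = 2$.

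First I would fix $s < p < s^*$ and interpolate between $L^s(D)$ and $L^{s^*}(D)$. By H\"older's inequality there is a unique $\lambda \in (0,1)$ determined by $\frac{1}{p} = \frac{\lambda}{s} + \frac{1 - \lambda}{s^*}$, and for it one has $|u|_p \le |u|_s^{\lambda}\,|u|_{s^*}^{1 - \lambda}$. Solving for $\lambda$ using the identity $\frac{1}{s} - \frac{1}{s^*} = \frac{1}{N}$ yields $\lambda = \frac{N}{p} - \frac{N}{s} + 1$, whence $\lambda p = p + N\bigl(1 - \frac{p}{s}\bigr) = N\bigl(1 - \frac{p}{s^*}\bigr) = r_0$. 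Next I would invoke the Sobolev embedding $W^{1, s}(D) \hookrightarrow L^{s^*}(D)$, valid since $D$ is regular and $s < N$, to bound $|u|_{s^*} \le C \|u\|_{W^{1, s}}$. Substituting this into the interpolation estimate and raising to the power $p$ gives the endpoint inequality
\begin{equation*}
|u|_p^p \le C\,|u|_s^{\lambda p}\,\|u\|_{W^{1, s}}^{(1 - \lambda)p} = C\,|u|_s^{r_0}\,\|u\|_{W^{1, s}}^{p - r_0},
\end{equation*}
which is precisely the claim for $r = r_0$.

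Finally, for a general $0 < r \le r_0$ I would exploit that the $W^{1, s}$ norm dominates the $L^s$ norm, i.e. $|u|_s \le \|u\|_{W^{1, s}}$. Writing $|u|_s^{r_0} = |u|_s^{r}\,|u|_s^{r_0 - r} \le |u|_s^{r}\,\|u\|_{W^{1, s}}^{r_0 - r}$, which is legitimate because $r_0 - r \ge 0$, and inserting this into the endpoint inequality produces $|u|_p^p \le C\,\|u\|_{W^{1, s}}^{(p - r_0) + (r_0 - r)}\,|u|_s^{r} = C\,\|u\|_{W^{1, s}}^{p - r}\,|u|_s^{r}$, as desired.

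I do not expect a serious obstacle in this scheme. The only delicate point is the bookkeeping of the exponents, in particular verifying that the interpolation parameter $\lambda$ produced by H\"older coincides exactly with the prescribed endpoint $r_0/p$; the computation above shows this is automatic. The genuine borderline regime is $s = N$, where $s^*$ is infinite: there one must replace $s^*$ by an arbitrarily large finite exponent $q$ (using $W^{1, N}(D) \hookrightarrow L^q(D)$ for all finite $q$), which still delivers the inequality for every $r < r_0 = N$, the endpoint $r = N$ being recovered by a limiting argument. Since this case does not occur in our setting, I would simply state the result for $s < N$ and remark on the modification.
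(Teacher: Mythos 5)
The paper offers no proof of this lemma: it is quoted verbatim from \cite[Lemma 3.1]{PisaniSiciliano2007}, so there is no internal argument to compare against. Your proof is correct for $s<N$ and is the standard (and, in the cited source, essentially the same) argument: H\"older interpolation $|u|_p\le |u|_s^{\lambda}|u|_{s^*}^{1-\lambda}$ with $\frac1p=\frac{\lambda}{s}+\frac{1-\lambda}{s^*}$, the identity $\frac1s-\frac1{s^*}=\frac1N$ giving exactly $\lambda p=N\bigl(1-\frac{p}{s^*}\bigr)=r_0$, the Sobolev embedding $W^{1,s}(D)\hookrightarrow L^{s^*}(D)$ on a regular domain, and the trivial bound $|u|_s\le\|u\|_{W^{1,s}}$ to descend from the endpoint $r_0$ to every $0<r\le r_0$; the exponent bookkeeping all checks out. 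The one soft spot is your closing remark on $s=N$: the endpoint $r=N$ cannot be ``recovered by a limiting argument'' in the naive sense, because the constant in $W^{1,N}(D)\hookrightarrow L^q(D)$ blows up as $q\to\infty$ (one would instead invoke a Gagliardo--Nirenberg inequality); but since the lemma's statement itself presupposes $s^*=\frac{Ns}{N-s}$ finite and the application uses $N=3$, $s=2$, this caveat is immaterial to the result as used in the paper.
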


\begin{remark}
If $D$ is bounded, then the conclusion of the lemma is true also in the case $p \in [1, s]$ with $r < p$. Furthermore, if $D$ is bounded and $u \in W_0^{1,p} (D)$, then, by Poincar\'e inequality,
$$
|u|_p^p \leq C |\nabla u|_s^{p - r} |u|_s^r.
$$
\end{remark}

The following lemma gives the existence of solutions to our modified problem.

\begin{lemma}\label{lem:minimo}
\label{lem:coercivity_and_weak_lower_semicontinuity}
The functional $J$ on $M$ is weakly lower semicontinuous and coercive. In particular, it has a  minimizer $u \in M$
and it can be assumed positive.
\end{lemma}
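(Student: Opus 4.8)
The plan is to establish the two asserted properties---coercivity and weak lower semicontinuity---separately, and then to conclude by the direct method of the calculus of variations on the weakly closed set $M$. Recall that on $M$ the reduced functional reads
\[
J(u) = \frac12 \int_\Omega |\nabla u|^2 \, dx + \frac14 \int_\Omega q(x)(\varphi_u + \chi) u^2 \, dx - \frac1p \int_\Omega |u|^p \, dx, \qquad \varphi_u = \Phi(u),
\]
and that $|u|_2 = 1$ for $u \in M$. The structural observation that tames the otherwise awkward nonlocal coupling term is identity \eqref{eq:identity_2}, which gives $\int_\Omega q(x)\varphi_u u^2 \, dx = \|\Phi(u)\|_{\widetilde V}^2 \ge 0$; hence the coupling contributes a nonnegative square to $J$.

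For coercivity I would split the middle term as $\frac14\int_\Omega q\varphi_u u^2 \, dx + \frac14\int_\Omega q\chi u^2 \, dx$. By the observation above the first piece equals $\frac14\|\Phi(u)\|_{\widetilde V}^2 \ge 0$, while the second is bounded in absolute value by $\frac14 |q\chi|_\infty |u|_2^2 = \frac14|q\chi|_\infty$, since $\chi$ is a fixed function (bounded, by elliptic regularity for the auxiliary problem) and $|u|_2=1$ on $M$. The nonlinear term is controlled via the Gagliardo--Nirenberg inequality of Lemma \ref{lem:ps}, in the form of the subsequent Remark, applied with $N=3$, $s=2$, $s^*=6$ and $u\in H_0^1(\Omega)$: this yields $|u|_p^p \le C|\nabla u|_2^{p-r}|u|_2^{r} = C|\nabla u|_2^{p-r}$ on $M$, where $r$ is any exponent with $p-2 < r \le 3-p/2$. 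Such an $r$ exists precisely because $p < 10/3$---this is exactly where the $L^2$-subcritical threshold enters---and then $p-r < 2$. Consequently
\[
J(u) \ge \frac12 |\nabla u|_2^2 - \frac{C}{p}|\nabla u|_2^{\,p-r} - \frac14 |q\chi|_\infty,
\]
and since $p-r<2$ the quadratic term dominates, giving $J(u)\to+\infty$ as $|\nabla u|_2\to+\infty$.

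For weak lower semicontinuity let $u_n \rightharpoonup u$ in $H_0^1(\Omega)$. The Dirichlet term is weakly lower semicontinuous, so $\liminf_n \tfrac12|\nabla u_n|_2^2 \ge \tfrac12|\nabla u|_2^2$. The coupling term $\frac14\int_\Omega q\, u_n^2\Phi(u_n) \, dx$ converges to $\frac14\int_\Omega q\, u^2\Phi(u) \, dx$ directly by Lemma \ref{lem:compactness_of_Phi}. Finally, by the compactness of the embeddings $H_0^1(\Omega)\hookrightarrow L^2(\Omega)$ and $H_0^1(\Omega)\hookrightarrow L^p(\Omega)$ (here $p<6$) one has $u_n^2\to u^2$ in $L^1(\Omega)$ and $u_n\to u$ in $L^p(\Omega)$, so both $\frac14\int_\Omega q\chi u_n^2 \, dx$ (using $q\chi\in L^\infty$) and $\frac1p|u_n|_p^p$ converge to their values at $u$. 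Summing, $\liminf_n J(u_n)\ge J(u)$.

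With these two properties the conclusion is routine: a minimizing sequence $\{u_n\}\subset M$ is bounded in $H_0^1(\Omega)$ by coercivity, hence admits a weakly convergent subsequence $u_n\rightharpoonup u$; since $M$ is weakly closed, $u\in M$, and weak lower semicontinuity gives $J(u)\le\liminf_n J(u_n)=\inf_M J$, so $u$ is a minimizer. For the positivity, $|u|\in H_0^1(\Omega)$ with $|\nabla|u||=|\nabla u|$ a.e., so $|u|\in M$, and since $\Phi$ depends on its argument only through $u^2$ we have $\Phi(|u|)=\Phi(u)$ and therefore $J(|u|)=J(u)$; thus the minimizer may be taken nonnegative. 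The only genuinely delicate point is the behaviour of the nonlocal coupling term, but its favourable sign is already supplied by \eqref{eq:identity_2} and its passage to the weak limit by Lemma \ref{lem:compactness_of_Phi}, so the remaining effort reduces to the Gagliardo--Nirenberg balancing that crucially exploits $p<10/3$.
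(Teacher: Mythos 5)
Your proposal is correct and takes essentially the same approach as the paper's proof: nonnegativity of the nonlocal term via \eqref{eq:identity_2}, the Gagliardo--Nirenberg estimate of Lemma \ref{lem:ps} with the choice $p-2<r\le 3-p/2$ (possible exactly because $p<10/3$) for coercivity, Lemma \ref{lem:compactness_of_Phi} together with the compact embeddings for weak lower semicontinuity, weak closedness of $M$ for the direct method, and $J(|u|)=J(u)$ for positivity. The only differences are cosmetic: you track the constant in front of the $\int_\Omega q\chi u^2\,dx$ term explicitly and handle the $-\frac1p|u|_p^p$ term by compactness of $H_0^1(\Omega)\hookrightarrow L^p(\Omega)$, which is cleaner than the paper's passing remark that this term is weakly lower semicontinuous.
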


\begin{proof}
We have
\begin{align*}
J(u) & = \frac12 \int_{\Omega} |\nabla u|^2  dx + \frac14 \int_{\Omega} (\Delta \varphi_u)^2 dx + \frac14 \int_{\Omega} |\nabla \varphi_u|^2 dx
 + \int_{\Omega} q(x) \chi u^2  dx - \frac{ 1}{p} \int_{\Omega} |u|^p dx\\
& \geq  \frac12 \int_{\Omega} |\nabla u|^2  dx - |q\chi|_{\infty}
- \frac{ 1}{p} \int_{\Omega} |u|^p dx.
\end{align*}
Finally, we apply Lemma \ref{lem:ps} with $s = 2$ and $N = 3$. Since $p \in (2, 10/3)$ it holds that
$$
p - 2 < 3 \left(1 - \frac p6 \right) < 2
$$
and we can choose
$$
p - 2 < r < 3 \left(1 - \frac p6 \right),
$$
so that by  Lemma \ref{lem:ps} it follows that
$$
\frac{1 }{p} \int_{\Omega}|u|^p  dx \leq c|\nabla u|_2^{p - r}.
$$
Hence,
$$
J(u) \geq \frac12 \int_{\Omega} |\nabla u|^2  dx - |q\chi|_{\infty}
- c'|\nabla u|_2^{p - r}
$$
and thus $J$ is coercive and bounded from below on $M$.

Now, let $\{u_n\} \subset M$ such that $u_{n} \rightharpoonup u$. Since $M$ is weakly closed, $u \in M$. By Lemma~\ref{lem:compactness_of_Phi} we know that
$$
\frac14 \int_{\Omega} (\Delta \varphi_{u_n})^2  dx + \frac14 \int_{\Omega} |\nabla \varphi_{u_n}|^2  dx \to \frac14 \int_{\Omega} (\Delta \varphi_u)^2
dx + \frac14 \int_{\Omega} |\nabla \varphi_u|^2  dx.
$$
We also know that $u_n^2 \to u^2$ in $L^{6/5}(\Omega)$ so
\begin{align*}
\int_{\Omega} q(x) \chi (u_n^2 - u^2)  dx & \leq c \int_{\Omega} |u_n^2 - u^2 | dx
 \leq c |u_n - u|_{6/5} \to 0.
\end{align*}
Finally, the first and last terms are the norms of $u$ in $H_0^1(\Omega)$ and $L^p (\Omega)$ (up to constants), so they are weakly lower semicontinuous.

Thus $J$ is weakly lower semicontinuous and the  existence of the minimum follows by standard results. Note that $J(u) = J(|u|)$, the  minimum can be assumed to be positive.
\end{proof}

For the existence of other critical points beside the minimum, we need the following.

\begin{proposition}
\label{prop:Palais_Smale}
The functional $J$ satisfies the Palais-Smale condition on $M$.
\end{proposition}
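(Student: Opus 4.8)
The plan is to follow the standard scheme for constrained Palais--Smale sequences, the only genuine novelty being the presence of \emph{two} constraints, which forces a careful control of a \emph{pair} of Lagrange multipliers. Let $\{u_n\}\subset M$ be a Palais--Smale sequence, so that $\{J(u_n)\}$ is bounded and $J'(u_n)\to 0$ in $(\mathrm T_{u_n}M)'$. First I would invoke the coercivity of $J$ on $M$ proved in Lemma~\ref{lem:minimo}: since $\{J(u_n)\}$ is bounded, $\{u_n\}$ is bounded in $H_0^1(\Omega)$. Hence, up to a subsequence, $u_n\rightharpoonup u$ in $H_0^1(\Omega)$ and, by the compact embedding $H_0^1(\Omega)\hookrightarrow L^r(\Omega)$ for $1\le r<6$, $u_n\to u$ strongly in every such $L^r(\Omega)$; since $M$ is weakly closed we have $u\in M$, and in particular $u\neq 0$. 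Writing the Palais--Smale condition through the Lagrange multiplier rule, there are $\omega_n,\mu_n\in\mathbb R$ such that
\begin{equation}\label{eq:PSmult}
J'(u_n)[v]=\omega_n\int_\Omega u_n v\,dx-\mu_n\int_\Omega q(x)u_n v\,dx+\langle\varepsilon_n,v\rangle,\qquad\forall v\in H_0^1(\Omega),
\end{equation}
where $\varepsilon_n\to 0$ in $H^{-1}(\Omega)$.

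The main obstacle, and the step where the hypotheses of Theorem~\ref{thm:main_theorem} really enter, is the boundedness of the two multipliers $\{\omega_n\}$ and $\{\mu_n\}$. My argument would be as follows. Since $u_n\to u$ in $L^2(\Omega)$ and $q\in C(\overline\Omega)$, the constraint differentials converge strongly in $H^{-1}(\Omega)$, namely $G_1'(u_n)\to G_1'(u)$ and $G_2'(u_n)\to G_2'(u)$; indeed $\|G_1'(u_n)-G_1'(u)\|_{H^{-1}}\le c\,|u_n-u|_2\to0$, and likewise for $G_2'$ with the extra factor $|q|_\infty$. The limiting differentials $G_1'(u)$ and $G_2'(u)$ are linearly independent by the linear independence result established above, where precisely $u\in M$ and $|q^{-1}(\alpha)|=0$ are used. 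Consequently the $2\times2$ Gram matrix of $\{G_1'(u_n),G_2'(u_n)\}$ converges to the Gram matrix of a linearly independent pair, hence its determinant is bounded away from $0$ for $n$ large: this uniform linear independence allows one to invert the multiplier system coming from \eqref{eq:PSmult}. Since $\{u_n\}$ is bounded in $H_0^1(\Omega)$, the explicit form of $J'$ together with \eqref{eq:boundedness_of_Phi} (giving $\varphi_{u_n}$ bounded in $\widetilde V\hookrightarrow L^\infty(\Omega)$) shows that $\|J'(u_n)\|_{H^{-1}}$ is bounded, and then $\{\omega_n\}$ and $\{\mu_n\}$ are bounded. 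Passing to a further subsequence, $\omega_n\to\omega$ and $\mu_n\to\mu$.

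Finally I would upgrade weak to strong convergence. Setting $\varphi_{u_n}=\Phi(u_n)$ and recalling the expression of $J'$, identity \eqref{eq:PSmult} reads
\begin{equation*}
-\Delta u_n=\omega_n u_n-\mu_n q(x)u_n-q(x)(\varphi_{u_n}+\chi)u_n+|u_n|^{p-2}u_n+\varepsilon_n=:f_n\quad\text{in }H^{-1}(\Omega).
\end{equation*}
I claim that $f_n$ converges strongly in $H^{-1}(\Omega)$. Indeed $\omega_n u_n\to\omega u$ and $\mu_n q u_n\to\mu q u$ in $L^2(\Omega)$; by the compactness of $\Phi$ (Lemma~\ref{lem:compactness_of_Phi}) one has $\varphi_{u_n}\to\varphi_u$ in $\widetilde V\hookrightarrow L^\infty(\Omega)$, whence $q(\varphi_{u_n}+\chi)u_n\to q(\varphi_u+\chi)u$ in $L^2(\Omega)$; the continuity of the Nemytsky map gives $|u_n|^{p-2}u_n\to|u|^{p-2}u$ in $L^{p/(p-1)}(\Omega)$; and $\varepsilon_n\to0$. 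Through the continuous embeddings $L^2(\Omega),L^{p/(p-1)}(\Omega)\hookrightarrow H^{-1}(\Omega)$ all these terms converge strongly in $H^{-1}(\Omega)$, so $f_n\to f$ in $H^{-1}(\Omega)$. Applying the isomorphism $(-\Delta)^{-1}:H^{-1}(\Omega)\to H_0^1(\Omega)$ to $u_n=(-\Delta)^{-1}f_n$ yields $u_n\to(-\Delta)^{-1}f$ strongly in $H_0^1(\Omega)$, and the limit is the element $u\in M$ found above. This produces the convergent subsequence and establishes the Palais--Smale condition.
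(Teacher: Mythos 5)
Your proof is correct, and it reaches the same two milestones as the paper's argument --- boundedness of the pair of Lagrange multipliers, then upgrading weak to strong convergence --- but by a genuinely different mechanism in the crucial step. Where you bound the multipliers abstractly, via strong $H^{-1}$-convergence of the constraint differentials $G_1'(u_n)\to G_1'(u)$, $G_2'(u_n)\to G_2'(u)$ and uniform invertibility of the associated Gram matrix (linear independence at the limit being supplied by the proposition that uses $|q^{-1}(\alpha)|=0$), the paper proceeds concretely in two steps: it first tests the multiplier identity with $u_n$ itself, which only controls the combination $\lambda_n+\alpha\beta_n$ (because $\int_\Omega u_n^2\,dx=1$ and $\int_\Omega q(x)u_n^2\,dx=\alpha$ on $M$), and then exploits the fact that $(q-\alpha)u\not\equiv 0$ --- the same non-degeneracy, coming from $|u|_2=1$ and $|q^{-1}(\alpha)|=0$ --- to pick a single test function $w\in C_0^\infty(\Omega)$ with $\int_\Omega (q(x)-\alpha)uw\,dx\neq 0$; testing against $w$ bounds $\beta_n$, and hence $\lambda_n$. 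The two arguments hinge on exactly the same hypothesis, but yours is more systematic (it would work verbatim for any finite number of constraints whose differentials are linearly independent at the weak limit), at the price of the Gram-matrix formalism, while the paper's is more elementary and self-contained. The final steps differ only cosmetically: you convert strong $H^{-1}$-convergence of the right-hand side into $H_0^1$-convergence via the isomorphism $(-\Delta)^{-1}$, whereas the paper applies the equation to $u_n-u$ and concludes $|\nabla u_n|_2\to|\nabla u|_2$; both are standard. One small point you should make explicit: to invert the $2\times 2$ system you need, besides $\det$ of the Gram matrix bounded away from zero, that its entries (and $\|J'(u_n)\|_{H^{-1}}$) stay bounded, so that the inverse matrix is bounded; this follows from the boundedness of $\{u_n\}$ in $H^1_0(\Omega)$, of $\{\varphi_{u_n}\}$ in $\widetilde V\hookrightarrow L^\infty(\Omega)$ via \eqref{eq:boundedness_of_Phi}, and of $\{|u_n|^{p-2}u_n\}$ in $L^{p/(p-1)}(\Omega)$, but it deserves a sentence.
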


\begin{proof}
Let $\{u_n\} \subset M$ be a sequence such that
$$
\{J(u_n)\} \ \text{ is bounded }
$$
and
\begin{equation}
\label{eq:derivative_goes_to_zero}
J|_M'(u_n) \to 0.
\end{equation}
By \eqref{eq:derivative_goes_to_zero}  there exist two sequences of real numbers $\{\lambda_n\}, \{\beta_n \} $
and a sequence $\{v_{n}\} \subset H^{-1}$ such that $v_n \to 0$ and
\begin{equation}
\label{eq:derivative_goes_to_zero_meaning}
- \Delta u_n + q(x)(\varphi_n + \chi)u_n -   |u_n|^{p - 2}u_n = \lambda_n u_n + \beta_n q(x) u_n + v_n
\end{equation}
where $\varphi_n := \varphi_{u_n}$.

Since $J$ is coercive and $\{J(u_n)\}$ is bounded, we deduce that $\{u_n\}$ is bounded in $H_0^1 (\Omega)$. Hence there exists $u \in H_0^1(\Omega)$ such that $u_n \rightharpoonup u$, up to a subsequence. By the compact embeddings and Lemma \ref{lem:compactness_of_Phi} we know that
\begin{equation}\label{eq:conv}
u_{n}\to u \quad\text{in } \  L^{p}(\Omega), \quad
\varphi_n \to \varphi_{u}\quad  \text{ in } \  H^{2}(\Omega).
\end{equation}
Moreover by the compact embedding of $H^{1}_{0}(\Omega)$ into $L^{2}(\Omega)$, $M$ is weakly closed, and then $u \in M$. It only remains to show that $u_n \to u$ in $H_0^1 (\Omega)$.

By \eqref{eq:derivative_goes_to_zero_meaning} we have that
\begin{equation}
\label{eq:derivative_goes_to_zero_applied_to_un}
\frac12 \int_{\Omega} |\nabla u_n|^2  dx + \frac12 \int_{\Omega} q(x)(\varphi_n + \chi)u_n^2  dx - \frac{1 }{p} \int_{\Omega} |u_n|^p  dx - \langle v_n, u_n
\rangle
= \lambda_n + \alpha \beta_n.
\end{equation}
Now the left hand side above is bounded. We just check the second term: by \eqref{eq:conv} we infer
\begin{align*}
    \left| \int_{\Omega} \left(q(x)(\varphi_n + \chi)u_n^2 -  q(\varphi_u + \chi)u^2  \right) dx\right|
    &\le c \int_{\Omega} \left| \varphi_{n}+\chi \right| |u_{n}^{2} - u^{2}|  dx
        +\int_{\Omega} u^{2} \left| \varphi_{n} - \varphi \right|  dx \\
    &= o_{n}(1),
\end{align*}
where we are denoting with $o_{n}(1)$ a vanishing sequence. Then the right-hand side of \eqref{eq:derivative_goes_to_zero_applied_to_un} is bounded and we can assume that
$$
\lambda_n + \alpha \beta_n =\xi + o_{n}(1)
$$
with $\xi \in \mathbb R$.

Then \eqref{eq:derivative_goes_to_zero_meaning} becomes
\begin{equation}
\label{eq:derivative_goes_to_zero_becomes}
-\Delta u_n + q(x)(\varphi_n + \chi)u_n -   |u_n|^{p - 2}u_n - v_n =(\xi + o_{n}(1))u_n - \beta_n(q(x) - \alpha)u_n.
\end{equation}

Now, since $u \in M$ we know that $|u|_2 = 1$. This, together with the assumption $|q^{-1}(\alpha)| = 0$ implies that $(q(x) - \alpha)u$ is not identically zero. Then we can choose a test function $w \in C^{\infty}_{0}(\Omega)$ such that
$$
\int_{\Omega} (q(x) - \alpha)u w  dx \neq 0.
$$
Multiplying \eqref{eq:derivative_goes_to_zero_becomes} by $w$ and integrating  by parts we get
\begin{multline}\label{eq:beta}
\int_{\Omega} \nabla u_n  \nabla w  dx + \int_{\Omega} q(x)(\varphi_n + \chi)u_n w  dx -  \int_{\Omega}|u_n|^{p - 2}u_n w  dx \\ - \langle v_n, w \rangle -
(\lambda + o_{n}(1))\int_{\Omega} u_n w  dx
= \beta_n \int_{\Omega} (q(x) - \alpha)u_n w  dx
\end{multline}
and using again \eqref{eq:conv} we see that every term in the left-hand side converges. Also, by the weak convergence of $\{u_n\}$,
$$
\int_{\Omega} (q(x) - \alpha)u_n w  dx \to \int_{\Omega} (q(x) - \alpha)u w  dx.
$$
This implies, coming back to \eqref{eq:beta}, that $\{\beta_n\}$ is bounded, which in turn implies that $\{\lambda_n\}$ is bounded as well.

Applying \eqref{eq:derivative_goes_to_zero_becomes} to $u_n - u$ we get
\begin{multline}\label{eq:ultimaconv}
\int_{\Omega} \nabla u_n \nabla(u_n - u)  dx + \int_{\Omega} q(x)(\varphi_n + \chi)u_n(u_n - u)  dx -   \int_{\Omega} |u_n|^{p - 2}u_n (u_n - u)dx\\
- \langle v_n, u_n - u \rangle = (\lambda + o(1))\int_{\Omega} u_n (u_n - u)  dx + \beta_n \int_{\Omega} (q(x) - \alpha)u_n(u_n - u)  dx.
\end{multline}
Since (again by \eqref{eq:conv}) we have
\begin{align*}
& \int_{\Omega} q(x)(\varphi_n + \chi)u_n(u_n - u)  dx \to 0, \quad \langle v_n, u_n - u \rangle \to 0, \\
& \int_{\Omega}|u_n|^{p - 2} u_n(u_n - u)  dx \to 0, \quad (\lambda + o(1))\int_{\Omega} u_n (u_n - u)  dx \to 0
\end{align*}
and
$$
\beta_n \int_{\Omega} (q(x) - \alpha)u_n(u_n - u)  dx \to 0,
$$
we conclude by \eqref{eq:ultimaconv} that $|\nabla u_n|_2 \to |\nabla u|_2$ and so $u_n \to u$ in $H_0^1(\Omega)$.
\end{proof}

Now we can give the proof of Theorem \ref{thm:main_theorem}. By Proposition \ref{thm:existence_of_sets_of_genus_k}, $M$ has compact, symmetric subsets of genus $k$ for every $k \in \mathbb N$. Let $n \in \mathbb N$. By Lemma \ref{lem:sublevels_have_finite_genus} there exists some $k \in \mathbb N$ depending on $n$ such that
$$
\gamma(J^n) = k.
$$
Let
$$
A_{k+1} = \left\{A \subset M \ : \ A = -A, \overline A = A, \gamma(A) = k + 1 \right\}
$$
that we know is not empty by Proposition \ref{thm:existence_of_sets_of_genus_k}.

By the monotonicity property of the genus, any $A \in A_{k+1}$ is not contained in $J^n$, then $\sup_A J > n$ and therefore
$$
c_n = \inf_{A \in A_{k+1}} \sup_{u \in A} J(u) \geq n.
$$
It is well known (see e.g., Szulkin \cite{Szulkin1988}) that $c_{n}$ are critical levels for $J|_{M}$ and then there is a sequence $\{u_{n}\}$ of critical points such that
\begin{equation}\label{eq:Jdiverge}
J(u_{n})=\frac12  |\nabla u_{n}|_{2}^2
+ \frac{1}{4}\int_{\Omega}
q(x) (\varphi_{n} + \chi) u_{n}^2  dx - \frac{ 1}{p}  |u_{n}|_{p}^p  dx=
c_{n}\to +\infty.
\end{equation}

The critical points give rise to Lagrange multipliers $\omega_n, \mu_n$ and then, recalling the decomposition $\varphi = \phi - \chi - \mu$, to solutions $(u_n, \omega_n, \phi_n) \in H_0^1(\Omega) \times \mathbb R \times H^2 (\Omega)$ of the original problem.

We show that it is also $\| u_n\|\to + \infty$, meaning that the solutions are more and more ``oscillating'' in some sense. Since
$$
\int_{\Omega} q(x) \chi u_n^2  dx \leq |q \chi|_\infty,
$$
and by \eqref{eq:boundedness_of_Phi} it is
$$
\|\varphi_n\|_{\widetilde V} = \int_{\Omega} (\Delta \varphi_n)^2  dx + \int_{\Omega} |\nabla \varphi_n|^2  dx \leq c |\nabla u_{n}|_2^2,
$$
we infer that
$$
|J(u_n)| \leq (1+c) |\nabla u_n|_{2}^2 + c' |\nabla u_n|_2^{p } + |q \chi|_\infty
$$
and then $\{u_{n}\}$ can not be  bounded in $H^{1}_{0}(\Omega)$.

Note also that  from $J'(u_{n})= \omega_{n} u_{n} - \mu_{n} q(x)u_{n}$ we get
\begin{equation}
|\nabla u_{n}|_{2}^{2} +\int_{\Omega} q(x)(\varphi_{n} + \chi)u_{n}^{2}dx -|u_{n}|_{p}^{p} = \omega_{n}-\mu_{n}\alpha,
\end{equation}
that joint with \eqref{eq:Jdiverge} gives
\[
	+\infty\leftarrow c_{n}<|\nabla u_{n}|_{2}^{2} +\int_{\Omega}q(x)(\varphi_{n}+ \chi)u_{n}^{2}dx -|u_{n}|_{p}^{p}=\omega_{n}-\mu_{n}\alpha.
\]
In this way  the proof of Theorem \ref{thm:main_theorem} is concluded.

\subsection*{Acknowledgment}

G. Siciliano was supported by Capes, CNPq, FAPDF Edital 04/2021 - Demanda Espont\^anea, Fapesp grants no. 2022/16407-1, 2022/16097-2, CEPID 24/00923-6 (Brazil), PRIN PNRR, P2022YFAJH ``Linear and Nonlinear PDEs: New directions and applications'', and INdAM-GNAMPA project n. E5324001950001 ``Critical and limiting phenomena in nonlinear elliptic systems'' (Italy). The author would like to thank the referee for the careful reading of the paper and for the suggestions that certainly improved the reading of the paper.

{\small

}

\EditInfo{April 15, 2025}{September 24, 2025}{Jaqueline Godoy Mesquita, Mariel Sáez Trumper, Rafael Potrie and Tiago Macedo}

\end{document}